\documentclass[11pt,a4paper,reqno]{amsart}
\usepackage[T1]{fontenc}
\usepackage[utf8]{inputenc}
\usepackage{lmodern}
\usepackage[a4paper,margin=30mm]{geometry}
\usepackage{amsmath,amssymb,amsthm,mathtools}
\usepackage{microtype}
\usepackage[unicode,hidelinks]{hyperref}
\usepackage{bookmark}
\hypersetup{
  pdftitle={The modulo 9 Kanade--Russell identities and their Nahm-sum duals},
  pdfauthor={Ernest X. W. Xia},
  pdfsubject={Rogers--Ramanujan type identities, generalized Nahm sums, and duality},
  pdfkeywords={Kanade--Russell identities, Rogers--Ramanujan identities, Nahm sums, duality, divided differences, residues}
}

\newtheorem{theorem}{Theorem}[section]
\newtheorem{lemma}[theorem]{Lemma}
\theoremstyle{remark}
\newtheorem*{remark}{Remark}
\numberwithin{equation}{section}
\allowdisplaybreaks[2]
\title[Modulo 9 Kanade--Russell identities and Nahm-sum duals]
{The modulo 9 Kanade--Russell identities and their Nahm-sum duals}
\author{Ernest X. W. Xia}
\address{School of Mathematical Sciences, Suzhou University of Science and Technology, Suzhou 215009, Jiangsu, P. R. China}
\email{ernestxwxia@163.com}
\subjclass[2020]{Primary 11P84; Secondary 33D15, 11F03, 05A17}
\keywords{Kanade--Russell identities, Rogers--Ramanujan type identities, generalized Nahm sums, duality, divided differences, residue certificates}

\begin{document}
	
	\begin{abstract}
		Kanade and Russell initiated a family of conjectural
		Rogers--Ramanujan type identities of moduli $9$ and $12$, which
		ultimately comprised five modulo $9$ identities and eleven
		modulo $12$ identities.  The eleven modulo $12$ conjectures were
		subsequently settled through the work of Bringmann,
		Jennings-Shaffer, and Mahlburg and of Rosengren.  In this paper
		we prove all five modulo $9$ Kanade--Russell sum-product identities,
		four individual generalized Nahm-sum dual identities, and a product
		formula for the natural dual companion of the fifth
		Kanade--Russell identity, which is expressed as a linear combination
		of two negative-mixed-term generalized Nahm sums.  The first three
		individual dual identities settle Conjecture~3.6 of Wang and Wang,
		while the fourth proves the corresponding conjecture of Li and Wang.
			Our results also connect directly with the recent Dynkin-diagram
		framework of Sun and Wang for generalized Nahm sums.  They identified
		the rank-two pairs $(T_1,G_2)$ and $(G_2,T_1)$ as unresolved cases
		whose modularity would follow, respectively, from the first
		modulo $9$ Kanade--Russell identity and its Wang-Wang dual.
		The present results prove precisely these two required identities
		and hence establish the corresponding modularity statements
		unconditionally.
	\end{abstract}
	
 \maketitle

\setcounter{tocdepth}{1}
\tableofcontents

\section{Introduction}

Two of the most important results in the theory of $q$-series are the classical 
Rogers--Ramanujan identities which state that
\begin{align}
 \mathcal G(q):=\sum_{r\ge0}\frac{q^{r^2}}{(q;q)_r}
&=\frac1{(q,q^4;q^5)_\infty}, \label{RR-1}\\
 \mathcal H(q):=\sum_{r\ge0}\frac{q^{r^2+r}}{(q;q)_r}
&=\frac1{(q^2,q^3;q^5)_\infty}.\label{RR-2}
\end{align}
Here
and throughout this paper we   use standard $q$-series notation: 
\[
(a;q)_0:=1,\qquad
(a;q)_n:=\prod_{j=0}^{n-1}(1-aq^j)\quad(n\ge1),
\qquad
(a;q)_\infty:=\prod_{j=0}^{\infty}(1-aq^j).
\]
In addition, for any positive
integer $m$, we write 
\[
(a_1,a_2,\ldots,a_m;q)_\infty:=(a_1;q)_\infty (a_2;q)_\infty \cdots (a_m;q)_\infty . 
\]
Infinite products and analytic identities are considered for $|q|<1$.
When a formula contains negative powers of $q$, we initially assume
$0<|q|<1$ and justify its extension at $q=0$ separately. We also use
formal power series, with limits understood coefficientwise.

 The identities were first discovered and proved by
 Rogers~\cite{rogers} in 1894 and were independently rediscovered
 by Ramanujan before 1913; Ramanujan's proof was published in
 1919~\cite{Ramanujan}.  Schur~\cite{schur} obtained independent
 proofs and their classical partition-theoretic interpretation in
 1917.

  These analytic identities have elegant partition-theoretic interpretations.
 The first asserts that the number of partitions of a nonnegative integer \(N\) in which adjacent parts differ by at least \(2\) equals the number of partitions of \(N\) into parts congruent to \(1\) or \(4\) modulo \(5\). The second gives the analogous equality when the adjacent parts differ by at least \(2\) and the smallest part is at least \(2\), with the corresponding product counting partitions into parts congruent to \(2\) or \(3\) modulo \(5\). A further structural interpretation was provided by Lepowsky and Milne \cite{lepowsky-milne}, who identified the product sides as principally specialized characters of level-three standard modules for the affine Lie algebra \(A_1^{(1)}\). Lepowsky and Wilson \cite{lepowsky-wilson}  then developed vertex-operator and \(Z\)-algebra methods to obtain representation-theoretic proofs.  During the twentieth century, the work of   Slater~\cite{slater}, Gordon~\cite{gordon}, Andrews~\cite{Andrews-1,Andrews-2}, and others led to numerous analytic and combinatorial generalizations, including the Bailey-pair method and the Andrews--Gordon identities. For a comprehensive introduction to the history, theory, and various
 extensions of the Rogers--Ramanujan identities, including many
 identities of Rogers--Ramanujan type, we refer to
 Sills~\cite{sills}.

Motivated by such partition identities, Kanade and
Russell~\cite{Kanade--Russell} developed the Maple package \texttt{IdentityFinder} to
search for product generating functions for partitions with prescribed
initial and difference conditions. Their method enumerates partitions subject to prescribed
initial and difference conditions and then uses Euler's algorithm to
detect possible infinite-product representations.    In their original
 paper, Kanade and Russell \cite{Kanade--Russell} proposed six conjectural identities, four
 of modulus \(9\) and two of modulus \(12\). A fifth modulo \(9\) companion was later
 recorded in Russell's doctoral thesis \cite{russell-thesis}, while
 nine additional modulo \(12\) conjectures appeared in subsequent work
 of Kanade and Russell \cite{staircases}.  Kur\c{s}ung\"oz \cite{kursungoz} 
 constructed positive Andrews--Gordon-type sum representations for
 the four original modulo \(9\) conjectures, together with alternative
 sum sides for the two original modulo \(12\) conjectures; the double-sum form of the fifth modulo \(9\)
 identity was subsequently given by Uncu and Zudilin
 \cite{uncu-zudilin}.   For the eleven modulo $12$ identities,
 Bringmann, Jennings--Shaffer, and Mahlburg~\cite{bringmann} proved
 seven and reduced the other four to basic hypergeometric identities.
 Rosengren~\cite{rosengren} proved the remaining four and gave new
 proofs of five of the previously established identities. In the
 modulo $9$ setting, Tsuchioka~\cite{tsuchioka} developed a
 vertex-operator reformulation involving $D_4^{(3)}$. Reflection of
 finite versions of these identities was studied by Uncu and
 Zudilin~\cite{uncu-zudilin} and Konenkov~\cite{konenkov}.

Our first aim is to prove the five modulo $9$ sum--product identities.
Define
\begin{equation}\label{eq:S}
 S(a,b):=\sum_{r,s\ge0}
 \frac{q^{r^2+3rs+3s^2+ar+bs}}
 {(q;q)_r(q^3;q^3)_s}.
\end{equation}
\begin{theorem}[The five modulo nine Kanade--Russell identities] 
\label{thm:main}
The following identities hold:
\begin{align}
 S(0,0)&=\frac1{(q,q^3,q^6,q^8;q^9)_\infty}=:P_1(q),\label{eq:kr1}\\
 S(1,3)&=\frac1{(q^2,q^3,q^6,q^7;q^9)_\infty}=:P_2(q),\label{eq:kr2}\\
 S(2,3)&=\frac1{(q^3,q^4,q^5,q^6;q^9)_\infty}=:P_3(q),\label{eq:kr3}
\end{align}
and
\begin{align}
 S(1,2)&=\frac1{(q^2,q^3,q^5,q^8;q^9)_\infty}=:P_4(q),\label{eq:kr4}\\
 S(1,4)+qS(2,4)&=\frac1{(q,q^4,q^6,q^7;q^9)_\infty}=:P_5(q).
 \label{eq:kr5}
\end{align}
\end{theorem}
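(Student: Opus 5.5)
The plan is to collapse each double sum $S(a,b)$ to a single sum by summing over $r$ for fixed $s$, and then to identify that single sum with the product $P_i(q)$ by a partial-fraction, or residue, argument.

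\textbf{Step 1: the inner sum over $r$.} Fix $s$ and consider
\[
\sum_{r\ge0}\frac{q^{r^2+(3s+a)r}}{(q;q)_r}.
\]
This is a Rogers--Ramanujan type sum with a shifted linear term. Summed alone it has no product form. Instead I would introduce a parameter and work with
\[
F_a(z):=\sum_{r\ge0}\frac{q^{r^2+ar}z^r}{(q;q)_r}.
\]
This function satisfies the $q$-difference equation $F_a(z)-F_a(zq)=zq^{1+a}F_a(zq^2)$. Then
\[
S(a,b)=\sum_{s}\frac{q^{3s^2+bs}}{(q^3;q^3)_s}\,F_a(q^{3s}).
\]

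\textbf{Step 2: a constant-term representation.} I would write $q^{r^2}/(q;q)_r$ and $q^{3s^2}/(q^3;q^3)_s$ as constant terms of theta-function quotients, using the Jacobi triple product together with $1/(x;q)_\infty=\sum x^n/(q;q)_n$. This expresses $S(a,b)$ as
\[
\mathrm{CT}_z\,\frac{\theta(\cdot)}{(z;q)_\infty\,(z^{-3}\cdots;q^3)_\infty}.
\]
The combination $r^2+3rs+3s^2=(r+\tfrac32 s)^2+\tfrac34 s^2$ suggests substituting $n=r+s$ so that a single theta kernel appears. After that, the constant term becomes a sum of residues at the poles $z=q^{-k}$ and $z^3=q^{3k}$. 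The cube roots of unity $\omega$ appear here, which is where modulus $9$ enters: $(z^3;q^3)_\infty=(z,\omega z,\omega^2 z;q)_\infty$.

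The residue at each family of poles should give a bilateral series. By Ramanujan's ${}_1\psi_1$ summation, or the quintuple product identity, this series evaluates to a product of the form $(q^j,q^{9-j};q^9)_\infty(q^9;q^9)_\infty$, divided by $(q;q)_\infty$-type factors. The identity to check is then
\[
\frac{1}{(q^{i},q^{3},q^{6},q^{9-i};q^9)_\infty}
=\frac{(q^{9};q^9)_\infty(q^{i'},q^{9-i'};q^9)_\infty\cdots}{(q;q)_\infty}.
\]
This is routine once the right theta quotient is isolated. The cases $(a,b)=(0,0),(1,3),(2,3)$ should come out of one uniform parametrized computation with parameter $a$. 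The identity \eqref{eq:kr4} $S(1,2)$ and the combination \eqref{eq:kr5} need a shifted kernel, where $b\not\equiv0\pmod 3$ effectively twists by $\omega$.

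\textbf{Step 3: the main obstacle.} I expect the hard step to be showing that the unwanted residue contributions cancel. These are the contributions from the $\omega$-twisted poles, or from the double poles where the two families of poles collide. I would first verify the cancellation by pairing the residues at $\omega z$ and $\omega^2 z$ and using the divided-difference identity
\[
\frac{f(x)-f(y)}{x-y}
\]
on the relevant theta functions. For \eqref{eq:kr5}, the extra term $qS(2,4)$ should be exactly what is needed to cancel a leftover non-modular piece. If the residue method stalls, my fallback is to show that both sides satisfy the same system of $q$-difference equations in an auxiliary variable, derived from Step 1, and then to compare initial coefficients.
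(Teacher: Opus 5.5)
Your proposal has a genuine gap. Steps~2 and~3 contain the entire difficulty and are not carried out, and in the form you sketch, Step~2 cannot be set up at all.

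With one coupling variable $z$, a theta kernel contributes only a rank-one quadratic $c(\alpha r+\beta s)^2$. The expansions of $1/(x;q)_\infty$, $(-x;q)_\infty$ and their base-$q^3$ analogues contribute only $0$ or $\binom r2$ to the $r$-exponent, and $0$ or $3\binom s2$ to the $s$-exponent. You would therefore need $Ar^2+3rs+Cs^2$, with $A\in\{1,\tfrac12\}$ and $C\in\{3,\tfrac32\}$, to be a perfect square, i.e.\ $4AC=9$. But $4AC\in\{12,6,3\}$. The substitution $n=r+s$ does not help: $r^2+3rs+3s^2=n^2+ns+s^2$ is still a rank-two form (the $A_2$ norm form).

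So you are forced either into a genuinely two-variable constant term, or into leaving an unsummed inner series inside the constant term. In either case the chain ``residues $\to$ bilateral series $\to$ ${}_1\psi_1$ or quintuple product'' is only asserted. No bilateral series is written down. No argument is given for summing the infinitely many residues, which accumulate at $0$ and $\infty$. And the cancellation of the ``unwanted'' residues, which you yourself flag as the main obstacle, is essentially the whole theorem. Calling the final theta-quotient check routine is not credible for identities that were open conjectures.

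Your fallback is also missing its key object. A uniqueness argument via a $q$-difference equation needs an explicit product-side solution satisfying that same equation, and the products $P_i$ have no evident deformation in an auxiliary variable. The paper instead deforms in a discrete parameter:
\begin{itemize}
\item The contiguous relations \eqref{2-1}--\eqref{2-2} give the $3\times3$ recurrence \eqref{a:eq:4} for $(S(3n,6n),S(3n+1,6n+3),S(3n+2,6n+3))$.
\item The product side is interpolated by the normalized divided differences $\mathcal E_9(q^3;q^3)_n\,\Phi_{9,c}[q^{r_0}+q^{c-r_0},\ldots]$, which reduce to $P_1,P_2,P_3$ at $n=0$.
\item Their recurrence is proved by finite residue sums over the lattice $\{q^r:3\nmid r\}$, together with explicit rational certificates $T_i=R_i+\rho\,R_i(q^9t)$.
\item Uniqueness comes not from initial coefficients but from the boundary $n\to\infty$. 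There both vector families are $(1,1,1)^{T}+O(q^{3n-1})$, and the recurrence matrices have entries in $\mathbb C[[q]]$ (Lemma~\ref{lem:boundary-unique}).
\end{itemize}
The fourth and fifth identities are handled the same way, using $\Phi_{3,c}$ at the cyclotomic nodes $\omega q^{n+\tau+j}+\omega^2q^{2n+\tau-j}$ and third-order scalar recurrences. This is where your intuition about $\omega$ actually enters. Without some such explicit product-side family, and a proof that it satisfies the sum-side recurrence, the proposal does not reach the identities.
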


The arguments of the $P_i$ are suppressed when no ambiguity arises.
The sum representations in the first four identities were obtained
by Kur\c{s}ung\"oz~\cite{kursungoz}, whereas the double-sum
representation in the fifth identity was subsequently given by
Uncu and Zudilin~\cite{uncu-zudilin}.

Our second aim is to establish four individual identities with the
opposite mixed term, together with a dual companion for the fifth
Kanade--Russell identity. For integer parameters $a,b$, write
\begin{equation}\label{eq:dual-sum}
 \mathfrak N(a,b):=\sum_{r,s\ge0}
 \frac{q^{r^2-3rs+3s^2+ar+bs}}
 {(q;q)_r(q^3;q^3)_s}.
\end{equation}
Positive definiteness of the quadratic form makes this a well-defined
formal Laurent series for every fixed pair $a,b$. All four sums in
the following theorem are power series.

\begin{theorem}[The four dual identities]\label{thm:dual-main}\label{Th-2}
With $P_1,P_2,P_3$ as in Theorem~\ref{thm:main},
\begin{align}
 \mathfrak N(0,0)&=P_1^2+qP_2P_3,\label{eq:ww1}\\
 \mathfrak N(-1,3)&=P_2^2+P_1P_3,\label{eq:ww2}\\
 \mathfrak N(1,0)&=P_1P_2-qP_3^2,\label{eq:ww3}\\
 \mathfrak N(0,1)&=
 \frac{(q^6;q^9)_\infty}
 {(q,q^2,q^2,q^4,q^5,q^5;q^6)_\infty}.
 \label{eq:li-wang-conjecture}
\end{align}
\end{theorem}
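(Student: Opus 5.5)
The plan is to reduce each dual sum $\mathfrak N(a,b)$ to single sums that can be compared with the Kanade--Russell sums $S(a,b)$ of Theorem~\ref{thm:main}. Quadratic forms $r^2+3rs+3s^2$ and $r^2-3rs+3s^2$ are related by the substitution $r\mapsto r+3s$ (or $r\mapsto 3s-r$), so the main tool is a generating-function manipulation in $s$.

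\textbf{Step 1: sum over $r$.} Fix $s$ and sum over $r$ first. The inner sum
\[
\sum_{r\ge0}\frac{q^{r^2+(a-3s)r}}{(q;q)_r}
\]
is a Rogers--Ramanujan type series with a shifted linear term. For $3s>a$ the exponent $r^2-(3s-a)r$ is negative for small $r$, so I would use the standard expansion of $\sum_r q^{r^2-Nr}/(q;q)_r$ via $q$-binomial (Schur polynomial) decompositions. This writes it as a combination of $\mathcal G(q)$ and $\mathcal H(q)$ times polynomial coefficients, or more usefully as $\sum_j$ of $q$-binomials $\begin{bmatrix}N\\j\end{bmatrix}$. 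Equivalently, I would introduce the two-variable function $F(x)=\sum_r x^r q^{r^2}/(q;q)_r$ and write
\[
\mathfrak N(a,b)=\sum_s \frac{q^{3s^2+bs}}{(q^3;q^3)_s}F(q^{a-3s}).
\]
The functional equation $F(x)=F(xq)+xqF(xq^2)$ lets me shift $x=q^{a-3s}$ back to the positive range at the cost of Fibonacci-type polynomial coefficients.

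\textbf{Step 2: use the constant-term/residue method.} A cleaner route, which I would try first, is to write $1/(q;q)_r$ and $1/(q^3;q^3)_s$ via constant terms in auxiliary variables $z,w$. The Jacobi triple product then turns $\sum q^{r^2\pm 3rs+3s^2}z^rw^s$ into theta functions, and each $\mathfrak N(a,b)$ becomes a constant term
\[
\mathrm{CT}_{z}\,\frac{\theta(\cdot)\,\theta(\cdot)}{(z;q)_\infty(\ldots)}.
\]
Evaluating this by residues, the paper's ``residue certificates'', expresses $\mathfrak N(a,b)$ as bilinear combinations of modulo-$9$ theta quotients. These are exactly products of the $P_i$, which explains the right-hand sides $P_1^2+qP_2P_3$, $P_2^2+P_1P_3$ and $P_1P_2-qP_3^2$.

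\textbf{Step 3: identify the products.} The bilinear form arises because the dual quadratic form's inverse matrix is, up to scaling, the inverse of the Kanade--Russell form. The constant term therefore produces a sum over two independent lattice cosets, each of which is a sum of the type in Theorem~\ref{thm:main}. I would express $\mathfrak N(a,b)$ as $\sum_{i,j}c_{ij}(q)S_i S_j$ with $S_i$ among $S(0,0)$, $S(1,3)$, $S(2,3)$, then substitute \eqref{eq:kr1}--\eqref{eq:kr3}. For \eqref{eq:li-wang-conjecture} the same procedure gives a combination of $P_4,P_5$-type products. That combination must then be collapsed to the stated single product by a theta-function identity, for instance a quintuple product or a Weierstrass three-term relation, checked by verifying enough coefficients after confirming that both sides are modular of the same weight and level.

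\textbf{Main obstacle.} The hardest step is Step 2/3: controlling the residue computation so that the result visibly factors into products of $P_i$ with the exact signs and $q$-powers. This applies especially to \eqref{eq:ww3}, whose minus sign indicates cancellation between cosets, and to the final theta identity for \eqref{eq:li-wang-conjecture}. If the direct residue bookkeeping becomes unwieldy, I would fall back on proving that both sides satisfy the same $q$-difference system in an auxiliary variable. I would then conclude by uniqueness together with initial coefficients.
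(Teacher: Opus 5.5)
\textbf{There is a genuine gap: the key step is asserted from the statement rather than derived.}

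\textbf{The first three identities.} Your Step~2 claims that a residue evaluation of a constant term turns $\mathfrak N(a,b)$ into products of modulo-$9$ theta quotients. Nothing in the proposal explains why residues of a two-variable theta quotient over $(z;q)_\infty(w;q^3)_\infty$ should organize into $P_1^2+qP_2P_3$, $P_2^2+P_1P_3$ or $P_1P_2-qP_3^2$; these forms are simply read off from the statement. In the paper, residues act only on the product side, where they give recurrences for divided differences of infinite-product kernels. They never evaluate a Nahm sum.

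Step~3 then switches to bilinear forms $\sum c_{ij}S_iS_j$. A constant-term computation does not naturally produce these, because the $S(a,b)$ are Nahm double sums, not lattice theta series. The missing ingredient is an actual proof of the bilinear relations of Theorem~\ref{ww:bilinear}, such as $\mathfrak N(0,0)=S(0,0)^2+qS(1,3)S(2,3)$. Once these are available, substituting \eqref{eq:kr1}--\eqref{eq:kr3} is exactly your final step.

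The paper gets the bilinear relations by embedding everything in $n$-families:
\begin{itemize}
\item Put $\mathsf X_n=\mathfrak N(0,3n)$, $\mathsf Y_n=\mathfrak N(-1,3n+3)$, $\mathsf Z_n=\mathfrak N(1,3n)$. The contiguous relations give $(\mathsf X_n,\mathsf Y_n,\mathsf Z_n)^T=\mathsf D_n(\mathsf X_{n+1},\mathsf Y_{n+1},\mathsf Z_{n+1})^T$ for an explicit $3\times3$ matrix with $\det\mathsf D_n=-q^{3n+3}$.
\item Two row vectors are built from positive-mixed-term families. The first is $r_n=(-\gamma_n,\alpha_n,-\beta_n)$, from $S(3n,3n)$, $S(3n+1,3n+3)$, $S(3n+2,3n+3)$. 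The second is $s_n=(-\widehat\beta_n,q^{3n+1}\widehat\gamma_n,\widehat\alpha_n)$, from $S(0,3n)$, $S(1,3n+3)$, $S(2,3n+3)$.
\item They satisfy $r_n\mathsf D_n=-q^{3n+3}r_{n+1}$ and $s_n\mathsf D_n=s_{n+1}$.
\item The identity $\operatorname{cr}(rM,sM)=\operatorname{adj}(M)\operatorname{cr}(r,s)$ then shows that $\operatorname{cr}(r_n,s_n)$ satisfies the dual recurrence. Its entries are exactly the three bilinear expressions.
\item Both vectors equal $(\mathcal G,\mathcal G+\mathcal H,\mathcal H)^T+O(q^{3n+1})$. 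So boundary uniqueness as $n\to\infty$ (Lemma~\ref{lem:boundary-unique}) identifies them.
\end{itemize}

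Your Step~1 observation that $\mathcal G$ and $\mathcal H$ appear reflects this boundary, but by itself it does not connect to the $S$-sums. Your fallback (``same $q$-difference system plus uniqueness'') points the right way. However, it names neither the system nor why a \emph{product} of two $S$-sums should satisfy it, and the cross-product mechanism is precisely that missing idea. Also note that the uniqueness that works here comes from decay at the $n\to\infty$ boundary, not from matching initial coefficients.

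\textbf{The Li--Wang identity.} For \eqref{eq:li-wang-conjecture} the gap is larger. Nothing supports the claim that ``the same procedure gives a combination of $P_4,P_5$-type products.'' You exhibit no bilinear formula for $\mathfrak N(0,1)$ in terms of $S(1,2)$ and $S(1,4)+qS(2,4)$, and give no reason one should exist. So the later theta identity and coefficient check have nothing to act on.

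The paper uses no bilinear relation here. It proves $\mathfrak N(n,1)=\mathfrak W_n$ for all $n\ge0$, where $\mathfrak W_n=\mathcal E_3(-1)^nq^{-3\binom n2}(q;q)_n\Theta_{q^{4-3n}}[\xi_{n,0},\ldots,\xi_{n,n}]$ is a divided difference of a direct product. With $\mathfrak N_n=\mathfrak N(n,1)$, both sides satisfy
$q\mathfrak N_n-(1+q+q^{2n+3})\mathfrak N_{n+1}+(1-q^{n+2})\mathfrak N_{n+2}+q^{n+2}\mathfrak N_{n+3}=0$.
For the product side, this comes from transferring the $\tau=1$ certificate of the fourth Kanade--Russell identity, specialized at $u=q^{-1}$, with the convergent weight $(q^3t,q/t;q^3)_\infty$.

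The companion matrix has entries only in $q^{-1}\mathbb C[[q]]$, so each step can lose one order. A two-term boundary match up to $O(q^{2n+2})$ is therefore required. At $n=0$ the product $\mathcal E_3(\omega q^2,\omega^2q^2;q^3)_\infty$ appears directly, so no theta-function identity is needed.
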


Equations~\eqref{eq:ww1}--\eqref{eq:ww3} settle Conjecture~3.6 of Wang and Wang~\cite{wangwang}; their original product forms are
recovered in Section~7. Equation~\eqref{eq:li-wang-conjecture}
was conjectured by Li and Wang~\cite[Conjecture~6.4]{liwang}. Thus
Theorem~\ref{thm:dual-main} contains three Wang--Wang   identities and
one Li--Wang identity.

The fifth Kanade--Russell identity has a slightly different dual
behavior, since its sum side is itself a linear combination. This leads
to the following additional identity.

\begin{theorem}[A dual companion of the fifth Kanade--Russell identity]
\label{thm:fifth-dual-companion}
For $|q|<1$,
\begin{equation}\label{eq:fifth-dual-companion}
 \mathfrak N(0,2)+q^2\mathfrak N(-2,5)
 =\frac{(q^3;q^9)_\infty}
 {(q;q^3)_\infty^2(q^2;q^3)_\infty}.
\end{equation}
\end{theorem}

We explain precisely the duality that relates these four individual sums to the
first four identities of Theorem~\ref{thm:main}. An ordinary Nahm sum
has the form
\[
 f_{\mathcal A,\boldsymbol b,\kappa}(q)
 =\sum_{\boldsymbol m\in\mathbb Z_{\ge0}^{\ell}}
 \frac{q^{\frac12\boldsymbol m^T\mathcal A\boldsymbol m
                 +\boldsymbol b^T\boldsymbol m+\kappa}}
 {\prod_{i=1}^{\ell}(q;q)_{m_i}},
\]
where $\mathcal A$ is symmetric and positive definite. Zagier's
duality conjecture~\cite{zagier} proposed that modularity is preserved
under the involution
\[
 (\mathcal A,\boldsymbol b,\kappa)\longmapsto
 \left(\mathcal A^{-1},\mathcal A^{-1}\boldsymbol b,
 \tfrac12\boldsymbol b^T\mathcal A^{-1}\boldsymbol b
                  -\tfrac{\ell}{24}-\kappa\right).
\]

Mizuno~\cite{mizuno} generalized the usual Nahm-sum setting to
symmetrizable matrices.  More precisely, he considered generalized
Nahm sums of the form
\begin{equation}
	\widetilde f_{A,\boldsymbol b,\kappa,D}(q)
	=
	\sum_{\boldsymbol m\in\mathbb Z_{\geq0}^{\ell}}
	\frac{
		q^{\frac12\boldsymbol m^{T}AD\boldsymbol m
			+\boldsymbol b^{T}\boldsymbol m+\kappa}
	}{
		\prod_{i=1}^{\ell}(q^{d_i};q^{d_i})_{m_i}
	},
	\qquad
	D=\operatorname{diag}(d_1,\ldots,d_\ell),
	\label{eq:generalized-nahm}
\end{equation}
where the $d_i$ are positive integers and $AD$ is symmetric and
positive definite. 
   The corresponding parameter involution is
\begin{align}
A^*=A^{-1},
\quad
\boldsymbol b^*=A^{-1}\boldsymbol b,
\quad
\kappa^*
=
\frac12
\boldsymbol b^{T}(AD)^{-1}\boldsymbol b
-\frac1{24}\operatorname{tr}D-\kappa,
\quad
D^*=D. \label{1-16}
\end{align}

Recent work of Wang and his collaborators has led to substantial
progress on the explicit evaluation and modularity of low-rank Nahm
sums, including Zagier's rank-two and rank-three examples, tadpole
Nahm sums, and several families of generalized Nahm sums
\cite{cao-rosengren-wang,milas-wang,shi-wang,
	wang-rank-two,wang-rank-three}.
On the duality side, Cao and Wang \cite{cao-wang-lift3,cao-wang-lift4} developed lift-dual constructions
that produce new modular examples, while Shi and Wang \cite{shi-wang-rank3-duals} studied Nahm
sums dual to Zagier's rank-three examples.

Mizuno's examples also motivated further product evaluations by Wang
and Wang~\cite{wang-wang-rank-three,wangwang}, including
the conjectures addressed here.  The duality principle is not valid
in full generality: L.~Wang~\cite{wang-duality} constructed
explicit counterexamples to Zagier's duality expectation and to
Mizuno's generalized duality conjecture.  We therefore prove the four
individual dual identities and the fifth dual companion directly; no
 general preservation of modularity is assumed.

For the sums in this paper,
\[
 \mathcal A_+=\begin{pmatrix}2&1\\3&2\end{pmatrix},\qquad
 \mathcal D=\begin{pmatrix}1&0\\0&3\end{pmatrix},\qquad
 \mathcal A_-:=\mathcal A_+^{-1}
 =\begin{pmatrix}2&-1\\-3&2\end{pmatrix}.
\]
Although $\mathcal A_+$ and $\mathcal A_-$ themselves are not
symmetric, the products
\[
\mathcal A_+D
=
\begin{pmatrix}
	2&3\\
	3&6
\end{pmatrix},
\qquad
\mathcal A_-D
=
\begin{pmatrix}
	2&-3\\
	-3&6
\end{pmatrix}
\]
are symmetric and positive definite, as required in the generalized
Nahm-sum framework.

The two quadratic forms are
$\frac12(r,s)\mathcal A_\pm\mathcal D(r,s)^T
 =r^2\pm3rs+3s^2$, and the linear parameters transform as
\[
 (a,b)^T\longmapsto(2a-b,-3a+2b)^T.
\]
Consequently the four correspondences, with the scalar normalizing
power $q^\kappa$ suppressed, are
\begin{equation}\label{eq:dual-correspondence}
 \begin{array}{c|c|c}
 \text{Kanade--Russell sum}&\text{dual sum}&\text{dual identity}\\ \hline
 S(0,0)&\mathfrak N(0,0)&\eqref{eq:ww1}\\
 S(1,3)&\mathfrak N(-1,3)&\eqref{eq:ww2}\\
 S(2,3)&\mathfrak N(1,0)&\eqref{eq:ww3}\\
 S(1,2)&\mathfrak N(0,1)&\eqref{eq:li-wang-conjecture}
 \end{array}
\end{equation}

The fifth identity is slightly different because its sum side is the
linear combination $S(1,4)+qS(2,4)$.  The linear parameters transform as
\[
 (1,4)^T\longmapsto(-2,5)^T,
 \qquad
 (2,4)^T\longmapsto(0,2)^T.
\]
The scalar parameter in \eqref{1-16} explains the
relative power of $q$.  For $S(1,4)$ we take $\kappa=0$ and obtain
$\kappa^*=13/6$, whereas the term $qS(2,4)$ has $\kappa=1$ and gives
$\kappa^*=1/6$.  After suppressing the common factor $q^{1/6}$, the
natural dual combination is therefore
\[
 \mathfrak N(0,2)+q^2\mathfrak N(-2,5).
\]
Theorem~\ref{thm:fifth-dual-companion}, proved in
Section~\ref{sec:fifth-dual}, gives its product evaluation.  Thus the
fifth identity also has a concrete dual companion, although unlike the
first four correspondences it is naturally expressed as a linear
combination of two generalized Nahm sums.

Following Sun and Wang \cite{sun-wang}, let $X$ and $Y$ be Dynkin diagrams from the
families under consideration, and let $C(X)$ denote the Cartan matrix
associated with $X$.  We write $D(X)$ for the diagonal symmetrizing
matrix, normalized so that its diagonal entries are positive integers
with greatest common divisor one and
$
C(X)D(X)
$
is symmetric.  If $r(X)$ denotes the rank of $X$, then $C(X)$ and
$D(X)$ are $r(X)\times r(X)$ matrices.  For a pair $(X,Y)$, Sun and
Wang associate the generalized Nahm-sum data
\[
\mathcal A(X,Y):=C(X)\otimes C(Y)^{-1},
\qquad
D(X,Y):=D(X)\otimes D(Y),
\]
where $\otimes$ denotes the Kronecker product.  
For the tadpole diagram $T_1$ and the $G_2$ diagram one has
\[
 C(T_1)=(1),\qquad D(T_1)=(1),\qquad
 C(G_2)=\begin{pmatrix}2&-1\\-3&2\end{pmatrix},\qquad
 D(G_2)=\begin{pmatrix}1&0\\0&3\end{pmatrix}.
\]
Consequently,
\[
 \mathcal A(T_1,G_2)=C(G_2)^{-1}=\mathcal A_+,\qquad
 \mathcal A(G_2,T_1)=C(G_2)=\mathcal A_-,
\]
while in both cases the diagonal matrix is precisely $\mathcal D$.
Thus the exchange of the two Dynkin diagrams realizes exactly the
matrix inversion $\mathcal A_+\leftrightarrow\mathcal A_-$ and hence
the passage from the positive mixed term $r^2+3rs+3s^2$ to the
negative mixed term $r^2-3rs+3s^2$ used in this paper.

 Sun and Wang~\cite{sun-wang} recorded seven rank-at-most-three cases
 whose modularity had not yet been established:
 \[
 (T_1,G_2),\quad (G_2,T_1),\quad (T_3,A_1),\quad
 (C_3,A_1),\quad (C_3,T_1),\quad (B_3,A_1),\quad (B_3,T_1).
 \]
 Subsequently, Shi and Wang~\cite{shi-wang-rank3-duals} proved the
 $(T_3,A_1)$ case, leaving six unresolved cases.  Among these, Sun and
 Wang \cite{sun-wang} had observed that the modularity of $(T_1,G_2)$ would follow from
 the first modulo nine Kanade--Russell identity, whereas that of
 $(G_2,T_1)$ would follow from the corresponding Wang--Wang dual
 identity.  These are precisely \eqref{eq:kr1} and \eqref{eq:ww1},
 respectively.  The present paper proves both required identities and
 therefore establishes the modularity of these two cases
 unconditionally.  Relative to the status recorded in these works, the
 remaining unresolved cases are
 \[
 (C_3,A_1),\qquad (C_3,T_1),\qquad
 (B_3,A_1),\qquad (B_3,T_1).
 \]
 
 Moreover, the Dynkin-diagram conjecture of Sun and Wang is formulated
 at zero linear parameter, whereas
 \eqref{eq:dual-correspondence} contains, in addition to the
 $\boldsymbol b=0$ pair, three nonzero individual linear-parameter correspondences
 under
 \[
 \boldsymbol b\longmapsto \mathcal A_+^{-1}\boldsymbol b.
 \]
 Thus the present results extend this particular
 $(T_1,G_2)\leftrightarrow(G_2,T_1)$ correspondence beyond the
 zero-$\boldsymbol b$ case.  The fifth dual companion is likewise
 compatible with the same parameter involution, with the scalar
 normalization accounting for the relative factor $q^2$. 
 Thus the present results establish the required $q$-series identities
 and modularity statements.  We do not address the further problem of
 identifying the corresponding rational conformal field theories.  

Our proofs are based on explicit recurrences and boundary uniqueness.
For the five Kanade--Russell identities, we construct product-side
sequences from ordinary divided differences of quadratic-factor
kernels.  Finite local residue sums and explicit algebraic
certificates establish the required recurrences, while coefficientwise
boundary estimates identify the sum-side and product-side solutions.
For the first three dual identities, two row-vector certificates
produce bilinear relations between the positive- and
negative-mixed-term Nahm sums.  For the fourth dual identity, the
finite $\tau=1$ certificate for the fourth Kanade--Russell identity is
transferred to a convergent direct-product kernel.  The same transfer
applied to the $\tau=2$ certificate proves the dual companion of the
fifth identity.  These arguments are direct and do not rely on any
general principle asserting that Nahm-sum duality preserves modularity.

The paper is organized as follows.  
Section~\ref{sec:preliminaries} collects the common lemmas.
Sections~\ref{sec:first-three}--\ref{sec:first-proof} prove the first
three Kanade--Russell identities, and
Sections~\ref{sec:cyclotomic}--\ref{sec:cyclotomic-proof} prove the
fourth and fifth. Section~\ref{sec:wang-wang} establishes the three
Wang--Wang dual identities of Conjecture~3.6, and
Section~\ref{sec:li-wang} proves the Li--Wang identity.
Section~\ref{sec:fifth-dual} proves the dual companion of the fifth
Kanade--Russell identity by transferring the $\tau=2$ certificate.
Section~10 contains concluding remarks, while
Appendix~\ref{app:coefficients} records the explicit coefficient data
for the cyclotomic certificates.

\section{Preliminaries}\label{sec:preliminaries}

%We use the empty-product convention $(u;q^d)_0=1$. Elementary cancellation
%gives
%\begin{equation}\label{eq:finite-product-shift}
% \begin{aligned}
% (u;q^d)_{n+1}&=(1-uq^{dn})(u;q^d)_n,\\
% (u;q^d)_\infty&=(u;q^d)_n(uq^{dn};q^d)_\infty,\\
% \frac{(u;q^d)_m}{(uq^{-dh};q^d)_m}
% &=\frac{(uq^{d(m-h)};q^d)_h}{(uq^{-dh};q^d)_h}
% \qquad(m,h\ge0).
% \end{aligned}
%\end{equation}
%The last equality is a rational identity, including $m<h$: cross
%multiplication gives the same product of $m+h$ consecutive factors.
%These formulas will be used to derive all kernel quotients, including
%the small orders at which shifted strings overlap.

For a formal Laurent series $f$, let $\operatorname{ord}_q f$ denote
its least exponent, with $\operatorname{ord}_q0=+\infty$. The notation
$f=O(q^L)$ means $f\in q^L\mathbb C[[q]]$. Here and throughout, $\mathbb C$ is the set 
 of complex numbers and  $\mathbb C[[q]]$ 
 denotes the ring of formal power series in \(q\) with complex coefficients.   For vectors it applies
componentwise. 

For every fixed pair of integers $a,b$, both quadratic polynomials 
$
r^2+3rs+3s^2+ar+bs$ and $
r^2-3rs+3s^2+ar+bs$ 
are bounded below on $\mathbb Z_{\geq0}^2$ and tend to $+\infty$
as $r+s\to\infty$.  Consequently, for fixed $a,b$, the sums
$S(a,b)$ and $\mathfrak N(a,b)$ are well-defined formal Laurent
series: only finitely many pairs $(r,s)$ can contribute to any
prescribed power of $q$.  In all instances of $S(a,b)$ used below,
the parameters are nonnegative, so these series are in
$\mathbb C[[q]]$.  Some dual sums involve negative linear parameters
and may initially be Laurent series; for example,
$\mathfrak N(-2,5)$ has lowest $q$-degree $-1$.

For every fixed $0<\rho<1$, the same positive-definite quadratic
growth, together with the lower bounds for
$(q;q)_r$ and $(q^3;q^3)_s$, gives absolute and locally uniform
convergence on compact subsets of the punctured disk
$0<|q|<1$.  Whenever the resulting series has no negative powers,
it extends analytically to $q=0$.

\begin{lemma}\label{lem:contiguous}
  For all nonnegative integers \(a\) and \(b\), direct
index shifts give
 \begin{align}
 S(a,b)-S(a+1,b)&=q^{a+1}S(a+2,b+3), \label{2-1}\\
 S(a,b)-S(a,b+3)&=q^{b+3}S(a+3,b+6). \label{2-2}
 \end{align}                                                    
\end{lemma}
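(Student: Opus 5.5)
Both identities follow from the elementary factor identities $1/(q;q)_r - q^r/(q;q)_r = 1/(q;q)_{r-1}$ for $r\ge1$ and $1/(q^3;q^3)_s - q^{3s}/(q^3;q^3)_s = 1/(q^3;q^3)_{s-1}$ for $s\ge1$. In each case we combine the two sums termwise, drop the vanishing boundary term, and shift the index.

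For \eqref{2-1}, the difference of the summands of $S(a,b)$ and $S(a+1,b)$ is
\[
\frac{q^{r^2+3rs+3s^2+ar+bs}(1-q^r)}{(q;q)_r(q^3;q^3)_s}.
\]
The $r=0$ terms vanish. For $r\ge1$ the factor $1-q^r$ cancels against $(q;q)_r$, leaving $(q;q)_{r-1}$ in the denominator. Substituting $r=r'+1$ with $r'\ge0$, the exponent becomes
\[
(r'+1)^2+3(r'+1)s+3s^2+a(r'+1)+bs
= r'^2+3r's+3s^2+(a+2)r'+(b+3)s+(a+1).
\]
Hence the difference equals $q^{a+1}S(a+2,b+3)$.

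For \eqref{2-2}, the summand difference carries the factor $1-q^{3s}$. The $s=0$ terms vanish. For $s\ge1$ we cancel to get $(q^3;q^3)_{s-1}$ and substitute $s=s'+1$. The exponent becomes
\[
r^2+3r(s'+1)+3(s'+1)^2+ar+b(s'+1)
= r^2+3rs'+3s'^2+(a+3)r+(b+6)s'+(b+3).
\]
This yields $q^{b+3}S(a+3,b+6)$.

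All series involved are formal power series in which each coefficient receives only finitely many contributions, because the quadratic form is positive definite. So the termwise subtraction and the reindexing are legitimate coefficientwise. The only point needing any care is the exponent bookkeeping; there is no real obstacle. The hypothesis $a,b\ge0$ only ensures that the series are power series, and the same argument would work for arbitrary integers $a,b$.
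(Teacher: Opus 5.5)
Your proposal is correct and follows the paper's proof essentially step for step: subtract termwise, cancel $1-q^r$ (respectively $1-q^{3s}$) against the finite product, shift the index, and match the exponents, with the same two exponent identities. Your closing remark that the argument works for arbitrary integers $a,b$ is also right; the paper uses exactly this when it proves the analogous relations for $\mathfrak N(a,b)$.
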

\begin{proof}
Note that 
\begin{align}
	 S(a,b)-S(a+1,b)&=\sum_{r,s\ge0}
	\frac{q^{r^2+3rs+3s^2+ar+bs}  }
	{(q;q)_r(q^3;q^3)_s}-\sum_{r,s\ge0}
	\frac{q^{r^2+3rs+3s^2+(a+1)r+bs}  }
	{(q;q)_r(q^3;q^3)_s} \nonumber\\
	&=\sum_{r\geq 1,\ s\ge0}
	\frac{q^{r^2+3rs+3s^2+ar+bs} (1-q^r) }
	{(q;q)_r(q^3;q^3)_s} \nonumber\\
	&=\sum_{r\geq 1,\ s\ge0}
	\frac{q^{r^2+3rs+3s^2+ar+bs}   }
	{(q;q)_{r-1}(q^3;q^3)_s}
	\nonumber\\
	&=\sum_{r, s\ge0}
	\frac{q^{(r+1)^2+3(r+1)s+3s^2+a(r+1)+bs}   }
	{(q;q)_{r}(q^3;q^3)_s}  . \label{2-3}
\end{align}	
Combining \eqref{2-3} with 
\[
 (r+1)^2+3(r+1)s+3s^2+a(r+1)+bs
 =r^2+3rs+3s^2+(a+2)r+(b+3)s+a+1 
\]
 gives \eqref{2-1}.

Similarly,
\begin{align}
	S(a,b)-S(a,b+3)&=\sum_{r,s\ge0}
	\frac{q^{r^2+3rs+3s^2+ar+bs}  }
	{(q;q)_r(q^3;q^3)_s}-\sum_{r,s\ge0}
	\frac{q^{r^2+3rs+3s^2+ar+(b+3)s}  }
	{(q;q)_r(q^3;q^3)_s} \nonumber\\
	&=\sum_{r\geq 0,\ s\ge1}
	\frac{q^{r^2+3rs+3s^2+ar+bs} (1-q^{3s}) }
	{(q;q)_r(q^3;q^3)_s} \nonumber\\
	&=\sum_{r\geq 0,\ s\ge1}
	\frac{q^{r^2+3rs+3s^2+ar+bs}   }
	{(q;q)_{r}(q^3;q^3)_{s-1}}
	\nonumber\\
	&=\sum_{r, s\ge0}
	\frac{q^{r^2+3r(s+1)+3(s+1)^2+ar+b(s+1)}   }
	{(q;q)_{r}(q^3;q^3)_s}  . \label{2-4}
\end{align}	
By \eqref{2-4} and the following identity, 
\[
 r^2+3r(s+1)+3(s+1)^2+ar+b(s+1)
 =r^2+3rs+3s^2+(a+3)r+(b+6)s+b+3,
\]
we obtain \eqref{2-2}. 
\end{proof}

For distinct nodes $z_0,\ldots,z_n$, the ordinary divided difference is
\begin{equation}\label{eq:dd-definition}
 f[z_0,\ldots,z_n]:
 =\sum_{i=0}^n\frac{f(z_i)}{\prod_{j\ne i}(z_i-z_j)}.
\end{equation}
It is symmetric in the nodes and satisfies the product rule
\begin{equation}\label{eq:dd-product-rule}
 (fg)[z_0,\ldots,z_n]
 =\sum_{i=0}^n f[z_0,\ldots,z_i]g[z_i,\ldots,z_n],
\end{equation}
which  will be used in the proof of Lemma \ref{lem:dd-expansion}.

These formulas follow from polynomial interpolation, or by induction
from the recursive definition of divided differences; see \cite{deboor}.
For the affine reciprocal factor
\begin{align}
f_j(z)=\frac{1}{1-zq^{dj}+q^{c+2dj}},\label{a-1}
\end{align} 
subtraction of reciprocals and
induction give
\begin{equation}\label{eq:one-factor}
 f_j[z_i,\ldots,z_{i+s}]
 =\frac{q^{djs}}
 {\prod_{\ell=i}^{i+s}(1-z_\ell q^{dj}+q^{c+2dj})}.
\end{equation}

Throughout this paper, define 
\begin{equation}\label{eq:kernel}
	\Phi_{d,c}(z):=\prod_{j\ge0}
	(1-zq^{dj}+q^{c+2dj})^{-1},
\end{equation}
where  \(c\) and \(d\) are positive integers.  
Indeed,
\[
1-\left(t+\frac{q^c}{t}\right)q^{dj}+q^{c+2dj}
=(1-tq^{dj})\left(1-\frac{q^{c+dj}}{t}\right),
\]
and hence
\begin{align}\label{2-9}
\Phi_{d,c}(t+q^c/t)=\frac1{(t,q^c/t;q^d)_\infty}.
\end{align}

\begin{lemma} \label{lem:dd-expansion}
Let $c,d$ be positive integers. Suppose that
$\alpha_i=\zeta_iq^{r_i}$ and $\beta_i=\zeta_i^{-1}q^{c-r_i}$,
where $\zeta_i\in\mathbb C\setminus \{0\}$,  and  $r_i$ and $c-r_i$ are positive  integers. 
  Suppose that the nodes
$z_i=\alpha_i+\beta_i$ are distinct. Set
$d_i(j)=(1-\alpha_iq^{dj})(1-\beta_iq^{dj})$.
For $n\ge1$,
\begin{equation}\label{eq:dd-expansion}
 \begin{aligned}
 \Phi_{d,c}[z_0,\ldots,z_n]
 ={}&\sum_{0\le k_1\le\cdots\le k_n}
 q^{d(k_1+\cdots+k_n)}
 \left(\prod_{j=0}^{k_1}d_0(j)^{-1}\right)\\
 &\quad\times\left(\prod_{i=1}^{n-1}
             \prod_{j=k_i}^{k_{i+1}}d_i(j)^{-1}\right)
 \left(\prod_{j=k_n}^{\infty}d_n(j)^{-1}\right).
 \end{aligned}
\end{equation}
For $n=0$, the expression is the single evaluated infinite product.
In either case,
\begin{equation}\label{eq:dd-leading}
 \Phi_{d,c}[z_0,\ldots,z_n]
 =\frac1{(q^d;q^d)_n}+O(q^L),
\end{equation}
where 
$$
L:=\min\limits_{0\leq i \leq n}\{r_i,c-r_i\} .
$$
\end{lemma}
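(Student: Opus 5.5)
Write $\Phi=\Phi_{d,c}=\prod_{j\ge0}f_j$, with $f_j$ as in \eqref{a-1}. The plan is to prove \eqref{eq:dd-expansion} by applying the product rule \eqref{eq:dd-product-rule} repeatedly to this infinite product, using the closed form \eqref{eq:one-factor} for each factor. We then read off \eqref{eq:dd-leading} from the explicit expansion.

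\textbf{The expansion.} First truncate: put $\Phi^{(N)}=\prod_{j=0}^{N}f_j$. Iterating \eqref{eq:dd-product-rule} over the $N+1$ factors gives
\[
\Phi^{(N)}[z_0,\ldots,z_n]=\sum_{0=m_{-1}\le m_0\le\cdots\le m_N=n}\ \prod_{j=0}^{N} f_j[z_{m_{j-1}},\ldots,z_{m_j}].
\]
Here factor $j$ receives the consecutive block of nodes from $z_{m_{j-1}}$ to $z_{m_j}$, and consecutive blocks share an endpoint.

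Next reparametrize by the jump positions. For $i=1,\dots,n$, let $k_i$ be the first $j$ with $m_j\ge i$. Then $0\le k_1\le\cdots\le k_n\le N$, and factor $j$ carries $s_j=m_j-m_{j-1}$ jumps. By \eqref{eq:one-factor}, factor $j$ contributes $q^{djs_j}$ divided by $\prod_{\ell}(1-z_\ell q^{dj}+q^{c+2dj})$, the product running over its nodes. Since $\alpha_i\beta_i=q^c$, we have $1-z_iq^{dj}+q^{c+2dj}=d_i(j)$.

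Multiplying over $j$, the total power of $q$ is $q^{d\sum_j js_j}=q^{d(k_1+\cdots+k_n)}$. Node $z_i$ appears in exactly the factors $j$ with $k_i\le j\le k_{i+1}$, using the conventions $k_0=0$ and $k_{n+1}=N$. This gives the truncated version of \eqref{eq:dd-expansion}.

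Finally let $N\to\infty$ coefficientwise in $q$. Each $d_i(j)^{-1}$ is a power series, because $r_i>0$ and $c-r_i>0$. The $k$-sum converges, since the weight is $q^{d\sum k_i}$. Also $\Phi^{(N)}[z_0,\ldots,z_n]\to\Phi[z_0,\ldots,z_n]$, because the divided difference is a finite linear combination of evaluations with $N$-independent denominators $\prod_{j\ne i}(z_i-z_j)$. Those denominators are fixed Laurent series, or we may work analytically for $0<|q|<1$. The case $n=0$ is trivial.

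\textbf{The leading term.} Every $\alpha_iq^{dj}$ and $\beta_iq^{dj}$ is divisible by $q^{L}$, since $\zeta_i$ is a constant. Hence each $d_i(j)^{-1}=1+O(q^L)$, uniformly in $j$. The infinite product $\prod_{j\ge k_n}d_n(j)^{-1}$ is also $1+O(q^L)$, because its factors tend to $1$ $q$-adically.

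Therefore
\[
\Phi[z_0,\ldots,z_n]=\sum_{0\le k_1\le\cdots\le k_n}q^{d(k_1+\cdots+k_n)}\bigl(1+O(q^L)\bigr).
\]
The main sum is the generating function of partitions into at most $n$ parts with parts weighted by $d$, which equals $1/(q^d;q^d)_n$. The error terms sum to $O(q^L)$ because the weights $q^{d\sum k_i}$ are power series with nonnegative exponents.

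\textbf{Main obstacle.} The only delicate step is the bookkeeping in the iterated product rule: each factor must receive a consecutive node string, with overlapping endpoints, so that the node multiplicities match the ranges $j\in[k_i,k_{i+1}]$ and the exponent comes out as $d\sum k_i$. The passage to the limit is routine once the $O(q^L)$ uniformity is noted.
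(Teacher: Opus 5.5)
Your proposal is correct and follows essentially the same route as the paper. Both proofs truncate to $\prod_{j\le N}f_j$, iterate the product rule, apply \eqref{eq:one-factor}, reindex the block decomposition by the weakly increasing jump positions $k_1\le\cdots\le k_n$, pass to the coefficientwise limit using the $N$-independent Lagrange denominators, and read off $1/(q^d;q^d)_n$ from $d_i(j)^{-1}=1+O(q^L)$. You are terser than the paper only on the convergence of the truncated right-hand side: terms with $k_n>N$ have order at least $d(N+1)$, and truncating the final product changes each term by $O(q^{L+d(N+1)})$. That step is routine.
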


\begin{proof}
	All limits in this proof are initially interpreted coefficientwise as
	formal power series in \(q\), and all order estimates refer to the
	\(q\)-adic order. 
	 Define 
	\[
	F_N(z):=\prod_{j=0}^{N}f_j(z),
	\]
	where $f_j(z)$ is defined by \eqref{a-1}. 
	Since \(\alpha_i\beta_i=q^c\) and \(z_i=\alpha_i+\beta_i\),
	\begin{align*}
		1-z_iq^{dj}+q^{c+2dj}
		&=1-(\alpha_i+\beta_i)q^{dj}
		+\alpha_i\beta_i q^{2dj}\nonumber \\
		&=(1-\alpha_iq^{dj})(1-\beta_iq^{dj})
		=d_i(j).
	\end{align*}
	In particular, \(f_j(z_i)=d_i(j)^{-1}\). Moreover,
	\[
	d_i(j)^{-1}
	=
	\frac{1}{
		(1-\zeta_iq^{r_i+dj})
		(1-\zeta_i^{-1}q^{c-r_i+dj})}
	=1+O(q^{L+dj}).
	\]
	Consequently, all evaluated factors belong to
	\(\mathbb{C}[[q]]\) and have constant term one, and their
	infinite products are well defined coefficientwise.
	
	We first establish the finite product identity. The divided difference
	product rule \eqref{eq:dd-product-rule} gives, for \(N\ge1\),
	\[
	F_N[z_0,\ldots,z_n]
	=
	\sum_{r=0}^{n}
	F_{N-1}[z_0,\ldots,z_r]\,
	f_N[z_r,\ldots,z_n].
	\]
	Iterating this identity yields
	\[
	F_N[z_0,\ldots,z_n]
	=
	\sum_{0=m_0\le m_1\le\cdots\le m_{N+1}=n}
	\prod_{j=0}^{N}
	f_j[z_{m_j},\ldots,z_{m_{j+1}}].
	\]
	Here the \(j\)-th factor appears as
	$
	f_j[z_{m_j},\ldots,z_{m_{j+1}}],
$
	which is a divided difference of order \(m_{j+1}-m_j\).
  Formula \eqref{eq:one-factor} gives
	\[
	f_j[z_{m_j},\ldots,z_{m_{j+1}}]
	=
	\frac{q^{dj(m_{j+1}-m_j)}}
	{\displaystyle\prod_{\ell=m_j}^{m_{j+1}}d_\ell(j)}.
	\]
	This formula includes \(m_j=m_{j+1}\), when the divided
	difference is simply the value \(f_j(z_{m_j})\). Hence
	\[
	F_N[z_0,\ldots,z_n]
	=
	\sum_{0=m_0\le\cdots\le m_{N+1}=n}
	q^{d\sum_{j=0}^{N}j(m_{j+1}-m_j)}
	\prod_{j=0}^{N}
	\prod_{\ell=m_j}^{m_{j+1}}d_\ell(j)^{-1}.
	\]
	
We first prove the assertion for \(n\ge1\); the case \(n=0\) will be
treated separately at the end of the proof.  To each sequence
	\((m_0,\ldots,m_{N+1})\), associate the weakly increasing
	sequence
	\[
	0\le k_1\le\cdots\le k_n\le N
	\]
	in which the integer \(j\) occurs exactly \(m_{j+1}-m_j\)
	times. Equivalently,
	\[
	k_r=j
	\quad\Longleftrightarrow\quad
	m_j<r\le m_{j+1}.
	\]
	This correspondence is bijective: its inverse is
	\[
	m_j=\#\{r:1\le r\le n,\ k_r<j\},
	\qquad 0\le j\le N+1.
	\]
	It follows immediately that
	\[
	\sum_{j=0}^{N}j(m_{j+1}-m_j)
	=\sum_{r=1}^{n}k_r.
	\]
	For the purpose of describing the denominator, set
	\(k_0=0\) and \(k_{n+1}=N\). For
	\(0\le\ell\le n\) and \(0\le j\le N\), we have
	\[
	m_j\le\ell\le m_{j+1}
	\quad\Longleftrightarrow\quad
	k_\ell\le j\le k_{\ell+1}.
	\]
	Indeed, the first inequality says that at most \(\ell\)
	entries of the sequence are less than \(j\), whereas the
	second says that at least \(\ell\) entries are at most \(j\).
	The endpoint cases are covered by the conventions for
	\(k_0\) and \(k_{n+1}\). Therefore
	\[
	\prod_{j=0}^{N}
	\prod_{\ell=m_j}^{m_{j+1}}d_\ell(j)^{-1}
	=
	\prod_{\ell=0}^{n}
	\prod_{j=k_\ell}^{k_{\ell+1}}d_\ell(j)^{-1}.
	\]
	In particular, repeated values among the \(k_r\) are allowed;
	they represent   several divided difference operations applied to the same factor. 
	
	For \(\mathbf{k}=(k_1,\ldots,k_n)\) with \(k_n\le N\), define
	\[
	\begin{aligned}
	G_N(\mathbf{k}):
		={}&
		\left(\prod_{j=0}^{k_1}d_0(j)^{-1}\right)
		\left(\prod_{i=1}^{n-1}
		\prod_{j=k_i}^{k_{i+1}}d_i(j)^{-1}\right) 
		\left(\prod_{j=k_n}^{N}d_n(j)^{-1}\right).
	\end{aligned}
	\]
	We have proved the exact finite identity
	\[
	F_N[z_0,\ldots,z_n]
	=
	\sum_{0\le k_1\le\cdots\le k_n\le N}
	q^{d(k_1+\cdots+k_n)}G_N(\mathbf{k}).
	\]
	
	We next justify passage to the infinite product, keeping
	\(n\), \(c\), \(d\), and the nodes fixed. For every \(i\),
	\[
	\prod_{j=N+1}^{\infty}d_i(j)^{-1}
	=1+O(q^{L+d(N+1)}),
	\]
	and therefore
	\[
	\begin{aligned}
		\Phi_{d,c}(z_i)-F_N(z_i)
		&=
		F_N(z_i)
		\left(
		\prod_{j=N+1}^{\infty}d_i(j)^{-1}-1
		\right)\\
		&=O(q^{L+d(N+1)}).
	\end{aligned}
	\]
	Since the nodes are distinct, the Laurent-series denominators
	\[
	V_i=\prod_{\substack{0\le\ell\le n\\\ell\ne i}}
	(z_i-z_\ell)
	\]
	are nonzero. Set
	\[
	v:=\max_{0\le i\le n}\{\operatorname{ord}_q V_i \}.
	\]
	The Lagrange formula for divided differences now gives
	\[
	\begin{aligned}
		\Phi_{d,c}[z_0,\ldots,z_n]-F_N[z_0,\ldots,z_n]
		&=
		\sum_{i=0}^{n}
		\frac{\Phi_{d,c}(z_i)-F_N(z_i)}{V_i}\\
		&=O(q^{L+d(N+1)-v}).
	\end{aligned}
	\]
	Since \(v\) is independent of \(N\), the last order tends
	to infinity. Thus the left-hand side of the finite identity
	converges coefficientwise to the required divided difference.
	
	For an arbitrary weakly increasing tuple \(\mathbf{k}\), put
	\[
	\begin{aligned}
		G(\mathbf{k}):
		={}&
		\left(\prod_{j=0}^{k_1}d_0(j)^{-1}\right)
		\left(\prod_{i=1}^{n-1}
		\prod_{j=k_i}^{k_{i+1}}d_i(j)^{-1}\right)
		\left(\prod_{j=k_n}^{\infty}d_n(j)^{-1}\right).
	\end{aligned}
	\]
	Every \(G(\mathbf{k})\) belongs to \(\mathbb{C}[[q]]\).
	For a fixed nonnegative integer \(M\), a summand can
	contribute to the coefficient of \(q^M\) only if
	\[
	d(k_1+\cdots+k_n)\le M.
	\]
	There are only finitely many such tuples. Hence
	\[
	\mathcal{S}
	=
	\sum_{0\le k_1\le\cdots\le k_n}
	q^{d(k_1+\cdots+k_n)}G(\mathbf{k})
	\]
	is a well-defined formal power series.
	
	Write
	\[
	\mathcal{S}_N
	=
	\sum_{0\le k_1\le\cdots\le k_n\le N}
	q^{d(k_1+\cdots+k_n)}G_N(\mathbf{k}).
	\]
	If \(k_n\le N\), then
	\[
	\begin{aligned}
		G(\mathbf{k})-G_N(\mathbf{k})
		&=G_N(\mathbf{k})
		\left(
		\prod_{j=N+1}^{\infty}d_n(j)^{-1}-1
		\right)\\
		&=O(q^{L+d(N+1)}).
	\end{aligned}
	\]
	If \(k_n>N\), then
	\[
	\operatorname{ord}_q
	\left(q^{d(k_1+\cdots+k_n)}G(\mathbf{k})\right)
	\ge d(N+1).
	\]
	Consequently,
	\[
	\begin{aligned}
		\mathcal{S}-\mathcal{S}_N
		={}&
		\sum_{0\le k_1\le\cdots\le k_n\le N}
		q^{d(k_1+\cdots+k_n)}
		\bigl(G(\mathbf{k})-G_N(\mathbf{k})\bigr)\\
		&+
		\sum_{\substack{0\le k_1\le\cdots\le k_n\\k_n>N}}
		q^{d(k_1+\cdots+k_n)}G(\mathbf{k})\\
		={}&O(q^{L+d(N+1)})+O(q^{d(N+1)})\\
		={}&O(q^{d(N+1)}).
	\end{aligned}
	\]
	Thus the right-hand side also converges coefficientwise.
	Taking limits in the finite identity proves
	\[
	\Phi_{d,c}[z_0,\ldots,z_n]
	=
	\sum_{0\le k_1\le\cdots\le k_n}
	q^{d(k_1+\cdots+k_n)}G(\mathbf{k}),
	\]
	which is precisely \eqref{eq:dd-expansion}. This representation also
	shows that all negative Laurent coefficients in the
	Lagrange expression cancel.
	
	It remains to prove \eqref{eq:dd-leading}. Since each factor is
	\(1+O(q^{L+dj})\), the finite and infinite products defining
	\(G(\mathbf{k})\) satisfy
	\[
	G(\mathbf{k})=1+O(q^L).
	\]
	Using the coefficientwise finiteness established above, we obtain
	\[
	\begin{aligned}
		\Phi_{d,c}[z_0,\ldots,z_n]
		 -
		\sum_{0\le k_1\le\cdots\le k_n}
		q^{d(k_1+\cdots+k_n)}&
		=
		\sum_{0\le k_1\le\cdots\le k_n}
		q^{d(k_1+\cdots+k_n)}
		\bigl(G(\mathbf{k})-1\bigr)\\
		&
		=O(q^L).
	\end{aligned}
	\]
	To evaluate the remaining sum, set
	\[
	h_1=k_1,\qquad h_r=k_r-k_{r-1}\quad(2\le r\le n).
	\]
	This is a bijection with \(h_1,\ldots,h_n\ge0\), and
	\[
	k_1+\cdots+k_n
	=nh_1+(n-1)h_2+\cdots+h_n.
	\]
	Hence
	\[
	\begin{aligned}
		\sum_{0\le k_1\le\cdots\le k_n}
		q^{d(k_1+\cdots+k_n)}
		&=
		\sum_{h_1,\ldots,h_n\ge0}
		q^{d(nh_1+(n-1)h_2+\cdots+h_n)}\\
		&=
		\prod_{r=1}^{n}
		\left(\sum_{h\ge0}q^{drh}\right)\\
		&=
		\prod_{r=1}^{n}\frac{1}{1-q^{dr}}
		=\frac{1}{(q^d;q^d)_n}.
	\end{aligned}
	\]
	It follows that
	\[
	\Phi_{d,c}[z_0,\ldots,z_n]
	=\frac{1}{(q^d;q^d)_n}+O(q^L).
	\]
	
	Finally, when \(n=0\), the divided difference is the
	single evaluated product:
	\[
	\Phi_{d,c}[z_0]
	=\Phi_{d,c}(z_0)
	=\prod_{j=0}^{\infty}d_0(j)^{-1}
	=1+O(q^L)
	=\frac{1}{(q^d;q^d)_0}+O(q^L).
	\]
	This proves the asserted estimate also at order zero.
	
	For fixed \(0<|q|<1\), provided that the evaluated denominators and
	the node differences are nonzero, the expansion is also valid
	analytically. Indeed, the factors \(G_N(\mathbf{k})\) and
	\(G(\mathbf{k})\) are uniformly bounded, while
	\[
	\sum_{0\le k_1\le\cdots\le k_n}
	|q|^{d(k_1+\cdots+k_n)}
	=
	\prod_{r=1}^{n}\frac{1}{1-|q|^{dr}}
	<\infty.
	\]
	The result therefore follows by dominated convergence.
	\end{proof}

The notation $\operatorname{Res}_{t=t_0}f(t)$ means the coefficient of
$(t-t_0)^{-1}$ in the Laurent expansion of $f$.
If $z=z(t)$ is locally analytic and $z'(t_0)\ne0$, then
\begin{equation}\label{eq:residue-change}
 \operatorname{Res}_{t=t_0}\bigl(H(z(t))z'(t)\bigr)
 =\operatorname{Res}_{z=z(t_0)}H(z).
\end{equation}
In particular, if
\[
H(z)=\frac{\Phi_{d,c}(z)}{\prod_i(z-z_i)},
\]
then the pullback differential
\[
H(z(t))z'(t)\,dt
\]
has, at each simple preimage of a node, the corresponding
Lagrange summand as its residue.

\begin{lemma} \label{lem:finite-residue}
Fix $0<q<1$ and an integer $d\ge1$. Let $\Lambda\subset \mathbb C\setminus \{0\}$
be invariant under multiplication by $q^d$ and its inverse. Suppose
that $\Psi$ is meromorphic on $  \mathbb C\setminus \{0\}$, has no pole at a
point of $\Lambda$, and that
$\sigma(t)=-q^d\Psi(q^dt)/\Psi(t)$ is rational. For rational $R(t)$, put
\[
 \mathcal L_\Lambda(R):=\frac12\sum_{t_0\in\Lambda}
           \operatorname{Res}_{t=t_0}\bigl(\Psi(t)R(t)\bigr).
\]
Then the sum has finite support and
\begin{equation}\label{eq:general-residue-shift}
 \mathcal L_\Lambda\bigl(R(t)+\sigma(t)R(q^dt)\bigr)=0.
\end{equation}
\end{lemma}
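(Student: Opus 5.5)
The plan is to rewrite the second summand using the functional equation that defines $\sigma$, and then to see it as a $q^d$-rescaled copy of the first summand. Reindexing over the shift-invariant set $\Lambda$ will then show that the two contributions to $\mathcal L_\Lambda$ are equal and opposite. The only real care needed is to check finite support first, so that each sum is a finite sum and the reindexing is legitimate.

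\emph{Step 1 (finite support).} Since $\Psi$ has no pole on $\Lambda$, a point $t_0\in\Lambda$ can give a nonzero residue of $\Psi R$ only if $t_0$ is a pole of $R$. A rational function has finitely many poles, so $\mathcal L_\Lambda(R)$ is a finite sum.

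For the second summand, the definition of $\sigma$ gives
\[
\sigma(t)\Psi(t)=-q^d\Psi(q^dt)
\]
as meromorphic functions on $\mathbb C\setminus\{0\}$. Hence
\[
\Psi(t)\sigma(t)R(q^dt)=-q^d\,\Psi(q^dt)R(q^dt).
\]
If $t_0\in\Lambda$, then $q^dt_0\in\Lambda$, so $\Psi(q^dt)$ is holomorphic near $t_0$. A pole at $t_0$ can therefore only come from a pole of $R$ at $q^dt_0$, which again leaves finitely many points. By linearity, $\mathcal L_\Lambda\bigl(R(t)+\sigma(t)R(q^dt)\bigr)$ is the sum of two finite sums.

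\emph{Step 2 (change of variables).} Put $g(u)=\Psi(u)R(u)$. The linear substitution $u=q^dt$ (the case $z(t)=q^dt$ of \eqref{eq:residue-change}) gives
\[
\operatorname{Res}_{t=t_0}\bigl(q^d g(q^dt)\bigr)=\operatorname{Res}_{u=q^dt_0}g(u).
\]
Consequently
\[
\sum_{t_0\in\Lambda}\operatorname{Res}_{t=t_0}\bigl(\Psi(t)\sigma(t)R(q^dt)\bigr)
=-\sum_{t_0\in\Lambda}\operatorname{Res}_{u=q^dt_0}g(u)
=-\sum_{u_0\in\Lambda}\operatorname{Res}_{u=u_0}g(u).
\]
The last equality holds because $t_0\mapsto q^dt_0$ is a bijection of $\Lambda$ onto itself. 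This is where invariance under both $q^d$ and $q^{-d}$ is used. Because both sums are finite, the reindexing needs no convergence argument.

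\emph{Step 3 (conclusion).} Adding the result of Step 2 to $\sum_{t_0\in\Lambda}\operatorname{Res}_{t=t_0}(\Psi R)$ gives zero, and multiplying by $\tfrac12$ yields \eqref{eq:general-residue-shift}. Rationality of $\sigma$ is needed only to ensure that $\sigma(t)R(q^dt)$ is again rational, so that the left side of \eqref{eq:general-residue-shift} is defined within the same framework. The step that requires the most care is Step 1: one must check that multiplying by $\sigma$ does not create poles on $\Lambda$, and the identity $\sigma\Psi=-q^d\Psi(q^d\cdot)$ together with the invariance of $\Lambda$ settles this.
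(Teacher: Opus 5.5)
Your proposal is correct and follows essentially the same route as the paper. Both arguments get finite support from the finitely many poles of $R$ on $\Lambda$, rewrite the second summand through the meromorphic identity $\Psi(t)\sigma(t)R(q^dt)=-q^d\Psi(q^dt)R(q^dt)$, apply the residue-preserving substitution $u=q^dt$, and reindex along the bijection $t_0\mapsto q^dt_0$ of $\Lambda$. The only cosmetic differences are that the paper checks the residue change by an explicit Laurent expansion instead of citing \eqref{eq:residue-change}, and gets finite support for the second summand from the rationality of $\sigma(t)R(q^dt)$ rather than from your direct pole analysis.
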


\begin{proof}
	Fix a rational function \(R(t)\). We first verify that the
	sum defining \(\mathcal{L}_{\Lambda}(R)\) has finite support.
	Let
	\[
	P_R=\{t_0\in\Lambda:R \text{ has a pole at } t_0\}.
	\]
	Since \(R\) is rational, the set \(P_R\) is finite. At every
	point \(t_0\in\Lambda\setminus P_R\), both \(R\) and \(\Psi\)
	are holomorphic. Their product is therefore holomorphic
	there, and
	\[
	\operatorname{Res}_{t=t_0}\bigl(\Psi(t)R(t)\bigr)=0.
	\]
	Consequently,
	\[
	\mathcal{L}_{\Lambda}(R)
	=
	\frac12\sum_{t_0\in P_R}
	\operatorname{Res}_{t=t_0}\bigl(\Psi(t)R(t)\bigr),
	\]
	which is a finite sum. The same argument applies to every
	rational function, so \(\mathcal{L}_{\Lambda}\) is a
	well-defined linear functional on the space of rational
	functions.
	
	Define
	\[
	F(t):=\Psi(t)R(t),
	\qquad
	\widetilde R(t):=\sigma(t)R(q^dt).
	\]
	Because \(\sigma\) is rational, \(\widetilde R\) is rational
	as well. Hence the residue sums associated with
	\(\widetilde R\) and \(R+\widetilde R\) also have finite
	support. By the definition of \(\sigma\),
	\[
		\Psi(t)\widetilde R(t)
		=\Psi(t)\sigma(t)R(q^dt)
		=-q^d\Psi(q^dt)R(q^dt)
		=-q^dF(q^dt).
	\]
	This is an identity of meromorphic functions; in particular,
	it remains valid meromorphically at zeros of \(\Psi\).
	
	We now verify explicitly how the local residue changes
	under \(u=q^dt\). Fix \(t_0\in\Lambda\), and put
	$
	u_0=q^dt_0.
$
	The invariance of \(\Lambda\) implies that \(u_0\in\Lambda\).
	Since \(F\) is meromorphic near \(u_0\), it has a convergent
	Laurent expansion
	\[
	F(u)=\sum_{\nu=-M}^{\infty}a_\nu(u-u_0)^\nu,
	\qquad 0<|u-u_0|<\varepsilon,
	\]
	for some integer \(M\ge0\). Here \(a_{-1}\) is understood
	to be zero if \(M=0\). As
	\[
	q^dt-u_0=q^d(t-t_0),
	\]
	substitution gives
	\[
	\begin{aligned}
		q^dF(q^dt)
		&=q^d\sum_{\nu=-M}^{\infty}
		a_\nu\bigl(q^d(t-t_0)\bigr)^\nu=\sum_{\nu=-M}^{\infty}
		a_\nu q^{d(\nu+1)}(t-t_0)^\nu.
	\end{aligned}
	\]
	The coefficient of \((t-t_0)^{-1}\) in this expansion is
	\(a_{-1}q^{d(-1+1)}=a_{-1}\). Therefore
	\[
	\operatorname{Res}_{t=t_0}\bigl(q^dF(q^dt)\bigr)
	=
	a_{-1}
	=
	\operatorname{Res}_{u=u_0}F(u).
	\]
	Combining this equality with the meromorphic identity above
	yields
	\[
	\operatorname{Res}_{t=t_0}
	\bigl(\Psi(t)\widetilde R(t)\bigr)
	=
	-\operatorname{Res}_{u=q^dt_0}F(u).
	\]
	
	The assumptions on \(\Lambda\) give
$
	q^d\Lambda=\Lambda,
$
	so the map \(t_0\mapsto q^dt_0\) is a bijection from
	\(\Lambda\) to itself, with inverse \(u_0\mapsto q^{-d}u_0\).
	We may therefore reindex the finite-support residue sum:
	\[
	\begin{aligned}
		\mathcal{L}_{\Lambda}(\widetilde R)
		&=\frac12\sum_{t_0\in\Lambda}
		\operatorname{Res}_{t=t_0}
		\bigl(\Psi(t)\widetilde R(t)\bigr)\\
		&=-\frac12\sum_{t_0\in\Lambda}
		\operatorname{Res}_{u=q^dt_0}F(u)\\
		&=-\frac12\sum_{u_0\in\Lambda}
		\operatorname{Res}_{u=u_0}F(u)\\
		&=-\frac12\sum_{u_0\in\Lambda}
		\operatorname{Res}_{u=u_0}\bigl(\Psi(u)R(u)\bigr)\\
		&=-\mathcal{L}_{\Lambda}(R).
	\end{aligned}
	\]
	Finally, linearity gives
	\[
	\begin{aligned}
		\mathcal{L}_{\Lambda}
		\bigl(R(t)+\sigma(t)R(q^dt)\bigr)
		&=\mathcal{L}_{\Lambda}(R)
		+\mathcal{L}_{\Lambda}(\widetilde R)=0,
	\end{aligned}
	\]
	which proves \eqref{eq:general-residue-shift}.
\end{proof}

%\begin{proof}
%Only the finitely many nonzero finite poles of $R$ can contribute to
%the first sum. The second integrand is
%$-q^d\Psi(q^dt)R(q^dt)$. The substitution $u=q^dt$ identifies its
%residue at $t_0$ with the negative of the first integrand's residue at
%$q^dt_0$. Invariance of $\Lambda$ permits reindexing the finite sum.
%The argument allows poles of arbitrary finite order. No limit of an
%infinite contour integral is involved.
%\end{proof}

\begin{lemma}\label{lem:boundary-unique}
Suppose two sequences of $m$-component formal power-series vectors
$\mathbf U_n,\mathbf V_n$ satisfy the same recurrence
\[
 \mathbf U_n=M_n\mathbf U_{n+1},\qquad
 \mathbf V_n=M_n\mathbf V_{n+1},
 \qquad M_n\in\operatorname{Mat}_m(\mathbb C[[q]]),
\]
where  
 $ \operatorname{Mat}_m\bigl(\mathbb C[[q]]\bigr) $
 denotes  the ring of all \(m\times m\) matrices whose entries belong to $\mathbb C[[q]]$. 
If $\mathbf U_n-\mathbf V_n=O(q^{e_n})$ with $e_n\to\infty$, then
$\mathbf U_n=\mathbf V_n$ for every $n$.
\end{lemma}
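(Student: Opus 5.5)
Set $\mathbf W_n:=\mathbf U_n-\mathbf V_n$. Subtracting the two recurrences and using linearity of matrix multiplication over $\mathbb C[[q]]$ gives
\[
\mathbf W_n=M_n\mathbf W_{n+1}\qquad(n\ge0).
\]
Iterating this $k$ times for any fixed $n$ and any $k\ge1$ yields
\[
\mathbf W_n=M_nM_{n+1}\cdots M_{n+k-1}\,\mathbf W_{n+k}.
\]
No inverses of the $M_j$ are needed; only forward products occur.

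The key observation is that multiplication by a matrix with entries in $\mathbb C[[q]]$ cannot lower $q$-adic order. Each entry of $M\mathbf W$ is a finite sum of products $m_{ij}w_j$ with $\operatorname{ord}_q m_{ij}\ge0$, so $\operatorname{ord}_q(M\mathbf W)\ge\operatorname{ord}_q\mathbf W$ componentwise, in the sense of the minimum over components. The product $M_n\cdots M_{n+k-1}$ again lies in $\operatorname{Mat}_m(\mathbb C[[q]])$. Hence
\[
\mathbf W_n=O(q^{e_{n+k}})\qquad\text{for every }k\ge1.
\]

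Now fix $n$ and any integer $N$. Since $e_{n+k}\to\infty$ as $k\to\infty$, we can choose $k$ with $e_{n+k}>N$. Then every coefficient of $q^j$ with $j\le N$ in every component of $\mathbf W_n$ vanishes. Because $N$ was arbitrary, $\mathbf W_n=0$, that is, $\mathbf U_n=\mathbf V_n$.

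There is no real obstacle here. The only point requiring care is that the entries of $M_n$ have no negative powers of $q$. If they were Laurent series, the order could drop by an amount that accumulates with $k$, and the argument would fail. This is exactly why the hypothesis places $M_n$ in $\operatorname{Mat}_m(\mathbb C[[q]])$. It is also why, later in the paper, the recurrences must be normalized so that the transfer matrices are power series.
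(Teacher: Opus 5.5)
Your proof is correct and is essentially the paper's argument: iterate $\mathbf U_n-\mathbf V_n=M_nM_{n+1}\cdots M_{N-1}(\mathbf U_N-\mathbf V_N)$, observe that matrices over $\mathbb C[[q]]$ cannot lower $q$-adic order, and let $e_N\to\infty$. One small correction to your closing aside: in the Li--Wang and fifth-dual sections the paper does not normalize to power-series matrices. Its companion matrices there have entries in $q^{-1}\mathbb C[[q]]$, and it uses a quantitative variant of this lemma, comparing an order loss of at most $N-n$ against boundary order $2N+2$.
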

\begin{proof}
For fixed $n$ and every $N>n$,
\[
 \mathbf U_n-\mathbf V_n
 =M_nM_{n+1}\cdots M_{N-1}(\mathbf U_N-\mathbf V_N).
\]
Multiplication by these matrices cannot lower formal order. The left
side therefore belongs to $q^{e_N}\mathbb C[[q]]^m$ for every $N$.
Every coefficient vanishes by choosing $N$ sufficiently large.
\end{proof}

\begin{remark}
When $\sum_{j=0}^k a_{n,j}u_{n+j}=0$ has all coefficients in
$\mathbb C[[q]]$ and $a_{n,0}$ is a unit, its companion matrix also
has entries in that ring. The lemma consequently applies to such
scalar recurrences. In the residue arguments we first fix real
$0<q<1$. For each fixed order, the Lagrange representation is meromorphic
near $q=0$ with at most a finite-order pole. Lemma~\ref{lem:dd-expansion}
removes its negative Laurent coefficients. Analytic equality for real
$q$ then yields equality of Taylor series, making the formal uniqueness
argument applicable. 
\end{remark}

\section{The first three identities: sum and product sequences}\label{sec:first-three}

\subsection{Recurrences of the   three  sums }

Define
\[
 U_n:=S(3n,6n),\quad V_n:=S(3n+1,6n+3),\quad W_n:=S(3n+2,6n+3).
\]
Applying \eqref{2-1} and \eqref{2-2} gives
\begin{equation}
 \begin{aligned}
 V_n-W_n&=q^{3n+2}U_{n+1},\\
 (1+q^{3n+1})V_n-U_n&=q^{9n+7}W_{n+1},\\
 W_n&=U_{n+1}+q^{3n+3}V_{n+1}+q^{6n+6}W_{n+1}.
 \end{aligned}                                                    \label{a:eq:3}
\end{equation}
Indeed, setting \((a,b)=(3n+1,6n+3)\) in~(2.1) gives the
first equation of~(3.1). Next, by~(2.1) and~(2.2),
\[
\begin{aligned}
	U_n-V_n
	&=S(3n,6n)-S(3n+1,6n+3)\\
	&=\bigl(S(3n,6n)-S(3n+1,6n)\bigr)\\
	&\quad+\bigl(S(3n+1,6n)-S(3n+1,6n+3)\bigr)\\
	&=q^{3n+1}W_n
	+q^{6n+3}S(3n+4,6n+6).
\end{aligned}
\]
Combining this identity with the first equation of~(3.1), and then
applying~(2.1), we obtain
\[
\begin{aligned}
	(1+q^{3n+1})V_n-U_n
	&=q^{3n+1}(V_n-W_n)
	-q^{6n+3}S(3n+4,6n+6)\\
	&=q^{6n+3}\bigl(
	S(3n+3,6n+6)-S(3n+4,6n+6)
	\bigr)\\
	&=q^{9n+7}S(3n+5,6n+9)\\
	&=q^{9n+7}W_{n+1}.
\end{aligned}
\]
This  proves the second equation of \eqref{a:eq:3}. 

To verify the third equation, we first split the difference and apply
\eqref{2-1} and \eqref{2-2}:
\[
\begin{aligned}
	W_n-U_{n+1}
	&=S(3n+2,6n+3)-S(3n+3,6n+6)\\
	&=q^{3n+3}S(3n+4,6n+6)
	+q^{6n+6}S(3n+6,6n+9).
\end{aligned}
\]
Moreover,  \eqref{2-2} and \eqref{2-1}, respectively, give
\[
\begin{aligned}
	S(3n+4,6n+6)-S(3n+4,6n+9)
	&=q^{6n+9}S(3n+7,6n+12),\\
	S(3n+5,6n+9)-S(3n+6,6n+9)
	&=q^{3n+6}S(3n+7,6n+12).
\end{aligned}
\]
Multiplying the first identity by \(q^{3n+3}\) and the second by
\(q^{6n+6}\), we see that the two   terms are both
$
q^{9n+12}S(3n+7,6n+12).
$
Consequently,
\[
\begin{aligned}
	W_n-U_{n+1}
	&=q^{3n+3}S(3n+4,6n+9)
	+q^{6n+6}S(3n+5,6n+9)\\
	&=q^{3n+3}V_{n+1}+q^{6n+6}W_{n+1},
\end{aligned}
\]
which is the third equation of \eqref{a:eq:3}. 

Equivalently, with column vectors,
\begin{equation}
 \begin{gathered}
 \begin{pmatrix}U_n\\V_n\\W_n\end{pmatrix}
 =M_n\begin{pmatrix}U_{n+1}\\V_{n+1}\\W_{n+1}\end{pmatrix},
 \end{gathered}  \label{a:eq:4}
\end{equation}
where
\[
M_n=
\begin{pmatrix}
	(1+q^{3n+1})(1+q^{3n+2})&
	q^{3n+3}(1+q^{3n+1})&q^{6n+6}\\
	1+q^{3n+2}&q^{3n+3}&q^{6n+6}\\
	1&q^{3n+3}&q^{6n+6}
\end{pmatrix}.
\]
All entries of $M_n$ lie in \(\mathbb Z[q]\), with no negative powers. Also
\begin{equation}
 U_n=1+O(q^{3n+1}),\quad
 V_n=1+O(q^{3n+2}),\quad
 W_n=1+O(q^{3n+3}).                                               \label{a:eq:5}
\end{equation}

Lemma~\ref{lem:boundary-unique} applies to this recurrence: it is enough
to construct a second sequence with the same matrix and the boundary
$(1,1,1)^{\mathsf T}$, with an error whose formal order tends to infinity.

\subsection{Recurrences of the  product sequences}
Put
\[
 \mathcal E_9:=\frac{(q^9;q^9)_\infty}{(q^3;q^3)_\infty}
 =\frac1{(q^3,q^6;q^9)_\infty},\qquad
 z_r^{(c)}:=q^r+q^{c-r}.
\]
The kernel $\Phi_{9,c}$ is the specialization $d=9$ of
\eqref{eq:kernel}. By \eqref{2-9}, 
\[
\Phi_{9,c}(z_r^{(c)})=(q^r,q^{c-r};q^9)_\infty^{-1}.
\]
For distinct nodes, use ordinary divided differences:
\begin{equation}
 \mathcal D_c(r_0,\ldots,r_n):
 =\Phi_{9,c}[z_{r_0}^{(c)},\ldots,z_{r_n}^{(c)}]
 =\sum_{j=0}^n
 \frac{\Phi_{9,c}(z_{r_j}^{(c)})}
 {\prod_{k\ne j}(z_{r_j}^{(c)}-z_{r_k}^{(c)})}.                    \label{a:eq:6}
\end{equation}

Define \(C_n\) for \(n\ge0\), and \(A_n\) for \(n\ge1\), by
\begin{equation}
 C_n:=\mathcal E_9(q^3;q^3)_n \mathcal D_{9n+9}(3n+4,3n+7,\ldots,6n+4),
                                                               \label{a:eq:7}
\end{equation}
and 
\begin{equation}
 A_n:=\mathcal E_9(q^3;q^3)_n(1-q^{3n-1})
 \mathcal D_{9n}(3n-1,3n+2,\ldots,6n-1).
                                                          \label{a:eq:8}
\end{equation}
Each divided difference has \(n+1\) nodes. They are distinct. For \eqref{a:eq:7}, equality
of two nodes with unequal indices would require \(3(j+k)=3n+1\); for \eqref{a:eq:8} it
would require \(3(j+k)=3n+2\). Both are impossible.

To verify this assertion, use
\[
 z_r^{(c)}-z_s^{(c)}
 =(q^r-q^s)(1-q^{c-r-s}).
\]
For \(0<|q|<1\), the first factor is nonzero when \(r\ne s\), and
the second can vanish only if \(r+s=c\). The same congruence calculation,
allowing \(j=k\), proves \(2r_j\ne c\); this also ensures that the change
of variable in the residue proof has nonzero derivative at each preimage.

Although \eqref{a:eq:8} is stated only for \(n\ge1\), its naturally cancelled
specialization at \(n=0\) is
\[
\mathcal E_9 \frac{1-q^{-1}}{(q^{-1},q;q^9)_\infty}
=\frac{\mathcal E_9}{(q,q^8;q^9)_\infty}
=P_1.
\]
This motivates the separate definition
\[A_0:=P_1,\]
 avoiding
negative powers of \(q\)  in the
general formulas.

Set
\begin{equation}
  B_n:=C_n+q^{3n+2}A_{n+1}, \qquad n\geq 0.                          \label{a:eq:9}
\end{equation}
The relevant initial values are 
\begin{equation}
 C_0=P_3,\qquad
 A_1=\frac{P_2-P_3}{q^2},\qquad
 C_1=\frac{(1+q)P_2-P_1}{q^7}.
                                                                    \label{a:eq:10}
\end{equation}
For \(A_1\), use
\[
 z_2^{(9)}-z_4^{(9)}=q^2(1-q^2)(1-q^3)
\]
and the symmetry \(z_5^{(9)}=z_4^{(9)}\). For \(C_1\), the two product
values are
\[
 \frac{1-q^2}{\mathcal E_9}P_2,\qquad \frac{1-q}{\mathcal E_9}P_1
\]
at \(z_7^{(18)}\) and \(z_{10}^{(18)}=z_8^{(18)}\), and their node
difference is \(q^7(1-q)(1-q^3)\).
In particular, 
 \[B_0=P_2.\]

It remains to prove two recurrences:
\begin{equation}
 \begin{aligned}
 C_n-A_{n+1}
 -q^{3n+3}(1+q^{3n+3})C_{n+1}
 -q^{6n+8}A_{n+2}&=0 \quad(n\ge0),\\
 A_n-(1+q^{3n+1})C_n
 -q^{3n+2}(1+q^{3n+1})A_{n+1}
 +q^{9n+7}C_{n+1}&=0 \quad(n\ge1).
 \end{aligned}                                                    \label{a:eq:11}
\end{equation}
The second recurrence in \eqref{a:eq:11} at \(n=0\) follows directly
from \eqref{a:eq:10}. Together with~\eqref{a:eq:9}, the recurrences in~\eqref{a:eq:11} are
equivalent to the scalar system~ \eqref{a:eq:3}, with
\((U_n,V_n,W_n)\) replaced by \((A_n,B_n,C_n)\). Hence the vector
$
(A_n,B_n,C_n)^{T}
$
satisfies the matrix recurrence~\eqref{a:eq:4}.

\section{Proofs of the first three identities}\label{sec:first-proof}
\subsection{The residue representations}\label{a:sec:residue}

Write
\[
 \Psi_9(t):=\frac1{(t,q^9/t;q^9)_\infty}.
\]
Its elementary shift relation is
\begin{equation}
 \Psi_9(q^9t)=-t\Psi_9(t).                                               \label{a:eq:12}
\end{equation}
Initially this calculation is made away from zeros and poles; it is then
an identity of meromorphic functions on \( \mathbb C\setminus \{0\}\).
The two products in the denominator converge locally uniformly
on \(  \mathbb C\setminus \{0\}\). Their reciprocal therefore defines a
meromorphic function there. 
For a rational function \(R(t)\), define
\begin{equation}
 \mathcal L_9(R):=\frac12
 \sum_{\substack{r\in\mathbb Z\\3\nmid r}}
 \operatorname {Res}_{t=q^r}\bigl(\Psi_9(t)R(t)\bigr).
   \label{a:eq:13}
\end{equation}
This sum has only finitely many nonzero terms: \(\Psi_9\) has poles only at
\(t=q^{9k}\), none of which belongs to the selected set, and a rational
function has only finitely many poles away from 0 and infinity.

\begin{lemma}\label{a:lem:shift}
For every rational function \(R(t)\),
\begin{equation}
\mathcal L_9\bigl(R(t)+q^9tR(q^9t)\bigr)=0.                     \label{a:eq:14}
\end{equation}
\end{lemma}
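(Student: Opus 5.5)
This is a direct specialization of Lemma~\ref{lem:finite-residue}. We take $d=9$, $\Psi=\Psi_9$, and $\Lambda=\{q^r: r\in\mathbb Z,\ 3\nmid r\}$. With these choices, $\mathcal L_\Lambda$ coincides with $\mathcal L_9$ as defined in \eqref{a:eq:13}. We first fix real $0<q<1$, as that lemma requires. Then we check its hypotheses one at a time.

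\emph{Invariance of $\Lambda$.} Multiplying by $q^{\pm9}$ sends $q^r$ to $q^{r\pm9}$, and $3\nmid r$ holds exactly when $3\nmid r\pm9$. So $q^9\Lambda=\Lambda$. Since $0<q<1$, the points $q^r$ are distinct for distinct $r$, so this indexing of $\Lambda$ by $r$ is faithful, and the residue sum in \eqref{a:eq:13} is literally the sum over $\Lambda$.

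\emph{$\Psi_9$ is meromorphic on $\mathbb C\setminus\{0\}$ with no pole on $\Lambda$.} The factors $(t;q^9)_\infty$ and $(q^9/t;q^9)_\infty$ are holomorphic on $\mathbb C\setminus\{0\}$ and vanish only at $t=q^{-9k}$ ($k\ge0$) and at $t=q^{9(k+1)}$ ($k\ge0$). Thus the poles of $\Psi_9$ lie in $\{q^{9k}:k\in\mathbb Z\}$, and $3\mid 9k$ keeps every one of them outside $\Lambda$.

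\emph{The shift factor $\sigma$ is rational.} This is the only computational step. We verify \eqref{a:eq:12} by writing
\[
(q^9t;q^9)_\infty=\frac{(t;q^9)_\infty}{1-t},\qquad (t^{-1};q^9)_\infty=(1-t^{-1})(q^9/t;q^9)_\infty .
\]
The product of these gives $(q^9t,t^{-1};q^9)_\infty=(t,q^9/t;q^9)_\infty\cdot\frac{1-t^{-1}}{1-t}=-t^{-1}(t,q^9/t;q^9)_\infty$. Taking reciprocals yields $\Psi_9(q^9t)=-t\Psi_9(t)$. Consequently
\[
\sigma(t)=-q^9\Psi_9(q^9t)/\Psi_9(t)=q^9t,
\]
which is rational. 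This identity is first an identity away from zeros and poles, and it then extends meromorphically.

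With the hypotheses in place, \eqref{eq:general-residue-shift} reads $\mathcal L_9\bigl(R(t)+q^9tR(q^9t)\bigr)=0$, which is \eqref{a:eq:14}. Finite support of the sum is also part of that lemma.

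The only subtle point is the sign in $\sigma$, and the computation above settles it. If one also wants complex $q$ with $0<|q|<1$, the same argument applies verbatim, since distinctness of the $q^r$ and the pole locations are unchanged.
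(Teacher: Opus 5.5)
Your proof is correct and follows the paper's route. The paper also obtains \eqref{a:eq:14} as the case $d=9$ of Lemma~\ref{lem:finite-residue}: it uses the shift relation \eqref{a:eq:12} to get $\sigma(t)=q^9t$, together with the invariance $q^9\Lambda_9=\Lambda_9$ and the fact that the poles of $\Psi_9$ lie off $\Lambda_9$. Your explicit derivation of \eqref{a:eq:12} and your location of the poles of $\Psi_9$ simply fill in details the paper states in passing.
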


\begin{proof}
This is Lemma~\ref{lem:finite-residue} with $d=9$. Explicitly, equation \eqref{a:eq:12} turns the second summand's integrand into
\(-q^9\Psi_9(q^9t)R(q^9t)\). The substitution \(u=q^9t\) shifts the residue at \(q^r\)
to the residue at \(q^{r+9}\), with a minus sign. The selected index set is
invariant under this shift, and the sum is finite.

Only residues at points of
\[
\Lambda_9:=\{q^r:r\in\mathbb Z,\ 3\nmid r\}
\]
are included in \(L_9\). The proof uses no contour deformation:
equation~\eqref{a:eq:14} follows solely from the local substitution
\(u=q^9t\) and the invariance \(q^9\Lambda_9=\Lambda_9\).
Thus poles of \(R\) outside \(\Lambda_9\), including those at zero
and infinity, play no role. Poles at points of \(\Lambda_9\) may
have arbitrary finite order.
\end{proof}

Define the rational common kernel
\begin{equation}
 \mathcal K_n(t):=
 \frac{(q^9/t;q^9)_n}
 {t^{n+1}(q^{3n+4}/t,q^{3n+5}/t;q^3)_{n+1}}.                        \label{a:eq:15}
\end{equation}
\begin{lemma} \label{a:lem:residue}
The divided difference \eqref{a:eq:7} has the residue representation
\begin{equation}
 C_n=\mathcal E_9(q^3;q^3)_n\,
 \mathcal L_9\left((1-q^{9n+9}/t^2)\mathcal K_n(t)\right).                     \label{a:eq:16}
\end{equation}
For \(n\ge1\), similarly,
\begin{equation}
 A_n=\mathcal E_9(q^3;q^3)_n\,
 \mathcal L_9\left(
 \frac{(1-q^{3n-1})(1-q^{9n}/t^2)(q^9/t;q^9)_{n-1}}
 {t^{n+1}(q^{3n-1}/t,q^{3n+1}/t;q^3)_{n+1}}
 \right).                                                        \label{a:eq:17}
\end{equation}

\end{lemma}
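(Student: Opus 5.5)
The plan is to identify, node by node, the residues of $\Psi_9(t)R(t)$ at the points of $\Lambda_9$ with the Lagrange summands of the divided difference \eqref{a:eq:6}, using the change of variables \eqref{eq:residue-change} with $z=t+q^{c}/t$. We treat $C_n$ first, with $c=9n+9$. Put
\[
H(z)=\frac{\Phi_{9,c}(z)}{\prod_{k=0}^n (z-z_{r_k}^{(c)})},\qquad r_k=3n+4+3k .
\]
By \eqref{2-9}, $\Phi_{9,c}(t+q^c/t)=1/(t,q^c/t;q^9)_\infty$. We compare this with $\Psi_9(t)=1/(t,q^9/t;q^9)_\infty$. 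Since $q^c/t=q^{9n+9}/t$, the finite string $(q^9/t;q^9)_n$ accounts for the extra factors:
\[
\frac{1}{(q^{9n+9}/t;q^9)_\infty}=\frac{(q^9/t;q^9)_n}{(q^9/t;q^9)_\infty},
\qquad\text{so}\qquad
\Phi_{9,c}(t+q^c/t)=\Psi_9(t)\,(q^9/t;q^9)_n .
\]
We also have $z'(t)=1-q^{c}/t^2$. Finally, each factor $z-z_{r_k}^{(c)}$ becomes
\[
t+q^c/t-q^{r_k}-q^{c-r_k}=t\,(1-q^{r_k}/t)(1-q^{c-r_k}/t).
\]
The product over $k$ therefore equals $t^{n+1}$ times the product of $(1-q^{r}/t)$ over the $2n+2$ exponents $r\in\{3n+4,\dots,6n+4\}\cup\{3n+5,\dots,6n+5\}$. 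This is exactly $t^{n+1}(q^{3n+4}/t,q^{3n+5}/t;q^3)_{n+1}$. Hence $H(z(t))z'(t)=\Psi_9(t)(1-q^{9n+9}/t^2)\mathcal K_n(t)$, which matches the integrand in \eqref{a:eq:16}.

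Next comes the bookkeeping of poles. The rational factor has simple poles only at $t=q^{r}$ with $r\equiv 1,2\pmod 3$ in the two strings above, and all of these lie in $\Lambda_9$. Every node $z_{r_k}$ has exactly two preimages, $t=q^{r_k}$ and $t=q^{c-r_k}$. Both are simple, because $2r_k\neq c$ as shown after \eqref{a:eq:8}, so $z'\neq0$ there, and distinct nodes give disjoint preimages. By \eqref{eq:residue-change}, each preimage contributes the residue of $H$ at $z_{r_k}$, which is the $k$-th Lagrange summand. The two preimages together give twice that summand, which the factor $\tfrac12$ in \eqref{a:eq:13} cancels.

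We must also check that no other points of $\Lambda_9$ contribute. The poles of $\Psi_9$ are at $q^{9k}$, outside $\Lambda_9$. Zeros of $(q^9/t;q^9)_n$ may cancel poles, but only at multiples of $9$, again outside $\Lambda_9$. We also need that the product $\Psi_9(t)(q^9/t;q^9)_n$ is holomorphic and nonzero at the preimages; this holds because its poles sit at $t=q^{9k}$ with $k\le 0$ or $k\ge n+1$ and it has no zeros. Multiplying by $\mathcal E_9(q^3;q^3)_n$ then gives \eqref{a:eq:16}.

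The same computation proves \eqref{a:eq:17} with $c=9n$ and nodes $r_k=3n-1+3k$, $k=0,\dots,n$. The exponents $r_k$ and $c-r_k=6n+1-3k$ fill the strings $3n-1,\dots,6n-1$ and $3n+1,\dots,6n+1$, which gives $(q^{3n-1}/t,q^{3n+1}/t;q^3)_{n+1}$. The ratio of $\Phi_{9,9n}$ to $\Psi_9$ is now $(q^9/t;q^9)_{n-1}$, and $z'=1-q^{9n}/t^2$. The extra factor $(1-q^{3n-1})$ is a constant and passes through. The case $n\ge1$ is needed so that the exponent $n-1$ is nonnegative and $c-r_k>0$.

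The main obstacle is the justification that $\Lambda_9$ captures exactly both preimages of every node and nothing else. This requires checking that all exponents in both strings are prime to $3$ and that the two strings do not overlap. The latter amounts to $r_k\neq c-r_j$, which is the congruence argument already given after \eqref{a:eq:8}. The former is immediate for $C_n$, since the exponents are $\equiv 1,2 \pmod 3$, and for $A_n$, since they are $\equiv 2,1\pmod 3$.
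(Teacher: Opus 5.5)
Your proposal is correct and follows essentially the same route as the paper. It uses the substitution $z=t+q^c/t$, the identity $\Phi_{9,c}(t+q^c/t)=(q^9/t;q^9)_n\Psi_9(t)$ (respectively $(q^9/t;q^9)_{n-1}\Psi_9(t)$), the factorization of the node product into the two $q^3$-strings, the residue change formula at the two simple preimages of each node, and the fact that every remaining pole has exponent divisible by $9$ and so lies outside $\Lambda_9$. One small slip, harmless for $n\ge1$: at $n=0$ the quantity that fails to be positive is $r_0=3n-1=-1$ (and $c=0$), not $c-r_k=6n+1-3k$, which is always positive.
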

\begin{proof}
We give a direct justification, including the factor \(1/2\) in \eqref{a:eq:13}.
In \eqref{a:eq:7} put \(c=9n+9\), \(z=t+q^c/t\). The \(n+1\) node equations
give \(2n+2\) distinct roots:
\[
 t=q^{3n+4+3j},\quad t=q^{3n+5+3j},\qquad 0\le j\le n.
\]
All these exponents are nonzero modulo 3. Moreover,
\[
 \begin{aligned}
 \prod_{j=0}^n(z-z_{3n+4+3j}^{(c)})
 &=t^{n+1}(q^{3n+4}/t,q^{3n+5}/t;q^3)_{n+1},\\
 \Phi_{9,c}(t+q^c/t)&=(q^9/t;q^9)_n\Psi_9(t),\\
 \frac{dz}{dt}&=1-q^{9n+9}/t^2.
 \end{aligned}
\]
At either preimage of a node, the residue is the corresponding Lagrange
summand \eqref{a:eq:6}; the derivative cancels by \eqref{eq:residue-change}.
The full residue sum counts every summand twice. Formula \eqref{a:eq:17} follows in
the same way from the root sets \(3n-1+3j\) and \(3n+1+3j\).

For additional detail, define
\[
 H(z)=\frac{\Phi_{9,c}(z)}{\prod_{j=0}^n(z-z_{r_j}^{(c)})},
 \qquad z(t)=t+q^c/t.
\]
The elementary factorization
\[
 z(t)-z_{r_j}^{(c)}
 =\frac{(t-q^{r_j})(t-q^{c-r_j})}{t}
\]
gives the displayed denominator formula. At a preimage \(t_0\) of the
\(j\)-th node, \(z'(t_0)\ne0\), and the Laurent expansion gives
\[
 \operatorname{Res}_{t=t_0}\bigl(H(z(t))z'(t)\bigr)
 =\frac{\Phi_{9,c}(z_{r_j}^{(c)})}
 {\prod_{k\ne j}(z_{r_j}^{(c)}-z_{r_k}^{(c)})}.
\]
To justify the pole count, note that
  each node \(z_{r_j}^{(c)}\) has the two preimages \(q^{r_j}\) and
\(q^{c-{r_j}}\). 

For~ \eqref{a:eq:16}, where \(c=9n+9\) and
\(r=3n+4+3j\), these preimages have exponents
\[
3n+4+3j\equiv1\pmod3
\]
and, after reversing the complementary list,
\[
3n+5+3j\equiv2\pmod3,
\qquad 0\le j\le n.
\]

For~\eqref{a:eq:17}, take \(c=9n\) and
\[
r_j=3n-1+3j,\qquad 0\le j\le n.
\]
The corresponding preimage exponents are
$
3n-1+3j
$ and $
6n+1-3j.
$
After reversing the second list, the latter becomes
$
3n+1+3j,\  (0\le j\le n).
$
Thus the two lists are congruent to \(2\) and \(1\pmod 3\),
respectively, and all \(2n+2\) node preimages belong to
\(\Lambda_9\). Moreover,
\[
\begin{aligned}
	\Phi_{9,9n}\left(t+\frac{q^{9n}}{t}\right)
	&=\frac{1}{(t,q^{9n}/t;q^9)_\infty}\\
	&=(q^9/t;q^9)_{n-1}\Psi_9(t).
\end{aligned}
\]

In both applications, \(c\) is divisible by \(9\). The remaining
nonzero poles of
\[
\Phi_{9,c}(z(t))
=\frac{1}{(t,q^c/t;q^9)_\infty}
\]
occur only at
$
t=q^{-9m}$ or 
$
t=q^{c+9m}$ ($m\ge0$). 
Their exponents are divisible by \(9\), so none of these poles belongs
to \(\Lambda_9\). Consequently, the selected residue sum contains
exactly the \(2n+2\) node contributions. By~\eqref{eq:residue-change}, the two preimages
of each node contribute the same Lagrange summand, which explains the
factor \(1/2\) in the definition of \(\mathcal L_9\).

This proves~\eqref{a:eq:17} for every \(n\ge1\). The value \(A_0\) was defined
separately, so no negative length finite product is required.
 \end{proof}

\subsection{The rational certificates}\label{sec:first-certificates}

This section contains rational identities in independent variables \(q,x,t\);
in applications \(x=q^{3n}\).

For finite integer sets \(I\), abbreviate
\[
 H_I(t)=\prod_{e\in I}(t-q^e x),\qquad
 J_I(t)=\prod_{e\in I}(t-q^e x^2).
\]
Define 
\begin{align}
 f(t):&=
 \frac{(t-q^9x^3)(t-q^4x)(t-q^5x)}{J_{\{7,8,10,11\}}(t)},            \label{a:eq:18}
\\
 \rho(t):&=
 \frac{t(t-1)}{x^3(t-x^3)}
 \frac{J_{\{-2,-1,1,2,4,5\}}(t)}
 {H_{\{-5,-4,-2,-1,1,2\}}(t)}.                                   \label{a:eq:19}
\end{align}
The finite product cancellation in \eqref{a:eq:15} gives, for \(x=q^{3n}\),
\begin{equation}
 f(t)=\frac{\mathcal K_{n+1}(t)}{\mathcal K_n(t)},\qquad
 \rho(t)=q^9t\frac{\mathcal K_n(q^9t)}{\mathcal K_n(t)}.                                \label{a:eq:20}
\end{equation}

We now give the finite-product cancellations leading to~\eqref{a:eq:20}.
Throughout this calculation, \(x=q^{3n}\), so that
\(x^2=q^{6n}\) and \(x^3=q^{9n}\).
 First, from the definition of \(\mathcal K_n(t)\), we have
\begin{align*}
	\frac{\mathcal K_{n+1}(t)}{\mathcal K_n(t)}
	&=
	\frac{1-q^9x^3/t}{t}\,
	\frac{(q^4x/t,q^5x/t;q^3)_{n+1}}
	{(q^7x/t,q^8x/t;q^3)_{n+2}} \nonumber\\
 	&=
	\frac{(1-q^9x^3/t)(1-q^4x/t)(1-q^5x/t)}
	{t(1-q^7x^2/t)(1-q^8x^2/t)
		(1-q^{10}x^2/t)(1-q^{11}x^2/t)}\\
	&=
	\frac{(t-q^9x^3)(t-q^4x)(t-q^5x)}
	{J_{\{7,8,10,11\}}(t)}
	=f(t).
\end{align*}

For the second quotient in~\eqref{a:eq:20}, direct substitution gives
\[
\begin{aligned}
	q^9t\frac{\mathcal K_n(q^9t)}{\mathcal K_n(t)}
	&=
	\frac{t}{x^3}\,
	\frac{(1/t;q^9)_n}{(q^9/t;q^9)_n}\,
	\frac{(q^4x/t,q^5x/t;q^3)_{n+1}}
	{(q^{-5}x/t,q^{-4}x/t;q^3)_{n+1}}.
\end{aligned}
\]
The quotient involving the base \(q^9\) telescopes as
\[
\frac{(1/t;q^9)_n}{(q^9/t;q^9)_n}
=
\frac{1-1/t}{1-x^3/t}
=
\frac{t-1}{t-x^3}.
\]
For the two base \(q^3\) quotients, cancellation of their common
factors gives
\[
\frac{(q^4x/t;q^3)_{n+1}}
{(q^{-5}x/t;q^3)_{n+1}}
=
\frac{
	(1-q^{-2}x^2/t)(1-qx^2/t)(1-q^4x^2/t)}
{
	(1-q^{-5}x/t)(1-q^{-2}x/t)(1-qx/t)}
\]
and
\[
\frac{(q^5x/t;q^3)_{n+1}}
{(q^{-4}x/t;q^3)_{n+1}}
=
\frac{
	(1-q^{-1}x^2/t)(1-q^2x^2/t)(1-q^5x^2/t)}
{
	(1-q^{-4}x/t)(1-q^{-1}x/t)(1-q^2x/t)}.
\]
Combining these identities and using the definitions of
\(H_I(t)\) and \(J_I(t)\), we obtain
\[
\begin{aligned}
	q^9t\frac{\mathcal K_n(q^9t)}{\mathcal K_n(t)}
	&=
	\frac{t(t-1)}
	{x^3(t-x^3)}
	\frac{J_{\{-2,-1,1,2,4,5\}}(t)}
	{H_{\{-5,-4,-2,-1,1,2\}}(t)}
	=\rho(t).
\end{aligned}
\]
These cancellations remain valid for \(n=0\) and \(n=1\);
in these cases, any overlapping endpoint factors are simply
cancelled as rational functions.

Define the five rational multipliers
\[
 \begin{aligned}
 \mu_0(t)&:=1-\frac{q^9x^3}{t^2},\\
 \chi_1(t)&:=
 \frac{(1-q^3x)(1-q^2x)\mu_0(t)t}
 {(t-q^2x)(t-q^7x^2)},\\
\mu_1(t)&:=(1-q^3x)(1-q^{18}x^3/t^2)f(t),\\
 \chi_2(t)&:=
 \frac{(1-q^3x)(1-q^6x)(1-q^5x)
 (1-q^{18}x^3/t^2)f(t)t}
 {(t-q^5x)(t-q^{13}x^2)},\\
 \chi_0(t)&:=
 \frac{(1-q^{-1}x)(1-x^3/t^2)t\,J_{\{2,4,5\}}(t)}
 {(t-x^3)H_{\{-1,1,2\}}(t)}.
 \end{aligned}                                                    
\]
Using \eqref{a:eq:16}, \eqref{a:eq:17}, 
and \eqref{a:eq:20},  
 together with
\[
\frac{(q^3;q^3)_{n+1}}{(q^3;q^3)_n}=1-q^3x,
\]
the multipliers give the following conversions:
\[
\begin{aligned}
 \mathcal E_9(q^3;q^3)_n\mathcal L_9(\mathcal K_n\mu_0(t))&=C_n,&
 \mathcal E_9(q^3;q^3)_n\mathcal L_9(\mathcal K_n\chi_1(t))&=A_{n+1},\\
 \mathcal E_9(q^3;q^3)_n\mathcal L_9(\mathcal K_n\mu_1(t))&=C_{n+1},&
 \mathcal E_9(q^3;q^3)_n\mathcal L_9(\mathcal K_n\chi_2(t))&=A_{n+2}.
\end{aligned}
\]
For \(n\ge1\), the remaining conversion is
\[
 \mathcal E_9(q^3;q^3)_n\mathcal L_9(\mathcal K_n \chi_0(t))=A_n.
\]

For \(\chi_1(t)\), the nodes of \(A_{n+1}\) consist of the nodes of \(C_n\)
and one additional spectral node, whose two \(t\)-preimages are
\(q^2x\) and \(q^7x^2\). Thus the extra factor in the spectral
denominator is \((t-q^2x)(t-q^7x^2)/t\).
The prefactor gains \((1-q^3x)(1-q^2x)\), giving \(\chi_1(t)\).
The expression for \(\mu_1(t)\) uses \(f(t)=\mathcal K_{n+1}(t)/\mathcal K_n(t)\). Applying the same
extra-node calculation at the next level gives \(\chi_2(t)\), with
preimages \(q^5x,q^{13}x^2\).

For \(\chi_0(t)\), the quotient of the two finite-product denominators, in the
order needed to divide the \(A_n\) kernel by \(\mathcal K_n\), is
\[
 \frac{(q^4x/t,q^5x/t;q^3)_{n+1}}
      {(q^{-1}x/t,qx/t;q^3)_{n+1}}
 =\frac{J_{\{2,4,5\}}(t)}{H_{\{-1,1,2\}}(t)}.
\]
The numerator quotient is
\((q^9/t;q^9)_{n-1}/(q^9/t;q^9)_n=t/(t-x^3)\).
Including the derivative factor \(1-x^3/t^2\) and the prefactor
\(1-q^{-1}x\) yields the displayed \(\chi_0(t)\).

Set
\[
 \begin{aligned}
 T_1(t)&=\mu_0(t)-\chi_1(t)-q^3x(1+q^3x)\mu_1(t)-q^8x^2\chi_2(t),\\
 T_2(t)&=\chi_0(t)-(1+qx)\mu_0(t)-q^2x(1+qx)\chi_1(t)+q^7x^3\mu_1(t) .
 \end{aligned}
\]
The two certificates are
\[
 R_1(t)=
 \frac{q^9x^3(t-q^9x^3)H_{\{4,5,7,8,10\}}(t)}
 {t^2J_{\{7,8,10,11,13\}}(t)},                                   
\]
and 
\[
 R_2(t)=
 \frac{\Pi(t)H_{\{4,5,7\}}(t)}
 {t^2J_{\{7,8,10,11\}}(t)},                                      
\]
where
\[
 \Pi(t)=
 -q^8x^4(1+q^2)t^2
 +q^{18}x^5(1+x)(1+qx)t
 -q^{27}x^8(1+q^2).
\]

\begin{lemma}\label{a:lem:certificates}
	The displayed functions satisfy the exact rational identities
	\begin{equation}
		T_i(t)=R_i(t)+\rho(t)R_i(q^9t),
		\qquad i=1,2.
		\label{a:eq:24}
	\end{equation}
\end{lemma}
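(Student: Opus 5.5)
Lemma~\ref{a:lem:certificates} asserts an identity between explicit rational functions in the independent variables $q,x,t$, so the plan is a direct algebraic verification. No analytic input is needed. For each $i$ we clear denominators and check that a certain polynomial in $\mathbb Z[q,q^{-1},x,t]$ vanishes identically.

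First we compute $R_i(q^9t)$ by substituting $t\mapsto q^9t$ into the displayed formulas:
\begin{align*}
H_I(q^9t)&=q^{9|I|}H_{I-9}(t)\quad\text{with }I-9=\{e-9:e\in I\},\\
J_I(q^9t)&=q^{9|I|}\prod_{e\in I}(t-q^{e-9}x^2).
\end{align*}
Multiplying by $\rho(t)$ from \eqref{a:eq:19}, the product $\rho(t)R_i(q^9t)$ becomes an explicit rational function. Part of the $J$-denominator of $R_i(q^9t)$, shifted by $-9$, should cancel against the numerator $J_{\{-2,-1,1,2,4,5\}}(t)$ of $\rho$. We then expect the remaining denominators on both sides of \eqref{a:eq:24} to divide a common denominator
\[
D(t)=t^{2}(t-x^3)\,H_{\{-5,-4,-2,-1,1,2\}}(t)\,J_{\{7,8,10,11,13\}}(t)\,(t-q^{-9}x^2\cdots)
\]
built from the explicit linear factors. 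Separately, we write $T_1$ and $T_2$ over their own denominators: $(t-q^2x)(t-q^7x^2)$ and $(t-q^5x)(t-q^{13}x^2)$ from $\chi_1,\chi_2$; $J_{\{7,8,10,11\}}$ from $f$; and $(t-x^3)H_{\{-1,1,2\}}$ from $\chi_0$.

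Next, for each $i$ we multiply $T_i-R_i-\rho R_i(q^9\cdot)$ by the common denominator and check that the resulting polynomial $N_i(t)$ is zero. The degree in $t$ is bounded, roughly $\le 15$. A clean way to organize this is a partial-fraction comparison. Both sides are rational in $t$ with the same finite list of simple poles $t=q^ex$, $t=q^ex^2$, $t=x^3$, $t=q^9x^3$, together with poles at $t=0$ and $t=\infty$. It therefore suffices to check the following:
\begin{itemize}
\item the residues agree at each simple pole, each being a single evaluation of a product of linear factors;
\item the principal parts agree at $0$ and at $\infty$.
\end{itemize}
Many poles should cancel on the nose. For example, the pole of $\chi_1$ at $t=q^7x^2$ should be matched by the pole of $R_1$ at the same point. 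Likewise, $\rho R_1(q^9t)$ brings in $J$-factors at $q^{e-9}x^2$, which should cancel against the numerator of $\rho$. The telescoping structure is designed precisely for these cancellations.

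The main obstacle is the bookkeeping of the many factors, in particular the poles of $\rho(t)R_i(q^9t)$ at the $H$-points $q^{-5}x,\dots,q^2x$. These must match the poles of $\chi_0,\chi_1,\chi_2$ and the numerator $H$-factors of $R_i$. The quadratic factor $\Pi(t)$ in $R_2$ is where a genuine nontrivial cancellation occurs, namely at the behaviour near $t=0$ and $t=\infty$ and at $t=x^3$. I would first attack that case by expanding in $t^{-1}$ at infinity and in $t$ at zero, to confirm that the $t^{-2}$ and $t^{-1}$ coefficients match. Since the identity is polynomial, the whole verification can equally be certified by a computer algebra expansion of $N_i$. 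I would present the residue-by-residue check as the human-readable proof, noting that the identity holds for independent $x$ and hence at every $x=q^{3n}$.
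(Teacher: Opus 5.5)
Your proposal is essentially the paper's proof. The paper first cancels the shifted linear factors to obtain $\rho(t)R_1(q^9t)=(t-1)(t-q^5x^2)/\bigl(t(t-q^2x)\bigr)$ and $\rho(t)R_2(q^9t)=\Omega(t)(t-1)J_{\{4,5\}}(t)/\bigl(t(t-x^3)H_{\{-1,1,2\}}(t)\bigr)$, where $\Omega(t)=\Pi(q^9t)/(q^{27}x^3)$, and then certifies $T_i-R_i-\rho R_i(q^9t)=0$ in $\mathbb Q(q,x,t)$ by exact symbolic normalization in Maple. Your residue-by-residue variant is a valid human-readable way to run the same check, provided you also match the constant term at $t=\infty$; note that $t=q^9x^3$ is a zero rather than a pole, and after cancellation $\rho R_i(q^9t)$ has poles only at $0$ and $q^2x$, plus $x^3,q^{-1}x,qx$ when $i=2$.
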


\begin{proof}
	We regard $q$, $x$, and $t$ as algebraically independent
	indeterminates.  First, direct cancellation of the shifted linear
	factors gives
	\begin{equation}
		\rho(t)R_1(q^9t)
		=
		\frac{(t-1)(t-q^5x^2)}
		{t(t-q^2x)},
		\label{a:eq:24a}
	\end{equation}
	and
	\begin{equation}
		\rho(t)R_2(q^9t)
		=
		\frac{\Omega(t)(t-1)J_{\{4,5\}}(t)}
		{t(t-x^3)H_{\{-1,1,2\}}(t)},
		\label{a:eq:24b}
	\end{equation}
	where
	\[
	\Omega(t)
	=
	-q^{-1}x(1+q^2)t^2
	+x^2(1+x)(1+qx)t
	-x^5(1+q^2).
	\]
	Indeed, in \eqref{a:eq:24a} the common shifted factors cancel
	directly, while in \eqref{a:eq:24b} we use
	\[
	\Omega(t)=\frac{\Pi(q^9t)}{q^{27}x^3}.
	\]
	
	It remains to verify the two rational identities in
	\eqref{a:eq:24}.  This is a finite exact algebraic calculation in
	the rational function field $\mathbb{Q}(q,x,t)$. Using Maple, the \texttt{normal} command applied to each of
	\[
	T_1(t)-R_1(t)-\rho(t)R_1(q^9t)
	\]
	and
	\[
	T_2(t)-R_2(t)-\rho(t)R_2(q^9t)
	\]
	returns identically zero.
	    No numerical specialization of
	$q$, $x$, or $t$, and no truncation of a $q$-series, is involved.
	
	The Maple source code and complete output of these exact symbolic
	verifications are available at 
\begin{center}
	\url{https://github.com/Ernest-Xia/Kanade--Russell-identities-Lemma-4.3}.
\end{center}
	This proves \eqref{a:eq:24}.
\end{proof}

Multiplying \eqref{a:eq:24} by \(\mathcal K_n(t)\) and using \eqref{a:eq:20} gives
\[
 \mathcal K_n(t)T_i(t)
 =\mathcal K_n(t)R_i(t)+q^9t\mathcal K_n(q^9t)R_i(q^9t).
\]
Apply \eqref{a:eq:14} with the rational function \(\mathcal K_nR_i\). The resulting vanishing,
after multiplication by \(\mathcal E_9(q^3;q^3)_n\), is precisely the corresponding
equation of \eqref{a:eq:11}. This proves both recurrences for all stated \(n\).

For clarity, the first identity reads
\[
 \begin{aligned}
 0&=\mathcal E_9(q^3;q^3)_n\mathcal L_9(\mathcal K_nT_1)\\
  &=C_n-A_{n+1}
    -q^3x(1+q^3x)C_{n+1}-q^8x^2A_{n+2}.
 \end{aligned}
\]
Since \(q^3x=q^{3n+3}\) and \(q^8x^2=q^{6n+8}\), this is the first
equation of \eqref{a:eq:11}. The second gives
\[
 0=A_n-(1+qx)C_n-q^2x(1+qx)A_{n+1}+q^7x^3C_{n+1},
\]
and \(qx=q^{3n+1}\), \(q^2x=q^{3n+2}\),
\(q^7x^3=q^{9n+7}\), giving the second equation of \eqref{a:eq:11}.
No distinctness assumption is imposed on auxiliary poles of the
certificates: \eqref{a:eq:14} applies to any rational function, including one with
multiple poles. The case \(n=0\) of the second equation has already been
proved by \eqref{a:eq:10}.

Finally, \eqref{a:eq:9} gives \(B_n-C_n=q^{3n+2}A_{n+1}\). The second equation
of \eqref{a:eq:11} gives 
\[(1+q^{3n+1})B_n-A_n=q^{9n+7}C_{n+1}.\]
In the first equation of \eqref{a:eq:11}, substitute
\(B_{n+1}=C_{n+1}+q^{3n+5}A_{n+2}\), obtaining
\[
 C_n=A_{n+1}+q^{3n+3}B_{n+1}+q^{6n+6}C_{n+1}.
\]
These are all three equations of \eqref{a:eq:3}, with \(A_n,B_n,C_n\) in place of \(U_n,V_n,W_n\).

\subsection{Boundary estimates and completion}\label{sec:first-boundary}
Applying  Lemma~\ref{lem:dd-expansion} with $d=9$ and
$\alpha_i=q^{r_i}$, $\beta_i=q^{c-r_i}$  gives
\[
 \mathcal D_c(r_0,\ldots,r_n)
 =\frac1{(q^9;q^9)_n}+O(q^L).
\]
For \eqref{a:eq:7}, all these parameter exponents are at least \(3n+4\). For \eqref{a:eq:8}
they are at least \(3n-1\). Finally,
\[
 \frac{\mathcal E_9(q^3;q^3)_n}{(q^9;q^9)_n}
 =\frac{(q^{9n+9};q^9)_\infty}{(q^{3n+3};q^3)_\infty}
 =1+O(q^{3n+3}).
\]
Consequently,
\begin{equation}
 A_n=1+O(q^{3n-1}),\qquad
 C_n=1+O(q^{3n+3}),\qquad
 B_n=1+O(q^{3n+2}).                                               \label{a:eq:27}
\end{equation}
The estimate for \(A_n\) is deliberately weaker than its sharp first
nonconstant degree; it suffices for uniqueness.

For example, the passage to \(A_n\) only uses
\[
 \begin{aligned}
 A_n
 &=(1-q^{3n-1})
 \left(\frac{\mathcal E_9(q^3;q^3)_n}{(q^9;q^9)_n}+O(q^{3n-1})\right)
 =1+O(q^{3n-1}),\qquad n\ge1.
 \end{aligned}
\]
For \(B_n\), use \eqref{a:eq:9} and the constant term 1 of \(A_{n+1}\).

For fixed \(n\), each term in the Lagrange formula for  \eqref{a:eq:6} 
is meromorphic at \(q=0\), with at most a finite-order pole.
Representation \eqref{eq:dd-expansion} shows that the negative Laurent
coefficients in their sum cancel. Hence \(A_n\), \(C_n\), and
\(B_n\) extend analytically to \(q=0\). Since the residue
recurrences hold for real \(0<q<1\), the identity theorem implies
that they also hold as identities of Taylor series at \(q=0\).

\begin{proof}[Proofs of \eqref{eq:kr1}--\eqref{eq:kr3}]
The vector \((A_n,B_n,C_n)^T\) satisfies \eqref{a:eq:4} by
\eqref{a:eq:9} and \eqref{a:eq:11}.  Its initial value is \((P_1,P_2,P_3)^T\), by the definition of
\(A_0\) and the computations surrounding \eqref{a:eq:10}.
 Equation \eqref{a:eq:27} supplies the boundary condition.
Lemma~\ref{lem:boundary-unique}   therefore identifies this initial vector
with \((S(0,0),S(1,3),S(2,3))^T\), giving \eqref{eq:kr1}--\eqref{eq:kr3}.

In particular, put
\[
 D_N=(A_N-U_N,B_N-V_N,C_N-W_N)^T.
\]
Equations \eqref{a:eq:5} and \eqref{a:eq:27} give
\(D_N\in q^{3N-1}\mathbb C[[q]]^3\) for \(N\ge1\). Hence
\[
 (P_1-S(0,0),P_2-S(1,3),P_3-S(2,3))^T
 =M_0\cdots M_{N-1}D_N
 \in q^{3N-1}\mathbb C[[q]]^3.
\]
Letting the integer \(N\) be arbitrarily large proves that every
coefficient of the displayed initial difference is zero.
All three sums and products in \eqref{eq:kr1}--\eqref{eq:kr3} are analytic on \(|q|<1\),
so their formal identities are also analytic identities throughout
that disk, including \(q=0\).
\end{proof}

\section{The fourth and fifth identities: cyclotomic families}\label{sec:cyclotomic}

%\subsection{Two parameterized identities}

Fix a primitive third root of unity $\omega$, so that
$\omega^2+\omega+1=0$, and put
\[
 \mathcal E_3=\frac{(q^3;q^3)_\infty}{(q;q)_\infty}
 =\frac1{(q,q^2;q^3)_\infty}.
\]
Here the kernel is $\Phi_{3,c}$, namely
\[
 \Phi_{3,c}(z)=\prod_{j\ge0}(1-zq^{3j}+q^{c+6j})^{-1}.
\]
The subscript $3$ distinguishes it from $\Phi_{9,c}$, and
$\mathcal E_3$ is distinct from $\mathcal E_9$.
For \(\tau\in\{1,2\}\) and \(n\ge0\), put
\[
 c=3n+2\tau,\qquad
 z_j=\omega q^{n+\tau+j}+\omega^2q^{2n+\tau-j}
 \quad(0\le j\le n),
\]
and define the ordinary divided difference
\begin{equation}
 Z_n^{(\tau)}:=\mathcal E_3(q;q)_n\Phi_{3,3n+2\tau}[z_0,\ldots,z_n].                 \label{b:eq:1}
\end{equation}
where the brackets denote the ordinary divided difference defined
in \eqref{eq:dd-definition}.

\begin{theorem}\label{thm:cyclotomic}
For every integer \(n\ge0\), the following identities hold:
\begin{equation}
Z_n^{(1)}=S(2n+1,3n+2),                \label{b:eq:2}
\end{equation}
and 
\begin{equation}
\begin{aligned}
 Z_n^{(2)}={}&(1+q^{n+1})S(2n+2,3n+4)
             +q^{2n+2}S(2n+3,3n+7) .
 \end{aligned}          \label{b:eq:3}
\end{equation}
\end{theorem}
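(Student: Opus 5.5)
Following the template of Sections~\ref{sec:first-three}--\ref{sec:first-proof}, we would identify both sides through a matrix recurrence in $n$ together with a boundary estimate, and then close with Lemma~\ref{lem:boundary-unique}.

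\emph{Sum side.} Put $X_n:=S(2n+1,3n+2)$ and $Y_n:=(1+q^{n+1})S(2n+2,3n+4)+q^{2n+2}S(2n+3,3n+7)$. We would apply the contiguous relations \eqref{2-1} and \eqref{2-2} repeatedly. Note that $(a,b)\mapsto(a+2,b+3)$ maps the parameter line $(2n+1,3n+2)$ to $(2n+3,3n+5)$, which is the point at level $n+1$. The aim is to express $X_n$ and $Y_n$, possibly with one auxiliary companion such as $S(2n+2,3n+4)$, linearly in the level-$(n+1)$ quantities, with coefficients in $\mathbb Z[q]$. This mimics the derivation of \eqref{a:eq:3}.

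For example, \eqref{2-1} at $(2n+1,3n+2)$ gives
\[
X_n-S(2n+2,3n+2)=q^{2n+2}X_{n+1}.
\]
Further uses of \eqref{2-2} and \eqref{2-1} should reduce $S(2n+2,3n+2)$ to terms in $S(2n+2,3n+4)$ and level-$(n+1)$ sums. The result is a recurrence $\mathbf U_n=M_n\mathbf U_{n+1}$ whose matrix $M_n$ has entries in $\mathbb Z[q]$. The boundary estimates $X_n,Y_n=1+O(q^{n+1})$ are immediate, since the only $(r,s)=(0,0)$ term is $1$ and every other term has exponent at least $2n+1$.

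\emph{Product side.} We would show that $(Z_n^{(1)},Z_n^{(2)},\dots)$ satisfies the same recurrence, using the residue method of Section~\ref{sec:first-proof} with base $q^3$. Set $z=t+q^c/t$ with $c=3n+2\tau$. Then the preimages of the node $z_j$ are $t=\omega q^{n+\tau+j}$ and $t=\omega^2q^{2n+\tau-j}$, and
\[
\Phi_{3,c}(t+q^c/t)=(t,q^c/t;q^3)_\infty^{-1}.
\]
We would factor this as a finite $q^3$-product times a fixed theta quotient $\Psi_3(t)=1/(t,q^{3\tau'}/t;q^3)_\infty$, whose quasi-periodicity is $\Psi_3(q^3t)=-\,\mathrm{(monomial)}\,\Psi_3(t)$. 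This expresses $Z_n^{(\tau)}$ as $\tfrac12\mathcal L_\Lambda$ of a rational kernel, where
\[
\Lambda=\{\omega q^k,\ \omega^2q^k: k\in\mathbb Z\}.
\]
This set is $q^3$-invariant and avoids the poles of $\Psi_3$, which lie on the real ray. Lemma~\ref{lem:finite-residue} then applies.

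As in \eqref{a:eq:20}, we would compute the rational quotient $\mathcal K_{n+1}/\mathcal K_n$ and the shift multiplier $\sigma$. Each term of the recurrence is encoded by a multiplier (analogues of $\mu_0,\chi_1,\dots$) that accounts for the extra node, the $(q;q)_{n+1}/(q;q)_n$ prefactor and the derivative $1-q^c/t^2$. We would then seek rational certificates $R$ with $T=R+\sigma R(q^3t)$ in $\mathbb Q(\omega,q,x,t)$, where $x=q^n$. These would be found by an ansatz: a numerator polynomial of bounded degree over the denominators suggested by the kernel's poles, with coefficients solved by linear algebra and verified exactly by computer algebra.

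\emph{Boundary and completion.} Lemma~\ref{lem:dd-expansion} with $d=3$, $\zeta=\omega$ and $L\ge n+1$ gives
\[
\Phi_{3,c}[z_0,\dots,z_n]=\frac{1}{(q^3;q^3)_n}+O(q^{n+1}).
\]
Since $\mathcal E_3(q;q)_n/(q^3;q^3)_n=1+O(q^{n+1})$, we get $Z_n^{(\tau)}=1+O(q^{n+1})$, which matches the sum side. The $\omega$-terms might appear to introduce non-real coefficients, but the conjugation symmetry $\omega\leftrightarrow\omega^2$ combined with $j\leftrightarrow n-j$ permutes the nodes, so $Z_n^{(\tau)}$ has real (indeed integer) coefficients.

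The analytic-to-formal passage goes as in the Remark after Lemma~\ref{lem:boundary-unique}. Lemma~\ref{lem:boundary-unique} then yields \eqref{b:eq:2} and \eqref{b:eq:3}.

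\emph{Main obstacle.} The hardest step will be finding the certificates. The recurrence closes only if $\tau=1$ and $\tau=2$ are coupled, or if a companion divided difference is added, and guessing the correct third component and the shape of $M_n$ is the delicate part. We would first try to close the system on $(Z_n^{(1)},Z_n^{(2)})$ alone. If that fails, we would adjoin a divided difference with one extra node, by analogy with $A_n$ versus $C_n$, and read off the required sum-side companion from the contiguous relations.
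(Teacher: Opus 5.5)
\textbf{Verdict.} Your overall architecture matches the paper's: a contiguous-relation recurrence on the sum side, a residue representation with a rational certificate on the product side, matching boundaries, and Lemma~\ref{lem:boundary-unique}. Your boundary estimates are also correct. One small correction there: for $Y_n$ the summand $q^{n+1}S(2n+2,3n+4)$ already contributes at order $n+1$, not at order $\ge 2n+1$, but $Y_n=1+O(q^{n+1})$ still holds. The genuine gap is the recurrence step, which is where the whole proof lives. You plan it around a coupling that cannot exist.

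\textbf{Why the coupling fails.} The relations \eqref{2-1} and \eqref{2-2} shift $b$ only by multiples of $3$, so they preserve $b \bmod 3$. The family $X_n=S(2n+1,3n+2)$ lies in the class $b\equiv 2$, while both sums in $Y_n$ lie in the class $b\equiv 1$. Consequently:
\begin{itemize}
\item Your proposed reduction of $S(2n+2,3n+2)$ to terms in $S(2n+2,3n+4)$ cannot be carried out with \eqref{2-1} and \eqref{2-2}.
\item No system mixing the two families, on $(X_n,Y_n)$ or on $(Z_n^{(1)},Z_n^{(2)})$, can come out of the contiguous relations.
\item The product side shows the same decoupling. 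With $c=3n+2\tau$ the natural weight is $1/(t,q^{2\tau}/t;q^3)_\infty$. The weights for $\tau=1,2$ differ by a non-terminating product, since $4-2\not\equiv0\pmod 3$, so the two families cannot be written as residue sums against a common weight with rational kernels.
\item For the same reason, your factorization through $1/(t,q^{3\tau'}/t;q^3)_\infty$ with monomial quasi-periodicity does not exist. The correct shift ratio is $(1-t)/(1-q^{2\tau-3}/t)$. This is rational but not monomial, and Lemma~\ref{lem:finite-residue} still covers it.
\end{itemize}

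\textbf{The missing idea.} Each $\tau$ closes separately as a third-order scalar recurrence $\sum_{j=0}^3\lambda^{(\tau)}_j(q^n)(\cdot)_{n+j}=0$. The companions are simply the shifts $Z^{(\tau)}_{n+1},Z^{(\tau)}_{n+2}$, so no auxiliary family is needed, and no extra-node divided difference in the manner of $A_n$ versus $C_n$.
\begin{itemize}
\item For $\tau=1$ the recurrence is \eqref{b:eq:8} with $(\mathsf u_0,\mathsf v_0)=(1,2)$.
\item For $\tau=2$ you cannot use the polynomial companion matrix of $F_n=S(2n+1,3n+4)$ directly, because you have no product-side analogue of $F_n$. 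Instead, write $Y_n=(1+q^{n+1})F_n-q^{3n+3}F_{n+1}$ and eliminate through that companion matrix. This gives a recurrence for $Y_n$ itself, whose leading coefficient $1+q^{n+2}+q^{2n+5}$ is non-monic but a formal unit.
\item On the product side, all four terms share a single weight, because $c$ grows by exactly $3$ per level. They are encoded by the rational ratios $\mathcal K_{n+j,\tau}/\mathcal K_{n,\tau}$, times $(q;q)_{n+j}/(q;q)_n$ and the derivative factor $1-q^{3(n+j)+2\tau}/t^2$.
\item The certificate has the specific shape $L_\tau=B_\tau\mathcal P_\tau+C_\tau\mathcal P_\tau(q^3t)/(q^{21}x^3)$ with $\deg_t\mathcal P_\tau=6$.
\end{itemize}
Without this target recurrence, your certificate ansatz has nothing definite to solve for, and your planned first attempt and fallback both aim at the wrong structure.
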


The proof of Theorem~\ref{thm:cyclotomic} is developed below
and completed in Section~6.3.

%\subsection{Scalar recurrences for the sum families}

We use the common contiguous relations \eqref{2-1} and \eqref{2-2}.
For fixed \(\mathsf u_0,\mathsf v_0\), let
\[
 F_n=S(2n+\mathsf u_0,3n+\mathsf v_0),\qquad
 \mathsf u=2n+\mathsf u_0,\quad
 \mathsf v=3n+\mathsf v_0.
\]
Two uses of   \eqref{2-1} give
\[
 S(\mathsf u+2,\mathsf v)
 =F_n-q^{\mathsf u+1}(1+q)F_{n+1}
   +q^{2\mathsf u+5}F_{n+2}.
\]
The second relation, applied at \((\mathsf u+2,\mathsf v)\), gives
\[
 S(\mathsf u+2,\mathsf v)-F_{n+1}
 =q^{\mathsf v+3}S(\mathsf u+5,\mathsf v+6),
\]
while
\[
 S(\mathsf u+5,\mathsf v+6)
 =F_{n+2}-q^{\mathsf u+5}F_{n+3}.
\]
Combining these equations proves
\begin{equation}
 \begin{aligned}
 F_n-\bigl(1+q^{\mathsf u+1}(1+q)\bigr)F_{n+1}
 &{}+(q^{2\mathsf u+5}-q^{\mathsf v+3})F_{n+2}
+q^{\mathsf u+\mathsf v+8}F_{n+3}=0.
 \end{aligned}              \label{b:eq:8}
\end{equation}

Write \(x=q^n\). Define four coefficient polynomials for each \(\tau\) by
\[
 \begin{aligned}
 \lambda^{(1)}_0&=1,\quad  
 \lambda^{(1)}_1=-1-q^2(1+q)x^2,\quad 
 \lambda^{(1)}_2=-q^5x^3+q^7x^4,\quad 
 \lambda^{(1)}_3=q^{11}x^5,
 \end{aligned}                                               
\]
and
\[
 \begin{aligned}
 \delta(x)&=1+q^2x+q^5x^2,\qquad \chi(x)=1+qx+q^3x^2,\\ 
 \nu(x)&=1+qx+(q^2+q^3+q^5)x^2
     +(q^4+q^6)x^3+(q^7+q^8)x^4,\\
 \lambda^{(2)}_0& =\delta(x),\quad 
 \lambda^{(2)}_1=-\nu(x),\quad 
 \lambda^{(2)}_2=-q^6x^3(1-q^6x^3),\quad 
 \lambda^{(2)}_3=q^{13}x^5\chi(x).
 \end{aligned}                                               
\]
\begin{lemma}\label{b:lem:sum}
The right sides \(Y_n^{(\tau)}\) of \eqref{b:eq:2}--\eqref{b:eq:3} satisfy
\begin{equation}
 \sum_{j=0}^3\lambda_j^{(\tau)}(q^n)Y_{n+j}^{(\tau)}=0.             \label{b:eq:11}
\end{equation}
\end{lemma}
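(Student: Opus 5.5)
For $\tau=1$ the identity is a direct specialization of the general three-term recurrence \eqref{b:eq:8}. Take $\mathsf u_0=1$ and $\mathsf v_0=2$, so that $F_n=S(2n+1,3n+2)=Y_n^{(1)}$, $\mathsf u=2n+1$ and $\mathsf v=3n+2$. With $x=q^n$ the coefficients in \eqref{b:eq:8} become
\[
1,\qquad -1-q^{2}(1+q)x^2,\qquad q^{7}x^4-q^{5}x^3,\qquad q^{11}x^5 .
\]
These are exactly $\lambda^{(1)}_0,\dots,\lambda^{(1)}_3$, which gives \eqref{b:eq:11} for $\tau=1$.

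For $\tau=2$ I would first reduce $Y_n^{(2)}$ to a single sum family. Put
\[
G_n:=S(2n+2,3n+4),
\]
which satisfies \eqref{b:eq:8} with $(\mathsf u_0,\mathsf v_0)=(2,4)$. Next apply \eqref{2-1} at $(2n+2,3n+4)$ and \eqref{2-2} at $(2n+3,3n+4)$:
\[
S(2n+2,3n+4)-S(2n+3,3n+4)=q^{2n+3}G_{n+1},\qquad
S(2n+3,3n+4)-S(2n+3,3n+7)=q^{3n+7}G_{n+2}.
\]
Together these give
\[
S(2n+3,3n+7)=G_n-q^{2n+3}G_{n+1}-q^{3n+7}G_{n+2}.
\]
Substituting into the definition of $Y_n^{(2)}$ yields
\[
Y_n^{(2)}=(1+qx+q^2x^2)G_n-q^5x^4G_{n+1}-q^9x^5G_{n+2}.
\]
So $Y^{(2)}$ is the image of $G$ under an explicit second-order operator whose coefficients are polynomials in $q$ and $x$.

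The remaining step is an elimination. Write $Y^{(2)}_{n+j}$ for $j=0,1,2,3$ using the displayed formula with $x$ replaced by $q^jx$; this involves $G_n,\dots,G_{n+5}$. Then use \eqref{b:eq:8} for $G$, which has leading coefficient $1$, to reduce $G_{n+3},G_{n+4},G_{n+5}$ successively to combinations of $G_n,G_{n+1},G_{n+2}$. This gives
\[
Y^{(2)}_{n+j}=\sum_{k=0}^2 c_{jk}(q,x)\,G_{n+k},
\]
where the $c_{jk}$ are rational in $q$ and $x$ with denominators equal to powers of the leading coefficient of \eqref{b:eq:8} after shifting.

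Because of how \eqref{b:eq:8} is normalized, solving it for $G_{n+3}$ divides by $q^{\mathsf u+\mathsf v+8}=q^{5n+14}$, so negative powers of $x$ appear. The cleaner route is to solve it for $G_n$ in terms of higher indices, or equivalently to work with the vector $(G_{n+1},G_{n+2},G_{n+3})$. In either form, \eqref{b:eq:11} follows once the three coefficient identities
\[
\sum_{j=0}^3\lambda^{(2)}_j(x)\,c_{jk}(q,x)=0,\qquad k=0,1,2,
\]
hold in $\mathbb Q(q,x)$. These are sufficient, and do not require any linear independence of the $G_{n+k}$, since they make the combination vanish identically. They are finite polynomial identities, to be checked by expansion or by a computer algebra \texttt{normal} computation as in Lemma~\ref{a:lem:certificates}.

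The main obstacle is the size of this polynomial verification and the bookkeeping of the shifts $x\mapsto q^jx$. The risk is an off-by-one mismatch between the shifted coefficients of \eqref{b:eq:8} and $\lambda^{(2)}_j$. The factors $\delta$, $\nu$ and $\chi$ in $\lambda^{(2)}$ indicate that the third-order operator for $Y^{(2)}$ is the left factor in a factorization $\Lambda^{(2)}\circ T=\widetilde T\circ L_G$, where $T$ is the second-order operator above and $L_G$ is the operator of \eqref{b:eq:8}. If the direct elimination becomes unwieldy, I would instead solve for an explicit second-order $\widetilde T$ with polynomial coefficients by matching coefficients, and verify this operator identity.
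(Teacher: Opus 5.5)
Your proposal is correct and is essentially the paper's argument: $\tau=1$ is \eqref{b:eq:8} with $(\mathsf u_0,\mathsf v_0)=(1,2)$, and for $\tau=2$ the paper likewise expresses $Y^{(2)}$ through a single family obeying \eqref{b:eq:8}, eliminates backward with the companion matrix into the top window, and checks the resulting row identity with \texttt{normal}; in your notation that window is $(G_{n+3},G_{n+4},G_{n+5})$ rather than $(G_{n+1},G_{n+2},G_{n+3})$, although either direction is legitimate over $\mathbb Q(q,x)$. The only real difference is the auxiliary family: the paper takes $F_n=S(2n+1,3n+4)$, for which $Y_n^{(2)}=(1+qx)F_n-q^3x^3F_{n+1}$ is first order and the computation is smaller, whereas your $G_n=S(2n+2,3n+4)=F_n-q^{2n+2}F_{n+1}$ gives a second-order expression, and your elimination identities are equivalent to the paper's through this relation and hence hold.
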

\begin{proof}
For \(\tau=1\), this is \eqref{b:eq:8} with
\(\mathsf u_0=1,\mathsf v_0=2\).

For completeness, the finite algebra giving \eqref{b:eq:11} for \(\tau=2\) is explicit as follows. Put \(F_n=S(2n+1,3n+4)\). Relations \eqref{2-1}
 and \eqref{2-2}   rewrite  the right side of \eqref{b:eq:3} as
\[
 Y_n^{(2)}=\mathsf a(x) \begin{pmatrix}
 	F_n\\
 	F_{n+1}\\ F_{n+2}
 \end{pmatrix},                          
\]
where
\[
\mathsf a(x)=(1+qx,-q^3x^3,0).
\]
The recurrence \eqref{b:eq:8} for this \(F_n\) has companion matrix
\[
 \begin{gathered}
 M(x)=\begin{pmatrix}
 1+q^2(1+q)x^2&q^7x^3-q^7x^4&-q^{13}x^5\\
 1&0&0\\0&1&0
 \end{pmatrix}.
 \end{gathered}
\]
Define four row vectors
\[
 \begin{aligned}
 v_0&:=\mathsf a(x)M(x)M(qx)M(q^2x),\\
 v_1&:=\mathsf a(qx)M(qx)M(q^2x),\\
 v_2&:=\mathsf a(q^2x)M(q^2x),\\
 v_3&:=\mathsf a(q^3x).
 \end{aligned}
\]
Direct multiplication gives the polynomial row identity
\begin{equation}
 \sum_{j=0}^3\lambda_j^{(2)}(x)v_j=0.                         \label{b:eq:13}
\end{equation}
All three component identities are verified exactly in Maple using
the \texttt{normal} command, with $q$ and $x$ treated as
algebraically independent.  The source code and complete output are
available at 
\begin{center}
\url{https://github.com/Ernest-Xia/Kanade--Russell-identities-Lemma-5.2}.
\end{center}
 Multiplication by \((F_{n+3},F_{n+4},F_{n+5})^T\) proves \eqref{b:eq:11}. No product evaluation enters this calculation.
\end{proof}

\section{Proofs of the fourth and fifth identities}\label{sec:cyclotomic-proof}

\subsection{The residue representations}\label{b:sec:residue}

We first work at a fixed real \(0<q<1\). The nodes in \eqref{b:eq:1} are distinct. Indeed, with
\(z(r)=\omega q^r+\omega^2q^{c-r}\),
\[
 z(r)-z(s)=\omega(q^r-q^s)(1-\omega q^{c-r-s}).                
\]
Both factors are nonzero when \(r\ne s\). The same calculation also works for complex \(0<|q|<1\).

For \(\tau\in\{1,2\}\), set
\[
\Psi_{3,\tau}(t)
=\frac{1}{(t,q^{2\tau}/t;q^3)_\infty},
\qquad
\eta_{\tau}(t)
=q^3\frac{t(t-1)}{t-q^{2\tau-3}}.
\]
For rational \(R\), define
\begin{equation}
 \mathcal L_{3,\tau}(R):=\frac12
 \sum_{\epsilon=1}^2\sum_{r\in\mathbb Z}
 \operatorname{Res}_{t=\omega^\epsilon q^r}\bigl(\Psi_{3,\tau}(t)R(t)\bigr).
                                                                  \label{b:eq:15}
\end{equation}
The support is finite: \(\Psi_{3,\tau}\) has poles only on the positive real \(q\)-lattice, and a rational function has finitely many poles away from 0 and infinity. For \(a\in\mathbb C\setminus \{0\}\), we call
$
a q^{\mathbb Z}:=\{a q^m:m\in\mathbb Z\}
$
a multiplicative \(q\)-lattice. In particular, when \(0<q<1\),
\(q^{\mathbb Z}\subset\mathbb R_{>0}\) is the positive real
\(q\)-lattice. The residue sum is taken over the disjoint union of the two rotated
\(q\)-lattices
$
\omega q^{\mathbb Z}
 $ and $ \omega^2 q^{\mathbb Z}.
$
Both lattices are disjoint from the positive real \(q\)-lattice
containing the poles of \(\Psi_{3,\tau}\).  

The product shift gives
\[
 \frac{\Psi_{3,\tau}(q^3t)}{\Psi_{3,\tau}(t)}
 =\frac{1-t}{1-q^{2\tau-3}/t},\qquad
 \eta_{\tau}(t)=-q^3\frac{\Psi_{3,\tau}(q^3t)}{\Psi_{3,\tau}(t)}.
\]
\begin{lemma}\label{b:lem:shift}
For every rational function \(R(t)\),
\begin{equation}
\mathcal L_{3,\tau}\bigl(R(t)+\eta_{\tau}(t)R(q^3t)\bigr)=0.     \label{b:eq:16}
\end{equation}
\end{lemma}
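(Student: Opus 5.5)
This is Lemma~\ref{lem:finite-residue} with $d=3$, $\Psi=\Psi_{3,\tau}$ and $\Lambda=\omega q^{\mathbb Z}\cup\omega^2q^{\mathbb Z}$. I will verify the hypotheses of that lemma one at a time, as was done for Lemma~\ref{a:lem:shift}.

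\emph{Invariance of $\Lambda$.} The set $\Lambda$ lies in $\mathbb C\setminus\{0\}$. It is invariant under multiplication by $q^{3}$ and by $q^{-3}$, since each rotated lattice $\omega^\epsilon q^{\mathbb Z}$ is carried onto itself.

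\emph{Meromorphy and absence of poles on $\Lambda$.} The two infinite products $(t;q^3)_\infty$ and $(q^{2\tau}/t;q^3)_\infty$ converge locally uniformly on $\mathbb C\setminus\{0\}$, so $\Psi_{3,\tau}$ is meromorphic there. Its poles lie at $t=q^{-3m}$ and $t=q^{2\tau+3m}$ with $m\ge0$. For real $0<q<1$ these are positive real numbers. No point of $\Lambda$ is positive real, because $\omega^\epsilon q^r$ has argument $\pm2\pi/3$. Hence $\Psi_{3,\tau}$ has no pole on $\Lambda$.

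\emph{Rationality of $\sigma$.} I must check that
\[
\sigma(t)=-q^3\Psi_{3,\tau}(q^3t)/\Psi_{3,\tau}(t)
\]
is rational and equals $\eta_\tau$. Shifting the two products gives
\[
(q^3t;q^3)_\infty=(t;q^3)_\infty/(1-t),\qquad
(q^{2\tau-3}/t;q^3)_\infty=(1-q^{2\tau-3}/t)\,(q^{2\tau}/t;q^3)_\infty .
\]
From these,
\[
\frac{\Psi_{3,\tau}(q^3t)}{\Psi_{3,\tau}(t)}=\frac{1-t}{1-q^{2\tau-3}/t}=\frac{t(1-t)}{t-q^{2\tau-3}}.
\]
Multiplying by $-q^3$ gives $q^3t(t-1)/(t-q^{2\tau-3})=\eta_\tau(t)$. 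This holds as an identity of meromorphic functions on $\mathbb C\setminus\{0\}$, first away from zeros and poles and then by continuation.

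\emph{Conclusion.} The functional $\mathcal L_{3,\tau}$ in \eqref{b:eq:15} is exactly $\mathcal L_\Lambda$ for this choice of $\Lambda$, including the factor $\tfrac12$. Lemma~\ref{lem:finite-residue} therefore gives finite support and \eqref{b:eq:16} for every rational $R$.

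The only point needing care is the standing assumption in Lemma~\ref{lem:finite-residue} that $q$ is real with $0<q<1$. This assumption is what makes the pole set of $\Psi_{3,\tau}$ positive real and hence disjoint from the rotated lattices, and this section already works at such fixed real $q$. No contour argument is needed: poles of $R$ at $0$, at $\infty$, or on the positive real lattice play no role, and poles of $R$ on $\Lambda$ may have any order.
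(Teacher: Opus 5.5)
Your proposal is correct and follows the paper's own route: the paper likewise obtains the lemma as the case $d=3$ of Lemma~\ref{lem:finite-residue}, matching the residue of the shifted term at $\omega^\epsilon q^r$ with minus the residue of $\Psi_{3,\tau}R$ at $\omega^\epsilon q^{r+3}$ and then reindexing the finite sum. Your explicit checks are exactly the hypotheses that invocation requires: the identity $-q^3\Psi_{3,\tau}(q^3t)/\Psi_{3,\tau}(t)=\eta_\tau(t)$, the $q^{\pm3}$-invariance of the rotated lattices, and the fact that the positive-real pole set of $\Psi_{3,\tau}$ misses them for real $0<q<1$.
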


\begin{proof}
This is Lemma~\ref{lem:finite-residue} with $d=3$. The second integrand is \(-q^3\Psi_{3,\tau}(q^3t)R(q^3t)\). Its residue at \(t=\omega^\epsilon q^r\) is the negative of the residue of \(\Psi_{3,\tau}(u)R(u)\) at \(u=\omega^\epsilon q^{r+3}\). Reindex the finite sum. There is no infinite contour limit. The argument permits auxiliary poles of arbitrary finite order. 
\end{proof}

Define
\begin{equation}
 \mathcal K_{n,\tau}(t):=
 \frac{(q^{2\tau}/t;q^3)_n}
 {t^{n+1}(\omega q^{n+\tau}/t,\omega^2q^{n+\tau}/t;q)_{n+1}}.
 \label{b:eq:17}
\end{equation}

\begin{lemma}\label{b:lem:residue}
For \(n\ge0\),
\[
Z_n^{(\tau)}=\mathcal E_3(q;q)_n\mathcal L_{3,\tau}\left(
       (1-q^{3n+2\tau}/t^2)\mathcal K_{n,\tau}(t)\right).                
\]
\end{lemma}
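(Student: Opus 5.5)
The plan is to repeat the argument of Lemma~\ref{a:lem:residue} with the cyclotomic nodes. Put $c=3n+2\tau$ and substitute $z=t+q^c/t$ into the Lagrange formula \eqref{eq:dd-definition} for $\Phi_{3,c}[z_0,\ldots,z_n]$. By \eqref{2-9} with $d=3$,
\[
\Phi_{3,c}(t+q^c/t)=\frac1{(t,q^{3n+2\tau}/t;q^3)_\infty}
=(q^{2\tau}/t;q^3)_n\,\Psi_{3,\tau}(t).
\]
This holds because $(q^{2\tau}/t;q^3)_\infty=(q^{2\tau}/t;q^3)_n(q^{3n+2\tau}/t;q^3)_\infty$, so the finite product moves to the numerator. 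The derivative is $dz/dt=1-q^{3n+2\tau}/t^2$, which accounts for the factor in the statement.

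Next I factor the node differences. Each node $z_j=\omega q^{n+\tau+j}+\omega^2q^{2n+\tau-j}$ has the form $\alpha_j+\beta_j$ with $\alpha_j\beta_j=q^c$, since $\omega^3=1$. Hence
\[
z(t)-z_j=\frac{(t-\omega q^{n+\tau+j})(t-\omega^2q^{2n+\tau-j})}{t}.
\]
Taking the product over $0\le j\le n$ and reversing the second list ($j\mapsto n-j$) turns its exponents into $n+\tau+j$. This gives
\[
\prod_{j=0}^n(z-z_j)=t^{n+1}(\omega q^{n+\tau}/t,\ \omega^2q^{n+\tau}/t;q)_{n+1}.
\]
Therefore $H(z(t))z'(t)=(1-q^{c}/t^2)\mathcal K_{n,\tau}(t)\Psi_{3,\tau}(t)$, where $H(z)=\Phi_{3,c}(z)/\prod_j(z-z_j)$.

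Then I identify the residues. The $2n+2$ preimages $\omega q^{n+\tau+j}$ and $\omega^2 q^{n+\tau+j}$ are pairwise distinct and lie on the two rotated lattices $\omega q^{\mathbb Z}$ and $\omega^2q^{\mathbb Z}$. At each preimage $z'\neq0$. Indeed $z'(t_0)=0$ would force $t_0^2=q^c$, i.e.\ $t_0=\pm q^{c/2}$, which is real and so lies on neither rotated lattice. Equivalently, $\alpha_j\ne\beta_j$ because $\omega\ne\omega^2$. By \eqref{eq:residue-change}, the residue at each of the two preimages of $z_j$ equals the $j$-th Lagrange summand.

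It remains to check that nothing else contributes to $\mathcal L_{3,\tau}$. The poles of $\Psi_{3,\tau}$ lie only on the positive real $q$-lattice, and the finite numerator $(q^{2\tau}/t;q^3)_n$ has no poles at nonzero $t$. The only poles of the integrand on $\omega^{\epsilon}q^{\mathbb Z}$ are therefore the node preimages, and each is simple because the preimages are distinct. Summing gives twice the divided difference, which the $\tfrac12$ in \eqref{b:eq:15} cancels. Multiplying by $\mathcal E_3(q;q)_n$ yields the claim.

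The only delicate step is the bookkeeping: checking that the reversed conjugate list exactly reproduces the base-$q$ Pochhammer symbols in \eqref{b:eq:17}, and that the preimages are distinct for $n=0$ as well as in general. For $n=0$ the kernel is just $1/(t(1-\omega q^\tau/t)(1-\omega^2q^\tau/t))$, and distinctness holds since $\omega\ne\omega^2$.
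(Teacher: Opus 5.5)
Your proposal is correct and follows the paper's proof essentially step for step. Both arguments use the substitution $z=t+q^c/t$ with $c=3n+2\tau$, the factorization $\Phi_{3,c}(t+q^c/t)=(q^{2\tau}/t;q^3)_n\Psi_{3,\tau}(t)$, the reversed-list identification of the node product with the denominator of $\mathcal K_{n,\tau}$, and the observation that each Lagrange summand appears at two preimages on $\omega q^{\mathbb Z}\cup\omega^2q^{\mathbb Z}$, while all poles of $\Psi_{3,\tau}$ lie on the real lattice; your checks that $z'$ does not vanish at the preimages and that $n=0$ is covered just make explicit what the paper states briefly.
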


\begin{proof}
Use \(z=t+q^c/t\), where \(c=3n+2\tau\). The preimages of the nodes are
\[
 t=\omega q^{n+\tau+j},\qquad t=\omega^2q^{n+\tau+j},\qquad0\le j\le n,
\]
after reversing the order in the complementary list. They are all distinct, and the derivative \(1-q^c/t^2\) is nonzero there. Moreover,
\[
 \prod_{j=0}^n(z-z_j)
 =t^{n+1}(\omega q^{n+\tau}/t,\omega^2q^{n+\tau}/t;q)_{n+1},
\]
and 
\[
 \Phi_{3,c}(t+q^c/t)=(q^{2\tau}/t;q^3)_n\Psi_{3,\tau}(t).
\]
At either preimage of \(z_j\), the local change of variable shows that the residue of
\[
 \frac{\Phi_{3,c}(t+q^c/t)(1-q^c/t^2)}{\prod_{k=0}^n(t+q^c/t-z_k)}
\]
is exactly the \(j\)-th Lagrange term in the divided difference. Each term is therefore counted twice, accounting for the factor \(1/2\) in \eqref{b:eq:15}. All other poles of \(\Phi_{3,c}(t+q^c/t)\) lie on the real \(q\)-lattice and are excluded from \eqref{b:eq:15}. Thus no selected pole is omitted.
\end{proof}

\subsection{The polynomial certificates}\label{sec:cyclotomic-certificates}

In this section \(q,x,t\) are independent indeterminates. Define
\[
 h_e^{(d)}(t):=t^2+q^e x^d t+q^{2e}x^{2d},\qquad
 H_{r,s}^{(d)}(t):=\prod_{e=r}^s h_e^{(d)}(t),
\]
with an empty product interpreted as 1. Finite-product cancellation in \eqref{b:eq:17}, after \(x=q^n\), gives
\begin{equation}
 \frac{\mathcal K_{n+1,\tau}(t)}{\mathcal K_{n,\tau}(t)}
 =\frac{(t-q^{2\tau}x^3)h_{\tau}^{(1)}(t)}
        {h_{\tau+1}^{(2)}(t)h_{\tau+2}^{(2)}(t)},                   \label{b:eq:19}
\end{equation}
and 
\begin{equation}
 \eta_{\tau}(t)\frac{\mathcal K_{n,\tau}(q^3t)}{\mathcal K_{n,\tau}(t)}
 =\rho_{\tau}(t):=\frac{t(t-1)}{x^3(t-q^{2\tau-3}x^3)}
             \frac{H_{\tau-2,\tau}^{(2)}(t)}{H_{\tau-3,\tau-1}^{(1)}(t)}.
                                                                  \label{b:eq:20}
\end{equation}
For \eqref{b:eq:19}, the numerator gains \(1-q^{2\tau}x^3/t\); the old two starting denominator factors are removed and four ending factors are added. Pair the factors with coefficients \(\omega\) and \(\omega^2\), using
\((t-\omega u)(t-\omega^2u)=t^2+ut+u^2\).
For \eqref{b:eq:20}, shifting the \(q\)-factorial strings by three places leaves exactly the three paired factors at each end. The numerator quotient is
\[
 \frac{(q^{2\tau-3}/t;q^3)_n}{(q^{2\tau}/t;q^3)_n}
 =\frac{1-q^{2\tau-3}/t}{1-q^{2\tau-3}x^3/t}.
\]
This cancels the corresponding factor in \(\eta_{\tau}\). The power of \(t\) contributes \(q^{-3n-3}\), and \(q^{3n}=x^3\). These are finite-product identities for every \(n\ge0\); shortened strings at \(n=0,1\) are handled by rational cancellation.

Set
\begin{align*}
 K_j(x)&:=\prod_{r=1}^j(1-q^r x),\\
 V_{j,\tau}(t)&:=\prod_{i=0}^{j-1}(t-q^{2\tau+3i}x^3),
\\
 \Delta_{\tau}(t)&:=t^2H_{\tau+1,\tau+6}^{(2)}(t),
\end{align*}
and 
 define the degree-\(14\) polynomial
\[
 \begin{aligned}
 L_{\tau}(t):={}&\sum_{j=0}^3\lambda_j^{(\tau)}(x)K_j(x)
 (t^2-q^{2\tau+3j}x^3)V_{j,\tau}(t) H_{\tau,\tau+j-1}^{(1)}(t)H_{\tau+2j+1,\tau+6}^{(2)}(t).
 \end{aligned}                
\]
Indeed, the \(j\)-th summand has \(t\)-degree
\[
2+j+2j+2(6-2j)=14-j.
\]
Since the \(j=0\) summand has nonzero leading coefficient
\(\lambda_0^{(\tau)}(x)\), it follows that
\(\deg_t L_\tau(t)=14\).

Finally put
\[
 B_{\tau}(t):=(t-q^{2\tau}x^3)H_{\tau,\tau+2}^{(1)}(t),\qquad
 C_{\tau}(t):=t(t-1)H_{\tau+4,\tau+6}^{(2)}(t).
\]
\begin{lemma}\label{b:lem:certificates}
	There are explicit degree-six polynomials $\mathcal P_\tau(t)$,
	listed in Appendix~\ref{app:coefficients}, such that
\begin{equation}
 L_{\tau}(t)=B_{\tau}(t)\mathcal P_{\tau}(t)
       +\frac{C_{\tau}(t)\mathcal P_{\tau}(q^3t)}{q^{21}x^3},\qquad \tau=1,2.     \label{b:eq:22}
\end{equation}
\end{lemma}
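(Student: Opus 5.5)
Lemma~\ref{b:lem:certificates} is an existence statement backed by explicit data, so the proof is mainly a matter of determining $\mathcal P_\tau$ and then checking an exact polynomial identity in $\mathbb Q(q,x)[t]$.

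\emph{Step 1: degree count.} The identity \eqref{b:eq:22} is linear in the coefficients of $\mathcal P_\tau$. Both sides have $t$-degree $14$: $B_\tau$ has degree $1+6=7$ and $C_\tau$ has degree $2+6=8$. With $\deg\mathcal P_\tau=6$, the two products have degrees $13$ and $14$, and $\deg_tL_\tau=14$ was shown above. This is consistent.

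\emph{Step 2: solve for $\mathcal P_\tau$.} Write $\mathcal P_\tau(t)=\sum_{k=0}^6p_kt^k$ with unknowns $p_k\in\mathbb Q(q,x)$. Comparing the coefficients of $t^0,\dots,t^{14}$ gives $15$ linear equations in $7$ unknowns. The system is triangular at the top. The $t^{14}$ coefficient of the right side is $p_6q^{18}/(q^{21}x^3)$, which forces $p_6=\lambda_0^{(\tau)}(x)\,q^3x^3$. Descending from there determines $p_5,\dots,p_0$ in turn. The remaining eight equations are then consistency conditions.

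These conditions are exactly the content of the lemma. Conceptually, they hold because \eqref{b:eq:22} is the polynomial form, after clearing the common denominator $\Delta_\tau$, of the residue certificate
\[
\sum_j\lambda_j^{(\tau)}(x)K_j(x)(1-q^{3(n+j)+2\tau}/t^2)\frac{\mathcal K_{n+j,\tau}}{\mathcal K_{n,\tau}}=R(t)+\rho_\tau(t)R(q^3t).
\]
This uses \eqref{b:eq:19} and \eqref{b:eq:20}, with $R=\mathcal P_\tau/(\text{suitable denominator})$. The factors $V_{j,\tau}$, $H^{(1)}$ and $H^{(2)}$ in $L_\tau$ are precisely the telescoped products of \eqref{b:eq:19}, and $K_j(x)=(q;q)_{n+j}/(q;q)_n$. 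One could also obtain $\mathcal P_\tau$ by applying partial fractions to the left side and solving the resulting $q$-difference equation, but the linear solve above is more direct.

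\emph{Step 3: verification.} With the coefficients recorded in Appendix~\ref{app:coefficients}, I would expand $L_\tau(t)-B_\tau(t)\mathcal P_\tau(t)-C_\tau(t)\mathcal P_\tau(q^3t)/(q^{21}x^3)$ and check, exactly in Maple with \texttt{normal}/\texttt{expand}, that every coefficient vanishes identically, treating $q,x,t$ as independent indeterminates. This mirrors the checks in Lemmas~\ref{a:lem:certificates} and~\ref{b:lem:sum}.

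The main obstacle is the size of the coefficients, not any conceptual gap. The terms $\lambda^{(2)}_j$, with $\nu$ of degree $4$ in $x$, make $\mathcal P_2$ large. I would therefore first confirm that the overdetermined system is consistent at sample specializations of $(q,x)$, and only then run the symbolic verification.
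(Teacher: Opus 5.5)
Your proposal is correct and follows the paper's proof essentially step for step. The paper uses the same top-down triangular solve: its $\kappa_{\tau,k}=q^{21-3k}x^3[t^{k+8}]R_k(t)$ fixes each coefficient from the $t^{k+8}$ term, where only the $C_\tau$ part contributes a new unknown, and gives $\kappa_{\tau,6}=q^3x^3\lambda_0^{(\tau)}(x)$ exactly as you found. It then confirms the full identity, including the eight remaining consistency conditions, by an exact Maple \texttt{normal} computation in $\mathbb Q(q,x,t)$. Your ``conceptual'' remark only restates what the lemma is used for and is not an independent reason it holds, but you correctly rest the proof on the symbolic check.
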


\begin{proof}
	We regard $q$, $x$, and $t$ as algebraically independent
	indeterminates.
		For completeness, the seven coefficients of
	$\mathcal P_\tau(t)$ can  be recovered directly from
	$L_\tau(t)$.  Start with
	\[
	R_6(t)=L_\tau(t).
	\]
	For $k=6,5,\ldots,0$, define
	\[
	\kappa_{\tau,k}
	:=
	q^{21-3k}x^3[t^{k+8}]R_k(t),
	\]
	and
	\[
	R_{k-1}(t)
	:=
	R_k(t)
	-
	\kappa_{\tau,k}
	\left(
	t^kB_\tau(t)
	+
	\frac{q^{3k}t^kC_\tau(t)}
	{q^{21}x^3}
	\right).
	\]
	Then
	\[
	\mathcal P_\tau(t)
	=
	\sum_{k=0}^6\kappa_{\tau,k}t^k.
	\]
	This descending procedure reproduces exactly the polynomials listed
	in   Appendix~\ref{app:coefficients}.

	Substituting these explicit polynomials into \eqref{b:eq:22},
	the identity is verified exactly in Maple using the
	\texttt{normal} command.  More precisely, for each $\tau=1,2$,
	Maple returns zero for
	\[
	\texttt{normal}\!\left(
	L_\tau(t)
	-
	B_\tau(t)\mathcal P_\tau(t)
	-
	\frac{C_\tau(t)\mathcal P_\tau(q^3t)}
	{q^{21}x^3}
	\right).
	\]
	Hence \eqref{b:eq:22} holds identically in the rational function
	field $\mathbb Q(q,x,t)$.  No numerical specialization of
	$q$, $x$, or $t$, and no truncated $q$-series computation, is
	involved.  The Maple source code and complete output are available
	at 
	\begin{center}
		\url{https://github.com/Ernest-Xia/Kanade--Russell-identities-Lemma-6.3}.
	\end{center}
\end{proof}

To connect \eqref{b:eq:22} to residues, define
\[
 \mathcal R_{\tau}(t):=\frac{B_{\tau}(t)\mathcal P_{\tau}(t)}{\Delta_{\tau}(t)}.
\]
The elementary identity
\[
 h_e^{(d)}(q^3t)=q^6h_{e-3}^{(d)}(t)
\]
and \eqref{b:eq:20} give
\begin{equation}
 \rho_{\tau}(t)\mathcal R_{\tau}(q^3t)
 =\frac{C_{\tau}(t)\mathcal P_{\tau}(q^3t)}{q^{21}x^3\Delta_{\tau}(t)}.                \label{b:eq:24}
\end{equation}

Let
\[
 T_{\tau}(t)=\sum_{j=0}^3\lambda_j^{(\tau)}(x)K_j(x)
 (1-q^{2\tau+3j}x^3/t^2)\frac{\mathcal K_{n+j,\tau}(t)}{\mathcal K_{n,\tau}(t)},
 \qquad x=q^n.
\]
Iterating \eqref{b:eq:19} shows \(\Delta_{\tau} (t) T_{\tau}(t)=L_{\tau}(t)\). Therefore \eqref{b:eq:22}--\eqref{b:eq:24} give
\[
 T_{\tau}(t)=\mathcal R_{\tau}(t)+\rho_{\tau}(t)\mathcal R_{\tau}(q^3t).
\]
Multiplication by \(\mathcal K_{n,\tau}(t)\), followed by \eqref{b:eq:16}, proves
\begin{equation}
 \sum_{j=0}^3\lambda_j^{(\tau)}(q^n)Z_{n+j}^{(\tau)}=0
 \quad(n\ge0).                                               \label{b:eq:25}
\end{equation}
In this last step,
\((q;q)_{n+j}/(q;q)_n=K_j(q^n)\), which accounts for the normalization. This proves the recurrence at every order, rather than at finitely many sampled orders.

\subsection{Boundary estimates, boundary uniqueness and completion}

For the nodes defining $Z_n^{(\tau)}$, apply
Lemma~\ref{lem:dd-expansion} with
\[
 d=3,\quad c=3n+2\tau,\quad
 \alpha_i=\omega q^{n+\tau+i},\quad
 \beta_i=\omega^2q^{2n+\tau-i}.
\]
Thus the evaluated factors are
\[
 d_i(j)=(1-\omega q^{n+\tau+i+3j})
        (1-\omega^2q^{2n+\tau-i+3j}).
\]

The finite-product argument and the coefficientwise passage to the
limit are the same as in Lemma~2.2. Indeed, for
\(0\le i\le n\) and \(j\ge0\), both exponents
$
n+\tau+i+3j$ and $
2n+\tau-i+3j
$
are at least \(n+\tau\). Hence
\[
d_i(j)^{-1}=1+O(q^{n+\tau}).
\]
Consequently, replacing every evaluated reciprocal factor by \(1\)
up to \(q\)-order \(n+\tau-1\) gives
\[
 \begin{aligned}
 \Phi_{3,3n+2\tau}[z_0,\ldots,z_n]
 &=\sum_{0\le k_1\le\cdots\le k_n}q^{3(k_1+\cdots+k_n)}
  +O(q^{n+\tau})\\
 &=\frac1{(q^3;q^3)_n}+O(q^{n+\tau}).
 \end{aligned}                                
\]
The last equality follows on writing \(h_1=k_1\), \(h_j=k_j-k_{j-1}\) and summing the resulting independent geometric series. Complex coefficients \(\omega,\omega^2\) do not affect these formal order estimates.

Since
\[
 \frac{\mathcal E_3(q;q)_n}{(q^3;q^3)_n}
 =\frac{(q^{3n+3};q^3)_\infty}{(q^{n+1};q)_\infty}
 =1+O(q^{n+1}),
\]
we obtain
\begin{equation}
 Z_n^{(\tau)}=1+O(q^{n+1}),\qquad \tau=1,2.                        \label{b:eq:28}
\end{equation}
The two sum-side families satisfy the same sufficient bounds:
\begin{equation}
 Y_n^{(1)}=1+O(q^{2n+2}),\qquad
 Y_n^{(2)}=1+O(q^{n+1}).                                    \label{b:eq:29}
\end{equation}

 For fixed \(n\), the Lagrange formula for \eqref{b:eq:1} is
 meromorphic at \(q=0\), with at most a finite-order pole.
 Representation \eqref{eq:dd-expansion} shows that its negative Laurent
 coefficients cancel, so \(Z_n^{(\tau)}\) extends analytically
 to \(q=0\). Since both sides of \eqref{b:eq:25} are analytic near
 \(q=0\) and agree for all sufficiently small positive real \(q\),
 the identity theorem implies that \eqref{b:eq:25} also holds as an
 identity in \(\mathbb C[[q]]\).

\begin{proof}[Proof of Theorem~\ref{thm:cyclotomic}]
For either value of \(\tau\), let \(e_n=Z_n^{(\tau)}-Y_n^{(\tau)}\). Equations \eqref{b:eq:11} and \eqref{b:eq:25} imply
\[
 \begin{pmatrix}e_n\\e_{n+1}\\e_{n+2}\end{pmatrix}
 =\mathsf C_n^{(\tau)}\begin{pmatrix}e_{n+1}\\e_{n+2}\\e_{n+3}\end{pmatrix},
\]
where the first row of \(\mathsf C_n^{(\tau)}\) is
\[
 -\frac1{\lambda_0^{(\tau)}(q^n)}
 (\lambda_1^{(\tau)}(q^n),\lambda_2^{(\tau)}(q^n),\lambda_3^{(\tau)}(q^n)),
\]
and its last two rows are \((1,0,0)\), \((0,1,0)\).
All entries belong to \(\mathbb C[[q]]\): for \(\tau=1\) the leading coefficient is 1; for \(\tau=2\) it is \(1+q^{n+2}+q^{2n+5}\), a formal unit. Consequently matrix multiplication cannot lower the least \(q\)-degree.

By \eqref{b:eq:28} and \eqref{b:eq:29}, 
 we have
\[
e_n=O(q^{n+1}).
\]
Consequently,
\[
\begin{pmatrix}
	e_N\\ e_{N+1}\\ e_{N+2}
\end{pmatrix}
=
\begin{pmatrix}
	O(q^{N+1})\\ O(q^{N+2})\\ O(q^{N+3})
\end{pmatrix}
=O(q^{N+1}),
\]
where the final order estimate is understood componentwise.
Iterating the recurrence from any fixed initial level \(n\) to an arbitrarily large level \(N\) shows that \(e_n\) vanishes to arbitrarily high order. 
 Every coefficient of \(e_n\) is therefore zero. This proves
 Theorem~\ref{thm:cyclotomic}. 
 \end{proof}
 
 \begin{proof}[Proofs of \eqref{eq:kr4} and \eqref{eq:kr5}]
 Setting   \(n=0\) in \eqref{b:eq:1} and using  the factorization
 \[
 (1-u)(1-\omega u)(1-\omega^2u)=1-u^3
 \]
 gives
 \begin{equation}
 	Z_0^{(\tau)}=\mathcal E_3 \Phi_{3, 2\tau}[z_0]
 	=\frac{\mathcal E_3}{(\omega q^{\tau},\omega^2q^{\tau};q^3)_\infty}
 	=\mathcal E_3\frac{(q^{\tau};q^3)_\infty}{(q^{3\tau};q^9)_\infty}.                   \label{b:eq:4}
 \end{equation}
  The  specialization of Theorem~\ref{thm:cyclotomic} at \(n=0\),
 together with \eqref{b:eq:4}, gives  
 \begin{equation}
 	S(1,2)=\frac1{(q^2;q^3)_\infty(q^3;q^9)_\infty}
 	=\frac1{(q^2,q^3,q^5,q^8;q^9)_\infty},                   \label{b:eq:5}
 \end{equation}
 and 
 \begin{equation}
 	\begin{aligned}
 		(1+q)S(2,4)+q^2S(3,7)
 		&=\frac1{(q,q^4,q^6,q^7;q^9)_\infty}.
 	\end{aligned}                          \label{b:eq:6}
 \end{equation}
 Equation \eqref{b:eq:5} is precisely  \eqref{eq:kr4}.  
 
 We note from
 Lemma~\ref{lem:contiguous} with $(a,b)=(1,4)$ that
 \[
 S(1,4)-S(2,4)=q^2S(3,7),
 \]
 and hence
 \begin{align}\label{eq:fifth-equivalent}
 S(1,4)+qS(2,4)
 =
 (1+q)S(2,4)+q^2S(3,7).
 \end{align}
  Identity \eqref{eq:kr5} follows from 
   \eqref{b:eq:6} and  \eqref{eq:fifth-equivalent}.

All final sums and products are analytic throughout \(|q|<1\). Their formal equalities thus extend as analytic equalities to that disk. No numerical limit of a growing matrix product is needed.
\end{proof}

\section{Proofs of the three Wang--Wang dual identities}\label{sec:wang-wang}

Throughout the dual sections below, we work with the negative-mixed-term
quadratic form
\[
Q(r,s):=r^2-3rs+3s^2.
\]
Thus
\[
\mathfrak{N}(a,b)
=
\sum_{r,s\geq0}
\frac{q^{Q(r,s)+ar+bs}}
{(q;q)_r(q^3;q^3)_s}.
\]
This quadratic form is the dual counterpart of the positive-mixed-term
form $r^2+3rs+3s^2$ occurring in the definition of $S(a,b)$ in
\eqref{eq:S}.
Moreover,
\[
Q(r,s)
=
\left(r-\frac32s\right)^2+\frac34s^2,
\]
so it is positive definite.

The three identities \eqref{eq:ww1}--\eqref{eq:ww3} follow from
bilinear identities between $S(a,b)$ and $\mathfrak N(a,b)$.
We prove these bilinear identities first, using only contiguous
relations and the boundary uniqueness principle of
Lemma~\ref{lem:boundary-unique}. Product evaluations enter only in
Section~\ref{sec:wang-wang-products}.

\subsection{The bilinear identities and the common recurrence}

\begin{theorem}\label{ww:bilinear}
For every integer $n\ge0$, define
\begin{align*}
 \alpha_n&:=S(3n,3n),&\beta_n&:=S(3n+1,3n+3),&\gamma_n&:=S(3n+2,3n+3),\\
 \widehat\alpha_n&:=S(0,3n),&\widehat\beta_n&:=S(1,3n+3),&\widehat\gamma_n&:=S(2,3n+3).
\end{align*}
Then
\begin{align}
 \mathfrak N(0,3n)&=\alpha_n\widehat\alpha_n+q^{3n+1}\beta_n\widehat\gamma_n,\label{ww:bilinX}\\
 \mathfrak N(-1,3n+3)&=\beta_n\widehat\beta_n+\gamma_n\widehat\alpha_n,\label{ww:bilinY}\\
 \mathfrak N(1,3n)&=\alpha_n\widehat\beta_n-q^{3n+1}\gamma_n\widehat\gamma_n.\label{ww:bilinZ}
\end{align}
In particular,
\begin{align}
 \mathfrak N(0,0)&=S(0,0)^2+qS(1,3)S(2,3),\label{ww:at0X}\\
 \mathfrak N(-1,3)&=S(1,3)^2+S(0,0)S(2,3),\label{ww:at0Y}\\
 \mathfrak N(1,0)&=S(0,0)S(1,3)-qS(2,3)^2.\label{ww:at0Z}
\end{align}
\end{theorem}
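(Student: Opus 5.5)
The plan is to prove \eqref{ww:bilinX}--\eqref{ww:bilinZ} simultaneously by the recurrence-plus-boundary method of Lemma~\ref{lem:boundary-unique}. The recursion runs in $n$. Put
\[
\mathbf X_n:=\bigl(\mathfrak N(0,3n),\ \mathfrak N(-1,3n+3),\ \mathfrak N(1,3n)\bigr)^T,
\]
and let $\mathbf Y_n$ be the vector of the three bilinear right-hand sides. The specialisations \eqref{ww:at0X}--\eqref{ww:at0Z} are then the case $n=0$, since $\alpha_0=\widehat\alpha_0=S(0,0)$, $\beta_0=\widehat\beta_0=S(1,3)$ and $\gamma_0=\widehat\gamma_0=S(2,3)$.

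\emph{Step 1 (contiguous relations for $\mathfrak N$).} The index shifts used in Lemma~\ref{lem:contiguous}, applied now to $Q(r,s)=r^2-3rs+3s^2$, give
\[
\mathfrak N(a,b)-\mathfrak N(a+1,b)=q^{a+1}\mathfrak N(a+2,b-3),\qquad
\mathfrak N(a,b)-\mathfrak N(a,b+3)=q^{b+3}\mathfrak N(a-3,b+6).
\]
These hold because $Q(r+1,s)=Q(r,s)+2r+1-3s$ and $Q(r,s+1)=Q(r,s)-3r+6s+3$. Combining them as in the derivation of \eqref{a:eq:3}, I would express $\mathbf X_n=\widetilde M_n\mathbf X_{n+1}$ with $\widetilde M_n\in\operatorname{Mat}_3(\mathbb Z[q])$. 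If a unit leading coefficient appears instead, I would use the companion form from the Remark after Lemma~\ref{lem:boundary-unique}. I expect $\widetilde M_n$ to be closely related to the transpose of $M_n$ with $q^{3n}$-type weights, reflecting the duality $\mathcal A_+\leftrightarrow\mathcal A_-$.

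\emph{Step 2 (recurrences for the six $S$-families).} From \eqref{2-1}--\eqref{2-2}, derive two $3\times3$ recurrences:
\[
(\alpha_n,\beta_n,\gamma_n)^T=P_n(\alpha_{n+1},\beta_{n+1},\gamma_{n+1})^T,\qquad
(\widehat\alpha_n,\widehat\beta_n,\widehat\gamma_n)^T=\widehat P_n(\widehat\alpha_{n+1},\widehat\beta_{n+1},\widehat\gamma_{n+1})^T.
\]
The second family moves only in $b$, so \eqref{2-2} together with \eqref{2-1} at fixed small $a$ suffices. The first mixes $a$ and $b$ exactly as in \eqref{a:eq:3}. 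Write $\mathbf Y_n$ componentwise as $\boldsymbol\alpha_n^TB^{(i)}_n\widehat{\boldsymbol\alpha}_n$, where each $B^{(i)}_n$ is a $3\times3$ matrix with entries $0,1,\pm q^{3n+1}$. Substituting the two recurrences turns $\mathbf Y_n$ into bilinear forms $\boldsymbol\alpha_{n+1}^T(P_n^TB^{(i)}_n\widehat P_n)\widehat{\boldsymbol\alpha}_{n+1}$. The identity $\mathbf Y_n=\widetilde M_n\mathbf Y_{n+1}$ is then the finite matrix identity
\[
P_n^TB^{(i)}_n\widehat P_n=\sum_k(\widetilde M_n)_{ik}B^{(k)}_{n+1}.
\]
This is to be checked modulo the linear relations among the nine products that follow from \eqref{2-1}--\eqref{2-2}; these are the ``row-vector certificates'' alluded to in the introduction. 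This step is the main obstacle. The matrices need not match entrywise, so I would first try to choose $P_n$ and $\widehat P_n$ in the triangular shape of $M_n$, and only if that fails add explicit multiples of the contiguous relations as a correction. The check is a finite polynomial identity in $q$ and $x=q^{3n}$.

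\emph{Step 3 (boundary).} As $n\to\infty$, every term with $s\ge1$ in $\mathfrak N(a,3n)$ is $O(q^{3n-C})$, because $Q\ge0$. Hence
\[
\mathfrak N(0,3n)\to\mathcal G,\qquad \mathfrak N(1,3n)\to\mathcal H,\qquad \mathfrak N(-1,3n+3)\to\sum_r q^{r^2-r}/(q;q)_r=\mathcal G+\mathcal H.
\]
On the other side, $\alpha_n,\beta_n,\gamma_n=1+O(q^{3n+1})$, while $\widehat\alpha_n,\widehat\beta_n,\widehat\gamma_n$ equal $\mathcal G$, $\mathcal H$ and $\sum_r q^{r^2+2r}/(q;q)_r$ up to $O(q^{3n})$. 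Therefore $\mathbf X_n-\mathbf Y_n=O(q^{3n-C})$ for a fixed constant $C$, and Lemma~\ref{lem:boundary-unique} gives $\mathbf X_n=\mathbf Y_n$ for all $n$. Since $\mathfrak N(-1,\cdot)$ may start with negative powers, I would multiply the vectors by a fixed power of $q$ so that everything lies in $\mathbb C[[q]]$ before applying the lemma.
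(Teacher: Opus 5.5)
Your skeleton is the paper's: a recurrence for $(\mathfrak N(0,3n),\mathfrak N(-1,3n+3),\mathfrak N(1,3n))^T$, the same recurrence for the bilinear triple, a boundary estimate, and Lemma~\ref{lem:boundary-unique}. For the first of these the paper obtains $\mathsf D_n$ of \eqref{D_n-def}, with entries in $\mathbb Z[q]$ and $\det\mathsf D_n=-q^{3n+3}$. Your Step~3 is fine. In fact $\mathfrak N(-1,3n+3)$ is already a power series, since $Q(r,s)-r\ge -1$ for $s\ge1$, so no renormalisation is needed.

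The gap is Step~2, which carries essentially all the content. You do not show that the bilinear triple satisfies the dual recurrence. You only propose to compute $P_n^{T}B^{(i)}_n\widehat P_n$ and compare, with an unspecified correction ``if that fails''. Nothing in the proposal explains why these particular bilinear combinations should be compatible with the dual recurrence, so as written the argument is not closed.

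The missing idea is to factor the bilinear triple through two \emph{linear} certificates. The row vectors $r_n=(-\gamma_n,\alpha_n,-\beta_n)$ and $s_n=(-\widehat\beta_n,q^{3n+1}\widehat\gamma_n,\widehat\alpha_n)$ satisfy
\[
r_n\mathsf D_n=-q^{3n+3}r_{n+1},\qquad s_n\mathsf D_n=s_{n+1},
\]
and each of these is checked from three contiguous relations. The three right-hand sides of \eqref{ww:bilinX}--\eqref{ww:bilinZ} are exactly the components of the cross product $\operatorname{cr}(r_n,s_n)$. The identity $\operatorname{cr}(rM,sM)=\operatorname{adj}(M)\operatorname{cr}(r,s)$, together with $\det\mathsf D_n=-q^{3n+3}$, then gives $\operatorname{cr}(r_n,s_n)=\mathsf D_n\operatorname{cr}(r_{n+1},s_{n+1})$ at once. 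This replaces a $27$-entry bilinear verification by two $3$-component linear ones, and it explains the shape of the bilinear forms.

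Your brute-force route would also succeed if carried out. With $\widetilde M_n=\mathsf D_n$, the matrix identity $P_n^{T}B^{(i)}_n\widehat P_n=\sum_k(\mathsf D_n)_{ik}B^{(k)}_{n+1}$ holds entrywise for formal level-$(n+1)$ variables, as a formal consequence of the two certificates. So no correction ``modulo relations'' is needed.

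Be aware that $\widehat P_n$ is not polynomial, because
\[
q^{3n+1}\widehat\gamma_n=(1+q^{3n+1})\widehat\alpha_{n+1}-\widehat\beta_{n+1}-(q+q^{3n+2}+q^{3n+3})\widehat\gamma_{n+1}.
\]
The negative power disappears only because $\widehat\gamma_n$ enters the bilinear forms with the weight $q^{3n+1}$. This is why the paper runs the hatted family forwards, as $s_{n+1}=s_n\mathsf D_n$. Finally, $\mathsf D_n$ is not a transpose of $M_n$, though nothing in your argument depends on that guess.
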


The proof of Theorem~\ref{ww:bilinear} will be completed
at the end of Section~7.3, after establishing the common recurrence,
the row-vector certificates, and the required boundary estimates.

 The positive-mixed-term relations \eqref{2-1} and \eqref{2-2} were established in
 Section 2. We first record the corresponding contiguous relations for
 the negative-mixed-term family.
 
 \begin{lemma}
 	For all integers $a$ and $b$, direct index shifts give
 		\begin{align}
 			\mathfrak{N}(a,b)-\mathfrak{N}(a+1,b)
 			&=q^{a+1}\mathfrak{N}(a+2,b-3),
 			\label{eq:dual-contiguous-a}\\
 			\mathfrak{N}(a,b)-\mathfrak{N}(a,b+3)
 			&=q^{b+3}\mathfrak{N}(a-3,b+6).
 			\label{eq:dual-contiguous-b}
 		\end{align}
 \end{lemma}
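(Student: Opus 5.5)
The plan is to repeat the proof of Lemma~\ref{lem:contiguous} line by line, with the quadratic form $Q(r,s)=r^2-3rs+3s^2$ in place of $r^2+3rs+3s^2$. One preliminary point needs attention. Here $a$ and $b$ are arbitrary integers, so every series involved is a formal Laurent series and not a power series. As noted in Section~\ref{sec:preliminaries}, positive definiteness of $Q$ makes $Q(r,s)+ar+bs$ bounded below and tending to $+\infty$ as $r+s\to\infty$. Hence each coefficient receives contributions from only finitely many pairs $(r,s)$. Termwise subtraction and reindexing are therefore legitimate coefficientwise. Analytically, the same manipulations are justified on $0<|q|<1$ by absolute convergence.

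For \eqref{eq:dual-contiguous-a}, we subtract termwise and obtain the factor $1-q^r$. This factor kills the $r=0$ terms and, for $r\ge1$, cancels against $(q;q)_r$ to leave $(q;q)_{r-1}$. We then shift $r\mapsto r+1$ and use
\[
Q(r+1,s)+a(r+1)+bs=Q(r,s)+(a+2)r+(b-3)s+a+1 .
\]
The change of sign in the mixed term is what turns the shift $b+3$ of \eqref{2-1} into $b-3$. This yields $q^{a+1}\mathfrak N(a+2,b-3)$.

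For \eqref{eq:dual-contiguous-b}, the difference produces the factor $1-q^{3s}$. This factor cancels against $(q^3;q^3)_s$ to leave $(q^3;q^3)_{s-1}$ for $s\ge1$. We then shift $s\mapsto s+1$ and use
\[
Q(r,s+1)+ar+b(s+1)=Q(r,s)+(a-3)r+(b+6)s+b+3,
\]
which gives $q^{b+3}\mathfrak N(a-3,b+6)$.

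The only step that needs care is the exponent bookkeeping: $-3r(s+1)$ contributes $-3r$, and $3(s+1)^2$ contributes $6s+3$. The other thing to confirm is that no convergence issue arises for negative parameters, and that was already settled in the preliminaries. There is no genuine obstacle beyond these checks.
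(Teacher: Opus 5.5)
Your proposal is correct and follows the paper's proof exactly. Both arguments subtract termwise to produce the factors $1-q^r$ and $1-q^{3s}$, cancel these against $(q;q)_r$ and $(q^3;q^3)_s$, shift $r\mapsto r+1$ and $s\mapsto s+1$, and use the same two exponent identities. Your added remark, that for arbitrary integers $a,b$ the manipulations are legitimate coefficientwise because $Q$ is positive definite, is a sound justification; the paper handles this point in its preliminaries rather than in the proof itself.
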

 
 \begin{proof}
 	By the definition of $\mathfrak{N}(a,b)$,
 	\begin{align*}
 		\mathfrak{N}(a,b)-\mathfrak{N}(a+1,b)
 		&=
 		\sum_{r,s\geq 0}
 		\frac{q^{r^2-3rs+3s^2+ar+bs}}
 		{(q;q)_r(q^3;q^3)_s}
 		-
 		\sum_{r,s\geq 0}
 		\frac{q^{r^2-3rs+3s^2+(a+1)r+bs}}
 		{(q;q)_r(q^3;q^3)_s}\\
 		&=
 		\sum_{\substack{r\geq 1\\ s\geq 0}}
 		\frac{q^{r^2-3rs+3s^2+ar+bs}(1-q^r)}
 		{(q;q)_r(q^3;q^3)_s}\\
 		&=
 		\sum_{\substack{r\geq 1\\ s\geq 0}}
 		\frac{q^{r^2-3rs+3s^2+ar+bs}}
 		{(q;q)_{r-1}(q^3;q^3)_s}\\
 		&=
 		\sum_{r,s\geq 0}
 		\frac{q^{(r+1)^2-3(r+1)s+3s^2+a(r+1)+bs}}
 		{(q;q)_r(q^3;q^3)_s}.
 	\end{align*}
 	Since
 	\[
 	(r+1)^2-3(r+1)s+3s^2+a(r+1)+bs
 	=
 	r^2-3rs+3s^2+(a+2)r+(b-3)s+a+1,
 	\]
 	we obtain
 	\begin{align*}
 		\mathfrak{N}(a,b)-\mathfrak{N}(a+1,b)
 		&=
 		q^{a+1}
 		\sum_{r,s\geq 0}
 		\frac{q^{r^2-3rs+3s^2+(a+2)r+(b-3)s}}
 		{(q;q)_r(q^3;q^3)_s}\\
 		&=
 		q^{a+1}\mathfrak{N}(a+2,b-3),
 	\end{align*}
 	which proves \eqref{eq:dual-contiguous-a}.
 	
 	Similarly,
 	\begin{align*}
 		\mathfrak{N}(a,b)-\mathfrak{N}(a,b+3)
 		&=
 		\sum_{r,s\geq 0}
 		\frac{q^{r^2-3rs+3s^2+ar+bs}}
 		{(q;q)_r(q^3;q^3)_s}
 		-
 		\sum_{r,s\geq 0}
 		\frac{q^{r^2-3rs+3s^2+ar+(b+3)s}}
 		{(q;q)_r(q^3;q^3)_s}\\
 		&=
 		\sum_{\substack{r\geq 0\\ s\geq 1}}
 		\frac{q^{r^2-3rs+3s^2+ar+bs}(1-q^{3s})}
 		{(q;q)_r(q^3;q^3)_s}\\
 		&=
 		\sum_{\substack{r\geq 0\\ s\geq 1}}
 		\frac{q^{r^2-3rs+3s^2+ar+bs}}
 		{(q;q)_r(q^3;q^3)_{s-1}}\\
 		&=
 		\sum_{r,s\geq 0}
 		\frac{q^{r^2-3r(s+1)+3(s+1)^2+ar+b(s+1)}}
 		{(q;q)_r(q^3;q^3)_s}.
 	\end{align*}
 	Since
 	\[
 	r^2-3r(s+1)+3(s+1)^2+ar+b(s+1)
 	=
 	r^2-3rs+3s^2+(a-3)r+(b+6)s+b+3,
 	\]
 	it follows that
 	\begin{align*}
 		\mathfrak{N}(a,b)-\mathfrak{N}(a,b+3)
 		&=
 		q^{b+3}
 		\sum_{r,s\geq 0}
 		\frac{q^{r^2-3rs+3s^2+(a-3)r+(b+6)s}}
 		{(q;q)_r(q^3;q^3)_s}\\
 		&=
 		q^{b+3}\mathfrak{N}(a-3,b+6),
 	\end{align*}
 	which proves \eqref{eq:dual-contiguous-b}.
 \end{proof}
  
  Put
\[
 \mathsf X_n:=\mathfrak N(0,3n),\qquad \mathsf Y_n:=\mathfrak N(-1,3n+3),\qquad \mathsf Z_n:=\mathfrak N(1,3n).
\]
Equations \eqref{eq:dual-contiguous-a} and \eqref{eq:dual-contiguous-b} give
\begin{align}
 \mathsf Y_n&=\mathsf X_{n+1}+\mathsf Z_n,\label{ww:NX1}\\
 \mathsf Z_n&=q^{3n+2}\mathsf X_{n+1}+q^{3n+3}\mathsf Y_{n+1}+\mathsf Z_{n+1},\label{ww:NX2}\\
 \mathsf X_n&=(1+q^{3n+1})\mathsf Y_n-\mathsf Z_{n+1}.\label{ww:NX3}
\end{align}
For \eqref{ww:NX2}, use
\[
\mathsf Z_n-\mathsf Z_{n+1}=q^{3n+3}\mathfrak N(-2,3n+6)
\]
 and
\[
\mathfrak N(-2,3n+6)=\mathsf Y_{n+1}+q^{-1}\mathsf X_{n+1}.
\]
For \eqref{ww:NX3}, the identities
\[
 \mathsf X_n-\mathsf Z_n=q\mathfrak N(2,3n-3),\ \ 
 \mathfrak N(2,3n-3)-\mathfrak N(2,3n)=q^{3n}\mathsf Y_n,\ \ 
 q\mathfrak N(2,3n)=\mathsf X_{n+1}-\mathsf Z_{n+1}
\]
together with \eqref{ww:NX1} suffice. Thus
\begin{equation}\label{ww:Drec}
 \begin{pmatrix}\mathsf X_n\\\mathsf Y_n\\\mathsf Z_n\end{pmatrix}
 =\mathsf D_n\begin{pmatrix}\mathsf X_{n+1}\\\mathsf Y_{n+1}\\\mathsf Z_{n+1}\end{pmatrix},
\end{equation}
where
\begin{equation} \label{D_n-def}
 \mathsf D_n:=\begin{pmatrix}
		(1+q^{3n+1})(1+q^{3n+2})&q^{3n+3}(1+q^{3n+1})&q^{3n+1}\\
		1+q^{3n+2}&q^{3n+3}&1\\
		q^{3n+2}&q^{3n+3}&1
	\end{pmatrix}.
\end{equation}
Subtracting the third row from the second and expanding along
the resulting row $(1,0,0)$ gives
\begin{align}
\det\mathsf D_n
=-\det\begin{pmatrix}
	q^{3n+3}(1+q^{3n+1})&q^{3n+1}\\
	q^{3n+3}&1
\end{pmatrix}
=-q^{3n+3}.\label{7-14}
\end{align}
All entries of $\mathsf D_n$ belong to $\mathbb C[[q]]$.

\subsection{Two exact row-vector certificates}

\begin{lemma}\label{lem:row-vector-certificates}
	Let
	\[
	r_n=(-\gamma_n,\alpha_n,-\beta_n),
	\qquad
	s_n=(-\widehat{\beta}_n,
	q^{3n+1}\widehat{\gamma}_n,
	\widehat{\alpha}_n).
	\]
	 Then, for every $n\geq 0$,
	\begin{equation}\label{eq:row-vector-certificates}
		r_n \mathsf D_n=-q^{3n+3}r_{n+1},
		\qquad
		s_n\mathsf D_n=s_{n+1},
	\end{equation}
	where $\mathsf D_n$ is defined by \eqref{D_n-def}.  
\end{lemma}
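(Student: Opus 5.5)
The plan is to verify both identities in \eqref{eq:row-vector-certificates} one column at a time. Each column of $r_n\mathsf D_n=-q^{3n+3}r_{n+1}$ and of $s_n\mathsf D_n=s_{n+1}$ is a three-term linear relation among the relevant $S$-values. I will reduce each such relation to finitely many applications of the contiguous relations \eqref{2-1} and \eqref{2-2} of Lemma~\ref{lem:contiguous}. No product evaluation and no uniqueness argument is needed here, since everything is an exact identity between sums.

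\emph{The vector $r_n$.} First I record three basic relations for $\alpha_n=S(3n,3n)$, $\beta_n=S(3n+1,3n+3)$ and $\gamma_n=S(3n+2,3n+3)$:
\begin{align*}
\gamma_n-\alpha_{n+1}&=q^{3n+3}\beta_{n+1},\\
\alpha_n-\beta_n&=q^{3n+1}\gamma_n+q^{3n+3}\beta_{n+1}.
\end{align*}
\begin{itemize}
\item The first relation is \eqref{2-1} applied at $(3n+2,3n+3)$.
\item For the second, split $\alpha_n-\beta_n$ as $\bigl(S(3n,3n)-S(3n+1,3n)\bigr)+\bigl(S(3n+1,3n)-S(3n+1,3n+3)\bigr)$ and apply \eqref{2-1} and \eqref{2-2}.
\item The third column of $r_n\mathsf D_n$ reads $\alpha_n-\beta_n-q^{3n+1}\gamma_n=q^{3n+3}\beta_{n+1}$, which is exactly the second relation.
\item After dividing by $q^{3n+3}$, the second column reads $\alpha_n-\beta_n-(1+q^{3n+1})\gamma_n=-\alpha_{n+1}$. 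This follows by subtracting the first relation from the second.
\end{itemize}
The first column requires
\[
(1+q^{3n+2})\bigl(\alpha_n-(1+q^{3n+1})\gamma_n\bigr)-q^{3n+2}\beta_n=q^{3n+3}\gamma_{n+1}.
\]
For it I substitute the two relations above to eliminate $\alpha_n$ and $\beta_n$. I then need one further contiguous relation expressing $\beta_{n+1}-\gamma_{n+1}$, namely \eqref{2-1} at $(3n+4,3n+6)$, which gives $q^{3n+5}S(3n+6,3n+9)=q^{3n+5}\alpha_{n+2}+\dots$. Alternatively, and more simply, the first column equals $(1+q^{3n+2})$ times the third-column expression minus a multiple of the second. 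I will look for the combination $(\text{col }1)=(1+q^{3n+2})(\text{col }3)-q^{-?}\ldots$ directly from the entries of $\mathsf D_n$, and reduce to the identity $\beta_{n+1}+q\,(\ldots)=\gamma_{n+1}+\ldots$, that is, \eqref{2-1} at level $n+1$.

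\emph{The vector $s_n$.} Here $\widehat\alpha_n=S(0,3n)$, $\widehat\beta_n=S(1,3n+3)$ and $\widehat\gamma_n=S(2,3n+3)$, so the first parameter is fixed in each sequence and the second moves in steps of $3$. The tools are:
\begin{itemize}
\item $S(a,b)-S(a,b+3)=q^{b+3}S(a+3,b+6)$, from \eqref{2-2};
\item $S(0,b)-S(1,b)=qS(2,b+3)$ and $S(1,b)-S(2,b)=q^2S(3,b+3)$, from \eqref{2-1}.
\end{itemize}
The third column, $-\widehat\beta_n q^{3n+1}+q^{3n+1}\widehat\gamma_n+\widehat\alpha_n=\widehat\alpha_{n+1}$, becomes $\widehat\alpha_n-\widehat\alpha_{n+1}=q^{3n+1}(\widehat\beta_n-\widehat\gamma_n)$. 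Both sides equal $q^{3n+3}S(3,3n+6)$, by \eqref{2-2} and by \eqref{2-1} respectively. The second column, after the common factor $q^{3n+3}$ is removed, reads
\[
-(1+q^{3n+1})\widehat\beta_n+q^{3n+1}\widehat\gamma_n+\widehat\alpha_n=\widehat\gamma_{n+1}.
\]
I reduce it using $\widehat\alpha_n-\widehat\beta_n=\bigl(S(0,3n)-S(1,3n)\bigr)+\bigl(S(1,3n)-S(1,3n+3)\bigr)$, which gives two shifted terms. The first column is handled in the same way, with one more application of \eqref{2-2} to $S(1,\cdot)$ and $S(2,\cdot)$.

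\emph{Expected obstacle.} I expect the main difficulty to be the bookkeeping in the first column of each identity, where the entry $(1+q^{3n+1})(1+q^{3n+2})$ forces second-order contiguous relations. To keep this clean, I will first prove the second- and third-column identities, and then derive the first column as an exact linear combination of those columns with the same coefficients that relate the columns of $\mathsf D_n$. Note that $\text{col}_1-(1+q^{3n+2})\text{col}_2/q^{3n+3}$ has the simple form $(0,0,q^{3n+2}-(1+q^{3n+2}))^T$ in its last entries, so only one new contiguous relation is needed. Every identity is exact in $\mathbb C[[q]]$ because the sums involved have nonnegative parameters.
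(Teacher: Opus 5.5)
Your route is the paper's: you check $r_n\mathsf D_n$ and $s_n\mathsf D_n$ componentwise using only \eqref{2-1} and \eqref{2-2}. Your arguments for the second and third components of $r_n\mathsf D_n$, and for the third component of $s_n\mathsf D_n$, coincide with the paper's.

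\textbf{The gap: the first components are not proved.} These are the only nontrivial part, and you leave them as ``I will look for the combination''. Your shortcut of obtaining the first column as a combination of the other two cannot work, since $\det\mathsf D_n=-q^{3n+3}\neq0$. What your last computation actually shows is $\operatorname{col}_1=(1+q^{3n+2})q^{-3n-3}\operatorname{col}_2-e_3$. Given the second components, the first components are therefore equivalent to
\[
\beta_n=(1+q^{3n+2})\alpha_{n+1}+q^{3n+3}\gamma_{n+1},
\qquad
\widehat\alpha_n-\widehat\beta_{n+1}=(q+q^{3n+3})\widehat\gamma_{n+1}.
\]
Neither is a single contiguous relation. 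The first does not follow from the two relations you recorded, because those never involve $\gamma_{n+1}$. To prove it, use
\[
\beta_n-\gamma_n=q^{3n+2}S(3n+3,3n+6),\quad
\alpha_{n+1}-S(3n+3,3n+6)=q^{3n+6}S(3n+6,3n+9),\quad
\beta_{n+1}-\gamma_{n+1}=q^{3n+5}S(3n+6,3n+9).
\]
Combined with $\gamma_n-\alpha_{n+1}=q^{3n+3}\beta_{n+1}$, the two $S(3n+6,3n+9)$ terms cancel, because $q^{3n+3}q^{3n+5}=q^{3n+2}q^{3n+6}$. The mechanism is this cancellation. Your proposed rewriting $S(3n+6,3n+9)=\alpha_{n+2}+\cdots$ confuses $S(3n+6,3n+9)$ with $\alpha_{n+2}=S(3n+6,3n+6)$ and only introduces level-$(n+2)$ terms.

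For the second relation, telescope through $\widehat\alpha_{n+1}$ and $\widehat\beta_n$:
\[
\widehat\alpha_n-\widehat\alpha_{n+1}=q^{3n+3}S(3,3n+6),\quad
\widehat\alpha_{n+1}-\widehat\beta_n=q\widehat\gamma_{n+1},\quad
\widehat\beta_n-\widehat\beta_{n+1}=q^{3n+6}S(4,3n+9).
\]
Then conclude with $S(3,3n+6)+q^3S(4,3n+9)=\widehat\gamma_{n+1}$, which is \eqref{2-1} at $(2,3n+6)$.

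\textbf{A slip in the second component of $s_n\mathsf D_n$.} The middle entry of $s_{n+1}$ is $q^{3n+4}\widehat\gamma_{n+1}$. After removing $q^{3n+3}$, the target is therefore $q\widehat\gamma_{n+1}$, not $\widehat\gamma_{n+1}$; as you wrote it, the identity is false. With the factor $q$ restored, it follows at once from your third-component identity. The left side becomes $\widehat\alpha_{n+1}-\widehat\beta_n=S(0,3n+3)-S(1,3n+3)=q\widehat\gamma_{n+1}$, which is also the middle relation used above. With these two first-component relations supplied and the factor corrected, your argument becomes the paper's proof.
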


\begin{proof}
	We verify the two identities separately, using only   \eqref{2-1} and \eqref{2-2}.
	
	For the first identity, the following three relations are sufficient:
	\begin{align}
		\alpha_n-\beta_n
		&=
		q^{3n+1}\gamma_n+q^{3n+3}\beta_{n+1},
		\label{eq:first-cert-1}\\
		\gamma_n
		&=
		\alpha_{n+1}+q^{3n+3}\beta_{n+1},
		\label{eq:first-cert-2}\\
		\beta_n
		&=
		(1+q^{3n+2})\alpha_{n+1}
		+q^{3n+3}\gamma_{n+1}.
		\label{eq:first-cert-3}
	\end{align}
	Indeed, \eqref{eq:first-cert-1} follows by inserting
	$S(3n+1,3n)$ between $\alpha_n$ and $\beta_n$.
	Relation \eqref{eq:first-cert-2} is \eqref{2-1} with
	$(a,b)=(3n+2,3n+3)$.
	
%	To prove \eqref{eq:first-cert-3}, put
%	\[
%	T_n=S(3n+3,3n+6),
%	\qquad
%	J_n=S(3n+6,3n+9).
%	\]

	Then \eqref{2-1} and \eqref{2-2} give
	\begin{align*}
	\beta_n-\gamma_n&=q^{3n+2}S(3n+3,3n+6),
	\\
	\alpha_{n+1}-S(3n+3,3n+6)&=q^{3n+6}S(3n+6,3n+9),
	\\
	\beta_{n+1}-\gamma_{n+1}&=q^{3n+5}S(3n+6,3n+9).
	\end{align*}
	Consequently,
	\begin{align*}
		\beta_n-\alpha_{n+1}
		&=
		q^{3n+2}S(3n+3,3n+6) +q^{3n+3}\beta_{n+1}\\
		&=
		q^{3n+2}\alpha_{n+1}
		+q^{3n+3}\gamma_{n+1}\\
		&\quad+
		\bigl(q^{3n+3}q^{3n+5}
		-q^{3n+2}q^{3n+6}\bigr)S(3n+6,3n+9)\\
		&=
		q^{3n+2}\alpha_{n+1}
		+q^{3n+3}\gamma_{n+1},
	\end{align*}
	which proves \eqref{eq:first-cert-3}.
	
	We now multiply $r_n$ by $\mathsf D_n$. The third component is
	\begin{align*}
		-q^{3n+1}\gamma_n+\alpha_n-\beta_n
		&=
		q^{3n+3}\beta_{n+1},
	\end{align*}
	by \eqref{eq:first-cert-1}. The second component is
	\begin{align*}
		&q^{3n+3}
		\bigl(
		\alpha_n-\beta_n-(1+q^{3n+1})\gamma_n
		\bigr) =
		q^{3n+3}
		\bigl(q^{3n+3}\beta_{n+1}-\gamma_n\bigr) =
		-q^{3n+3}\alpha_{n+1},
	\end{align*}
	where \eqref{eq:first-cert-1} and
	\eqref{eq:first-cert-2} were used.
	
	For the first component, we have
	\begin{align*}
		 -(1+q^{3n+1})(1+q^{3n+2})\gamma_n
		&+(1+q^{3n+2})\alpha_n
		-q^{3n+2}\beta_n\\
		&=
		(1+q^{3n+2})
		\bigl(\alpha_n-(1+q^{3n+1})\gamma_n\bigr)
		-q^{3n+2}\beta_n.
	\end{align*}
	By \eqref{eq:first-cert-1},
	\[
	\alpha_n-(1+q^{3n+1})\gamma_n
	=
	\beta_n-\gamma_n+q^{3n+3}\beta_{n+1}.
	\]
	Hence the first component equals
	\begin{align*}
		&(1+q^{3n+2})
		\bigl(\beta_n-\gamma_n+q^{3n+3}\beta_{n+1}\bigr)
		-q^{3n+2}\beta_n\\
		&\qquad \qquad =
		\beta_n-(1+q^{3n+2})\gamma_n
		+q^{3n+3}(1+q^{3n+2})\beta_{n+1}.
	\end{align*}
	Using \eqref{eq:first-cert-2} and
	\eqref{eq:first-cert-3}, this becomes
	$
	q^{3n+3}\gamma_{n+1}.
	$
	Therefore, 
	\[
	r_n \mathsf D_n
	=
	\bigl(
	q^{3n+3}\gamma_{n+1},
	-q^{3n+3}\alpha_{n+1},
	q^{3n+3}\beta_{n+1}
	\bigr)
	=
	-q^{3n+3}r_{n+1}.
	\]
	
	We next prove the second identity. The required relations are
	\begin{align}
		\widehat{\alpha}_{n+1}
		&=
		\widehat{\alpha}_n
		+q^{3n+1}
		\bigl(
		\widehat{\gamma}_n-\widehat{\beta}_n
		\bigr),
		\label{eq:second-cert-1}\\
		\widehat{\alpha}_{n+1}-\widehat{\beta}_n
		&=
		q\widehat{\gamma}_{n+1},
		\label{eq:second-cert-2}\\
		\widehat{\alpha}_n-\widehat{\beta}_{n+1}
		&=
		(q+q^{3n+3})\widehat{\gamma}_{n+1}.
		\label{eq:second-cert-3}
	\end{align}
	For \eqref{eq:second-cert-1}, the contiguous relations give
	\[
	\widehat{\alpha}_n-\widehat{\alpha}_{n+1}
	=
	q^{3n+3}S(3,3n+6),
	\]
	and
	\[
	\widehat{\beta}_n-\widehat{\gamma}_n
	=
	q^2S(3,3n+6).
	\]
Combining the above two identities gives 
 \eqref{eq:second-cert-1}.
	
	Equation \eqref{eq:second-cert-2} is \eqref{2-1} with
	$(a,b)=(0,3n+3)$.

Equation \eqref{2-2} gives 
	\begin{align*}
	\widehat{\alpha}_n-\widehat{\alpha}_{n+1}
	&=
	q^{3n+3}S(3,3n+6),
\\
	\widehat{\beta}_n-\widehat{\beta}_{n+1}
	&=
	q^{3n+6}S(4,3n+9),
	\end{align*}
	and
	\[
	\widehat{\gamma}_{n+1}=S(3,3n+6)+q^3S(4,3n+9).
	\]
	Combining these identities with
	\eqref{eq:second-cert-2} gives
	\[
	\widehat{\alpha}_n-\widehat{\beta}_{n+1}
	=
	(q+q^{3n+3})\widehat{\gamma}_{n+1},
	\]
	as required.
	
	We now compute the three components of $s_n \mathsf D_n$.
	The third component is
	\begin{align*}
		-q^{3n+1}\widehat{\beta}_n
		+q^{3n+1}\widehat{\gamma}_n
		+\widehat{\alpha}_n
		=
		\widehat{\alpha}_{n+1},
	\end{align*}
	by \eqref{eq:second-cert-1}.
	
	The second component is
	\begin{align*}
		&q^{3n+3}
		\Bigl(
		\widehat{\alpha}_n
		-(1+q^{3n+1})\widehat{\beta}_n
		+q^{3n+1}\widehat{\gamma}_n
		\Bigr)\\
		&=
		q^{3n+3}
		\bigl(
		\widehat{\alpha}_{n+1}
		-\widehat{\beta}_n
		\bigr)\\
		&=
		q^{3n+4}\widehat{\gamma}_{n+1},
	\end{align*}
	by \eqref{eq:second-cert-1} and
	\eqref{eq:second-cert-2}.
	
	Finally, the first component is
	\begin{align*}
		&-(1+q^{3n+1})(1+q^{3n+2})
		\widehat{\beta}_n
		+q^{3n+1}(1+q^{3n+2})
		\widehat{\gamma}_n
		+q^{3n+2}\widehat{\alpha}_n.
	\end{align*}
	From \eqref{eq:second-cert-1} and
	\eqref{eq:second-cert-2},
	\[
	(1+q^{3n+1})\widehat{\beta}_n
	-q^{3n+1}\widehat{\gamma}_n
	=
	\widehat{\alpha}_n-q\widehat{\gamma}_{n+1}.
	\]
	Therefore the first component becomes
	\begin{align*}
		-(1+q^{3n+2})
		\bigl(
		\widehat{\alpha}_n-q\widehat{\gamma}_{n+1}
		\bigr)
		+q^{3n+2}\widehat{\alpha}_n
		&=
		-\widehat{\alpha}_n
		+q(1+q^{3n+2})\widehat{\gamma}_{n+1}\\
		&=
		-\widehat{\alpha}_n
		+(q+q^{3n+3})\widehat{\gamma}_{n+1}\\
		&=
		-\widehat{\beta}_{n+1},
	\end{align*}
	where the last equality follows from
	\eqref{eq:second-cert-3}. Hence
	\[
	s_n \mathsf D_n
	=
	\bigl(
	-\widehat{\beta}_{n+1},
	q^{3n+4}\widehat{\gamma}_{n+1},
	\widehat{\alpha}_{n+1}
	\bigr)
	=
	s_{n+1}.
	\]
	This completes the proof.
\end{proof}

\subsection{The cross product and the boundary}

For row vectors
$r=(r_1,r_2,r_3)$ and $s=(s_1,s_2,s_3)$, write
their usual cross product as the column vector
\[
\operatorname{cr}(r,s)
=
\begin{pmatrix}
	r_2s_3-r_3s_2\\
	r_3s_1-r_1s_3\\
	r_1s_2-r_2s_1
\end{pmatrix}.
\]
Thus, from
\[
r_n=(-\gamma_n,\alpha_n,-\beta_n),\qquad
s_n=(-\widehat{\beta}_n,
q^{3n+1}\widehat{\gamma}_n,\widehat{\alpha}_n),
\]
we obtain
\begin{equation}\label{ww:cross}
 \widetilde{V}_n:=\operatorname{cr}(r_n,s_n)
 =\begin{pmatrix}
 \alpha_n\widehat\alpha_n+q^{3n+1}\beta_n\widehat\gamma_n\\
 \beta_n\widehat\beta_n+\gamma_n\widehat\alpha_n\\
 \alpha_n\widehat\beta_n-q^{3n+1}\gamma_n\widehat\gamma_n
 \end{pmatrix}.
\end{equation}
The elementary identity
\[
 \operatorname{cr}(rM,sM)=\operatorname{adj}(M)\operatorname{cr}(r,s)
\]
for $3\times3$ matrices, together with \eqref{eq:row-vector-certificates}, gives
\[
 -q^{3n+3}\widetilde{V}_{n+1}=\operatorname{adj}(\mathsf D_n)\widetilde{V}_n.
\]
Multiplying by $\mathsf D_n$ and using   $\det \mathsf D_n=-q^{3n+3}$ from
   \eqref{7-14} yields
\begin{equation}\label{ww:crossrec}
 \widetilde{V}_n=\mathsf D_n\widetilde{V}_{n+1}.
\end{equation}
Cancellation of the nonzero monomial $q^{3n+3}$ is valid in the integral domain
$\mathbb C[[q]]$; no assertion about negative powers is needed.

 Separating the
$s=0$ terms in \eqref{eq:dual-sum} gives
\begin{equation}\label{ww:Nbound}
 \begin{pmatrix}\mathsf X_n\\\mathsf Y_n\\\mathsf Z_n\end{pmatrix}
 =\begin{pmatrix}\mathcal G(q)\\\mathcal G(q)+\mathcal H(q)\\\mathcal H(q)\end{pmatrix}+O(q^{3n+1}),
\end{equation}
where $\mathcal G(q)$ and $\mathcal H(q)$ are defined by \eqref{RR-1}
 and \eqref{RR-2}, respectively.
 
 To justify \eqref{ww:Nbound}, first separate the $s=0$
 terms in the three dual sums.  For $\mathsf X_n=\mathfrak{N}(0,3n)$,
 the contribution with $s=0$ is
 $\mathcal G(q)$. 
 For $\mathsf Z_n=\mathfrak{N}(1,3n)$, the corresponding contribution is
  $\mathcal H(q)$. 
 For the middle component,
 \[
 \sum_{r\geq0}\frac{q^{r(r-1)}}{(q;q)_r}=\mathcal G(q)+\mathcal H(q).
 \]
 Indeed, subtracting $\mathcal G(q)$ from the left-hand side gives
 \[
 \sum_{r\geq1}
 \frac{q^{r(r-1)}(1-q^r)}{(q;q)_r}
 =
 \sum_{r\geq1}
 \frac{q^{r(r-1)}}{(q;q)_{r-1}}
 =
 \mathcal H(q),
 \]
 after shifting $r\mapsto r+1$.
 
 It remains to estimate the terms with $s\geq1$.  Their least possible
 $q$-orders in $\mathsf X_n$, $\mathsf Y_n$, and $\mathsf Z_n$ are at least
 $3n+1$, $3n+2$, and $3n+2$, respectively.  Hence
 \[
 \mathsf X_n=\mathcal G(q)+O(q^{3n+1}),\qquad
 \mathsf Y_n=\mathcal G(q)+\mathcal H(q)+O(q^{3n+2}),\qquad
 \mathsf Z_n=\mathcal H(q)+O(q^{3n+2}),
 \]
 which yields the common vector estimate
 \eqref{ww:Nbound}.

On the positive-cross-term side,
\begin{align*}
 \alpha_n&=1+O(q^{3n+1}),&
 \beta_n&=1+O(q^{3n+2}),&
 \gamma_n&=1+O(q^{3n+3}),\\
 \widehat\alpha_n&=\mathcal G(q)+O(q^{3n+3}),&
 \widehat\beta_n&=\mathcal H(q)+O(q^{3n+6}),&
 \widehat\gamma_n&\in\mathbb C[[q]].
\end{align*}
These estimates follow directly from the definition of $S(a,b)$.
For $\alpha_n$, $\beta_n$, and $\gamma_n$, the term $(r,s)=(0,0)$
contributes $1$, while the least positive $q$-degrees are
$3n+1$, $3n+2$, and $3n+3$, respectively. For the hatted families, separate the $s=0$ terms. 
We have
\[
\widehat{\alpha}_n=\mathcal G(q)+O(q^{3n+3}),
\qquad
\widehat{\beta}_n=\mathcal H(q)+O(q^{3n+6}),
\]
because the least degrees among the terms with $s\geq1$ are
$3n+3$ and $3n+6$, respectively.  Finally,
$\widehat{\gamma}_n=S(2,3n+3)\in\mathbb{C}[[q]]$
(and in fact $\widehat{\gamma}_n=1+O(q^3)$).

Hence \eqref{ww:cross} satisfies the same boundary estimate:
\begin{equation}\label{ww:crossbound}
 \widetilde{V}_n=\begin{pmatrix}\mathcal G(q)\\\mathcal G(q)+\mathcal H(q)\\\mathcal H(q)\end{pmatrix}+O(q^{3n+1}).
\end{equation}

 \begin{proof}[Proof of Theorem~\ref{ww:bilinear}]
 	Set
 	\[
 	 \mathsf E_n:=
 	\begin{pmatrix}
 		 \mathsf X_n\\
 		 \mathsf Y_n\\
 	 \mathsf	Z_n
 	\end{pmatrix}
 	-\widetilde V_n.
 	\]
 	The dual-sum vector and the bilinear vector satisfy the same
 	recurrence.  By \eqref{ww:Drec} and  \eqref{ww:crossrec}, 
 	\[
 	 \mathsf E_n= \mathsf D_n  \mathsf E_{n+1}.
 	\]
 	
 	Moreover, from \eqref{ww:Nbound} and \eqref{ww:crossbound}, we know 
 	the two vectors have the same boundary behavior. 
 	Hence
 	\[
 	 \mathsf E_n=O(q^{3n+1}).
 	\]
 	
 	For any $M>n$, iteration of the common recurrence gives
 	\[
 	\mathsf E_n
 	=
 	\mathsf D_n \mathsf D_{n+1}\cdots \mathsf D_{M-1} \mathsf E_M.
 	\]
 	All entries of $\mathsf D_j$ belong to $\mathbb C[[q]]$, so multiplication
 	by these matrices cannot lower the $q$-adic order.  Since
 	\[
 	\mathsf E_M\in q^{3M+1}\mathbb C[[q]]^3,
 	\]
 	we obtain
 	\[
 	\mathsf E_n\in q^{3M+1}\mathbb C[[q]]^3
 	\]
 	for every $M>n$.  Letting $M$ be arbitrarily large shows that
 	every coefficient of $\mathsf E_n$ vanishes.  Thus
 	\[
 	\begin{pmatrix}
 		\mathsf X_n\\
 		\mathsf Y_n\\
 		\mathsf	Z_n
 	\end{pmatrix}
 	=\widetilde V_n.
 	\]
 	 These are precisely the three assertions of
 	Theorem~\ref{ww:bilinear}.
 	
 	Finally, setting $n=0$ and using
 	\[
 	\alpha_0=\widehat{\alpha}_0=S(0,0),\qquad
 	\beta_0=\widehat{\beta}_0=S(1,3),\qquad
 	\gamma_0=\widehat{\gamma}_0=S(2,3),
 	\]
 	gives the three stated special cases.
 \end{proof}

\subsection{Recovery of Conjecture 3.6 of Wang and Wang}
\label{sec:wang-wang-products}

\begin{proof}  
	 The first three dual identities \eqref{eq:ww1}--\eqref{eq:ww3} settle
	 Conjecture~3.6 of Wang and Wang.  Their original product forms
	 will be recovered at the end of this section.
 The first three cases of Theorem~\ref{thm:main}, proved in
Sections~\ref{sec:first-three}--\ref{sec:first-proof}, give
$S(0,0)=P_1$, $S(1,3)=P_2$, and $S(2,3)=P_3$.
Substitution into \eqref{ww:at0X}--\eqref{ww:at0Z} proves
\eqref{eq:ww1}--\eqref{eq:ww3}. Expanding $P_i$ in the first two identities recovers the first two
product forms stated in Conjecture~3.6 of Wang and Wang~\cite{wangwang}.
To recover their third displayed product form, put
\[
 \vartheta_r:=(q^r,q^{9-r};q^9)_\infty\quad(1\le r\le4),
 \qquad \mathcal E_9=\vartheta_3^{-1}.
\]
We need the theta identity
\begin{equation}\label{ww:cubic}
 \vartheta_2^2\vartheta_4-\vartheta_1\vartheta_4^2=q\vartheta_1^2\vartheta_2.
\end{equation}
Here is an explicit standard derivation. Write
$\theta(z;p)=(z,p/z;p)_\infty$, with
$\theta(z_1,\ldots,z_k;p)=\prod_{j=1}^k\theta(z_j;p)$, and use
the Weierstrass addition formula
\cite[Eq.~(2.7)]{koornwinder} in the form
\begin{align}\label{W-identity}
 &\theta(xy,x/y,uv,u/v;p)-\theta(xv,x/v,uy,u/y;p)=\frac{u}{y}\theta(yv,y/v,xu,x/u;p).
\end{align}
Set $p=q^9$, $x=q$, $y=q^3$, $u=q^2$, and $v=q^4$.
Using  
\begin{align}\label{t-1}
\theta(z^{-1};p)=-z^{-1}\theta(z;p),
\qquad
\theta(p/z;p)=\theta(z;p),
\end{align}
this becomes
\[
 q^{-4}(\vartheta_2^2\vartheta_4-\vartheta_1\vartheta_4^2)(q^3,q^6;q^9)_\infty
 =q^{-3}\vartheta_1^2\vartheta_2(q^3,q^6;q^9)_\infty.
\]
Cancellation proves \eqref{ww:cubic}. Since $P_1=\mathcal E_9/\vartheta_1$, $P_2=\mathcal E_9/\vartheta_2$, and
$P_3=\mathcal E_9/\vartheta_4$, it follows that
\begin{align*}
 P_1P_2-qP_3^2
 &=\mathcal E_9^2\left(\frac1{\vartheta_1\vartheta_2}-\frac{q}{\vartheta_4^2}\right)\\
 &=\mathcal E_9^2\left(\frac{\vartheta_2}{\vartheta_1^2\vartheta_4}-\frac{2q}{\vartheta_4^2}\right)\\
 &=\frac{(q^2,q^7;q^9)_\infty}
 {(q,q^3,q^6,q^8;q^9)_\infty^2(q^4,q^5;q^9)_\infty}
 -\frac{2q}{(q^3,q^4,q^5,q^6;q^9)_\infty^2}.
\end{align*}
This is exactly the third product form in Conjecture~3.6
of Wang and Wang~\cite{wangwang}. The formal series identities
also hold analytically for $|q|<1$, by absolute convergence.
\end{proof}

\section{Proof of Li--Wang's dual identity}\label{sec:li-wang}

\subsection{The identity and a parameterized family}

We now prove \eqref{eq:li-wang-conjecture}, the dual of the fourth
Kanade--Russell identity. Unlike the bilinear proof in
Section~\ref{sec:wang-wang}, this proof transfers the finite
certificate already used for \eqref{eq:kr4}.     Recall
\[
 \mathcal E_3=\frac{(q^3;q^3)_\infty}{(q;q)_\infty}
 =\frac1{(q,q^2;q^3)_\infty},
 \qquad \omega^2+\omega+1=0.
\]
Either primitive third root of unity may be used.

The remaining identity of Theorem~\ref{thm:dual-main} can be written as
\begin{equation}\label{lw:eq:target}
 \sum_{r,s\ge0}
 \frac{q^{r^2-3rs+3s^2+s}}{(q;q)_r(q^3;q^3)_s}
 =\frac{(q^6;q^9)_\infty}
 {(q;q^3)_\infty(q^2;q^3)_\infty^2}.
\end{equation}
Equivalently, the denominator on the right is
\((q,q^2,q^2,q^4,q^5,q^5;q^6)_\infty\), so
\eqref{lw:eq:target} is exactly \eqref{eq:li-wang-conjecture}.

For $n\geq0$, define the one-parameter specialization
\begin{equation}\label{lw:eq:Nn}
	\mathfrak N_n(q):=\mathfrak N(n,1),
\end{equation}
so that the linear term in the exponent is $nr+s$.

For \( \iota \in\mathbb C\), put
\begin{equation}\label{lw:eq:theta}
 \Theta_\iota(z):=\prod_{k\ge0}(1-zq^{3k}+\iota q^{6k}).
\end{equation}
Equivalently, if \(z=\alpha+\beta\) and
\(\iota=\alpha\beta\), then
\[
\Theta_\iota(z)=(\alpha,\beta;q^3)_\infty.
\]
For fixed \(0<|q|<1\), this is entire in \(z\) and \(\iota \).  Set
\begin{equation}\label{lw:eq:Wn}
 \begin{aligned}
 c_n^\vee&:=q^{4-3n},\\
 \xi_{n,j}&:=\omega q^{2-2n+j}+\omega^2q^{2-n-j}
                  \qquad(0\le j\le n),\\
 \mathfrak W_n(q)
 &:=\mathcal E_3(-1)^nq^{-3\binom n2}(q;q)_n
   \Theta_{c_n^\vee}[\xi_{n,0},\ldots,\xi_{n,n}].
 \end{aligned}
\end{equation}
The brackets denote the ordinary divided difference defined in
\eqref{eq:dd-definition}.  The nodes are distinct.  Indeed, if
\(\alpha_j=\omega q^{2-2n+j}\), then
\[
 \xi_{n,j}-\xi_{n,k}
 =(\alpha_j-\alpha_k)
 \left(1-\frac{c_n^\vee}{\alpha_j\alpha_k}\right),
 \qquad
 \frac{c_n^\vee}{\alpha_j\alpha_k}
 =\omega q^{n-j-k}.
\]
For \(j\ne k\), neither factor vanishes when \(0<|q|<1\).

\begin{theorem}\label{lw:thm:family}
For every \(n\ge0\) and \(0<|q|<1\),
\begin{equation}\label{lw:eq:family}
                         \mathfrak N_n(q)=\mathfrak W_n(q).
\end{equation}
Both sides extend to formal power series at \(q=0\), with constant
term one.
\end{theorem}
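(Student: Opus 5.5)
We follow the same three-step template used for Theorem~\ref{thm:cyclotomic}: a recurrence in $n$ for the sum side, the same recurrence for the divided-difference side via a residue certificate, and boundary uniqueness through Lemma~\ref{lem:boundary-unique}.

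\emph{Sum-side recurrence.} The contiguous relations \eqref{eq:dual-contiguous-a}--\eqref{eq:dual-contiguous-b} do not preserve the second parameter, so they cannot give a recurrence in $n$ for $\mathfrak N_n=\mathfrak N(n,1)$ directly. We therefore derive one from the functional equations in $z=q^n$, i.e.\ for $F(z)=\sum_{r,s}z^rq^{Q(r,s)+s}/((q;q)_r(q^3;q^3)_s)$. Two instances of \eqref{eq:dual-contiguous-a} give $F(z)-F(qz)=qzF(q^2z;\,b\to b-3)$. Eliminating the shifted $b$-families with \eqref{eq:dual-contiguous-b}, exactly as in the derivation of \eqref{b:eq:8}, should yield a scalar recurrence
\[
\sum_{j=0}^{3}\lambda_j^\vee(q^n)\,\mathfrak N_{n+j}=0 .
\]
Its coefficients should be the $\tau=1$ coefficients $\lambda^{(1)}_j$ after the substitution $x\mapsto x^{-1}$ and a renormalization. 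This matches the dual correspondence $(1,2)\mapsto(0,1)$ and the reflected nodes $\xi_{n,j}$, which are the nodes $z_j$ of \eqref{b:eq:1} with $n\mapsto -n$.

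\emph{Divided-difference side.} We write $\mathfrak W_n$ as a residue sum, as in Lemma~\ref{b:lem:residue}, with $z=t+c_n^\vee/t$ and the kernel $\Theta_{c_n^\vee}(t+c_n^\vee/t)=(t,q^{4-3n}/t;q^3)_\infty$. Since $\Theta$ is entire, the only poles come from the node denominator. The residues at the node preimages $\omega^\epsilon q^{\mathbb Z}$ then reconstruct the Lagrange sum, up to the factor $1/2$.

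The ratio $(t,q^{4-3n}/t;q^3)_\infty\big/(t,q^{4}/t;q^3)_\infty$ is a finite product, now in the numerator. We therefore use the fixed entire function $\Psi(t)=(t,q^{4}/t;q^3)_\infty$ and a rational kernel $\mathcal K^\vee_n(t)$. Lemma~\ref{lem:finite-residue} applies because $\Psi$ has no poles at all, and $-q^3\Psi(q^3t)/\Psi(t)$ is rational. This is the ``convergent direct-product kernel'' mentioned in the introduction.

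The key claim is that $\mathcal K^\vee_{n+j}/\mathcal K^\vee_n$ and the shift ratio $\rho^\vee$ are exactly $\eqref{b:eq:19}$ and $\eqref{b:eq:20}$ with $x\mapsto x^{-1}$, up to explicit monomials. These monomials are absorbed by the prefactor $(-1)^nq^{-3\binom n2}$. If the claim holds, the polynomial identity \eqref{b:eq:22} for $\tau=1$, transformed by $x\mapsto 1/x$ and cleared of denominators, serves as the certificate. Lemma~\ref{lem:finite-residue} then proves the same recurrence for $\mathfrak W_n$. Checking this transfer of the certificate, including that the $x\mapsto1/x$ image of \eqref{b:eq:22} is genuinely the needed identity, is the step I expect to be the main obstacle. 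If the transfer fails, the fallback is to recompute a fresh degree-six $\mathcal P^\vee$ by the descending procedure of Lemma~\ref{b:lem:certificates}.

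\emph{Power-series structure and boundary.} By Lemma~\ref{lem:dd-expansion}-type reasoning, adapted to the entire kernel, the divided difference of $\Theta$ expands as a finite sum of products. This shows that $\mathfrak W_n\in\mathbb C[[q]]$ has constant term $1$ and that $\mathfrak W_n=1+O(q^{e_n})$ with $e_n\to\infty$.

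We need the matching estimate $\mathfrak N_n=1+O(q^{e_n})$. Here $Q(r,s)+nr+s\ge nr$, and the terms with $r=0,\,s\ge1$ contribute $q^{3s^2+s}$, which does not go to $0$ as $n\to\infty$. This seems to break the estimate, so the recurrence must instead be run with $n$ in the direction where the linear term grows. Concretely, we either compare $\mathfrak N_n-\mathfrak W_n$ at large $n$ against the common limit $\sum_s q^{3s^2+s}/(q^3;q^3)_s$, or reduce the recurrence order so that the limit is the only obstruction.

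The leading coefficient $\lambda_0^\vee$ must be a formal unit so that the companion matrix has entries in $\mathbb C[[q]]$. Once the boundary agreement is shown, Lemma~\ref{lem:boundary-unique} gives $\mathfrak N_n=\mathfrak W_n$.
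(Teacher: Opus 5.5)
Your recurrence and residue steps follow the paper's route. Fixing $b=1$ and eliminating $\mathfrak N(a,b-3)$ and $\mathfrak N(a,b-6)$ with \eqref{eq:dual-contiguous-a}--\eqref{eq:dual-contiguous-b} gives \eqref{lw:eq:mainrec} directly. Your entire weight $(t,q^4/t;q^3)_\infty$ is the paper's $\Psi_{\mathrm{LW}}$ with $t$ rescaled by $q^3$. One correction: \eqref{b:eq:22} does not transfer under $x\mapsto x^{-1}$. With $q$ kept fixed, the kernel ratio \eqref{b:eq:19} at $x=q^{-n}$ does not match your actual ratio $\mathcal K^\vee_{n+1}/\mathcal K^\vee_n$ for any rescaling of $t$. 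The right substitution is to rename the algebraic base $q$ to $u$ and specialize $u=q^{-1}$, $x=u^n$. Then no new $\mathcal P^\vee$ is needed.

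The genuine gap is the boundary step. First, your claim $\mathfrak W_n=1+O(q^{e_n})$ is false: $\mathfrak W_n=\mathfrak N_n\to\mathcal R_0(q)=\sum_s q^{3s^2+s}/(q^3;q^3)_s$, as you observed on the sum side. Second, Lemma~\ref{lem:dd-expansion} cannot be adapted here. We have $c_n^\vee=q^{4-3n}$, and the node preimages $\omega q^{2-2n+j}$ carry negative exponents, so the evaluated factors are not $1+O(q^L)$. A product-rule expansion therefore proves neither that negative powers cancel nor any order estimate. The paper instead expands $\Theta_\iota(z)=\sum_m t_m(\iota)z^m$ and uses $z^m[\xi_{n,0},\ldots,\xi_{n,n}]=h_{m-n}(\xi_{n,0},\ldots,\xi_{n,n})$. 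This yields \eqref{lw:eq:W-expansion}, whose $k$-th term has order at least $nk+(3k^2+k)/2$. The coefficients are then controlled by Lemma~\ref{lw:lem:coefficient}, $d_m(q^s)=\mathcal R(q^s;q)+O(q^{3(m+1)+s})$. This also gives the statement's claim that $\mathfrak W_n\in\mathbb C[[q]]$ with constant term one.

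Decisively, matching the common limit cannot close the argument. In \eqref{lw:eq:mainrec} the leading coefficient is $q$ while the next has constant term $-1$, so the companion matrix lies in $q^{-1}\mathbb C[[q]]$ and Lemma~\ref{lem:boundary-unique} does not apply. Renormalizing to $q^{-n}\mathfrak N_n$ makes the leading coefficient a unit, but it divides the error by $q^n$, so the loss reappears. Agreement through $\mathcal R_0$ gives only $e_n=O(q^{n+1})$, hence only $\operatorname{ord}_q e_n\ge (N+1)-(N-n)=n+1$.

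This is a real obstruction, not a weak estimate. Substituting $e_n=q^{n+1}/(1-q)$ into \eqref{lw:eq:mainrec} leaves a residual $-q^{2n+5}-q^{3n+5}/(1-q)$, against individual terms of size $q^{n+2}$. So there is an approximately decaying direction of size $q^{n}$ that the limit cannot detect.

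The missing idea is the two-term expansion \eqref{lw:eq:boundary}. Both sides equal $\mathcal R_0+\frac{q^{n+1}}{1-q}\mathcal R_1+O(q^{2n+2})$. On the sum side this comes from the $r=0,1$ terms. On the product side it comes from the $k=0,1$ terms of \eqref{lw:eq:W-expansion}, together with $\mathcal U_n=1+\frac{q^{n+1}}{1-q}+O(q^{2n+2})$, $h_1=-q^{2-2n}(1-q^{n+1})/(1-q)$, and $\mathcal R(q;q)=\mathcal R(q^4;q)+q\mathcal R(q^7;q)$. Then $e_N=O(q^{2N+2})$ outpaces the loss: $\operatorname{ord}_q e_n\ge 2N+2-(N-n)\to\infty$.
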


Since the quadratic form $Q(r,s)$ is positive definite, 
the sums in \eqref{lw:eq:Nn} converge absolutely and locally uniformly
on \(|q|<1\).  They are also well-defined formal power series.  

%\subsection{The recurrence for the dual sums}

We use the common notation $\mathfrak N(a,b)$ from
\eqref{eq:dual-sum} and the contiguous relations
\eqref{eq:dual-contiguous-a} and \eqref{eq:dual-contiguous-b} established in Section~\ref{sec:wang-wang}.

\begin{lemma}\label{lw:lem:Nrec}
For \(n\ge0\), the sequence \(\mathfrak N_n(q)\) satisfies
\begin{equation}\label{lw:eq:mainrec}
 q\mathfrak N_n(q)-(1+q+q^{2n+3})\mathfrak N_{n+1}(q)
 +(1-q^{n+2})\mathfrak N_{n+2}(q)
 +q^{n+2}\mathfrak N_{n+3}(q)=0.
\end{equation}
\end{lemma}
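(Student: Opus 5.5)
The recurrence \eqref{lw:eq:mainrec} should follow from the two dual contiguous relations \eqref{eq:dual-contiguous-a} and \eqref{eq:dual-contiguous-b} alone, by eliminating one auxiliary family. These relations hold for \emph{all} integers $a,b$, as identities of formal Laurent series. So negative first parameters such as $n-1$ cause no difficulty. No product evaluation or residue argument is needed.

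The plan uses three applications of the contiguous relations, with $\mathfrak N_m=\mathfrak N(m,1)$ and auxiliary series $\mathfrak N(m,-2)$ and $\mathsf H_m:=\mathfrak N(m,4)$.

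First, \eqref{eq:dual-contiguous-a} with $(a,b)=(n,1)$ gives
\[
\mathfrak N_n-\mathfrak N_{n+1}=q^{n+1}\mathfrak N(n+2,-2).
\]
Second, \eqref{eq:dual-contiguous-b} with $(a,b)=(n+2,-2)$ gives
\[
\mathfrak N(n+2,-2)-\mathfrak N_{n+2}=q\,\mathsf H_{n-1}.
\]
Combining these yields the intermediate identity
\[
\mathfrak N_n-\mathfrak N_{n+1}=q^{n+1}\mathfrak N_{n+2}+q^{n+2}\mathsf H_{n-1}.
\tag{$\ast$}
\]
Third, \eqref{eq:dual-contiguous-a} with $(a,b)=(n-1,4)$ gives the difference relation
\[
\mathsf H_{n-1}-\mathsf H_n=q^{n}\mathfrak N_{n+1},
\]
which expresses differences of the auxiliary family in terms of the target family.

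To eliminate $\mathsf H$, divide $(\ast)$ by $q^{n+2}$, which isolates
\[
\mathsf H_{n-1}=q^{-n-2}(\mathfrak N_n-\mathfrak N_{n+1})-q^{-1}\mathfrak N_{n+2}.
\]
Subtract the same expression with $n$ replaced by $n+1$, and equate the result with $q^n\mathfrak N_{n+1}$. Multiplying through by $q^{n+3}$ then gives
\[
q(\mathfrak N_n-\mathfrak N_{n+1})-(\mathfrak N_{n+1}-\mathfrak N_{n+2})-q^{n+2}\mathfrak N_{n+2}+q^{n+2}\mathfrak N_{n+3}=q^{2n+3}\mathfrak N_{n+1}.
\]
Collecting terms produces exactly \eqref{lw:eq:mainrec}.

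I do not expect a genuine obstacle. The only points needing care are bookkeeping ones. One must check the parameter shifts in each contiguous relation, in particular $b-3$ versus $b+6$ and the monomial prefactors $q^{a+1}$ and $q^{b+3}$. One must also note that multiplying by the monomials $q^{\pm(n+2)}$ and $q^{n+3}$ is harmless in the ring of formal Laurent series. The analytic version for $0<|q|<1$ follows from absolute convergence.
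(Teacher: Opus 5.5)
Your derivation is correct: each instance of the contiguous relations checks out, and eliminating $\mathsf H$ gives exactly \eqref{lw:eq:mainrec}. This is essentially the paper's argument, which also uses two applications of \eqref{eq:dual-contiguous-a} and one of \eqref{eq:dual-contiguous-b}; the only difference is that the paper, for general $b$, eliminates the auxiliary levels $\mathfrak N(\cdot,b-3)$ and $\mathfrak N(\cdot,b-6)$, whereas you eliminate $\mathfrak N(\cdot,-2)$ and $\mathfrak N(\cdot,4)$, and the paper's general-$b$ form is reused later with $b=5$.
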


\begin{proof}
Fix \(b\) and put \(F_a=\mathfrak N(a,b)\).  Applying the first
relation   \eqref{eq:dual-contiguous-a} twice gives
\[
 \mathfrak N(a,b-3)=q^{1-a}(F_{a-2}-F_{a-1})
\]
and
\[
 \mathfrak N(a,b-6)
 =q^{3-2a}\{qF_{a-4}-(1+q)F_{a-3}+F_{a-2}\}.
\]
The second relation   \eqref{eq:dual-contiguous-b}, applied at
\((a,b-6)\), says that the difference of these expressions is
\(q^{b-3}F_{a-3}\).  Multiplication by \(q^{2a-3}\), followed by
\(a=n+4\), yields
\[
 qF_n-(1+q+q^{2n+b+2})F_{n+1}
 +(1-q^{n+2})F_{n+2}+q^{n+2}F_{n+3}=0.
\]
Taking \(b=1\) proves \eqref{lw:eq:mainrec}.
\end{proof}

\subsection{Transfer of the fourth-identity certificate}

We use the same finite-product notation as in Section~\ref{sec:cyclotomic-certificates},
with the algebraic parameter \(q\) replaced by an independent
indeterminate \(u\). More precisely, let \(u,x,t\) be independent
indeterminates and define
\begin{align*}
h_e^{(d)}(t;u)
&:=t^2+u^ex^dt+u^{2e}x^{2d},
\\ 
H_{r,s}^{(d)}(t;u)
&:=\prod_{e=r}^{s}h_e^{(d)}(t;u) 
\end{align*}
with empty products equal to one.
Only the resulting finite algebraic identity will later be
specialized at \(u=q^{-1}\); all infinite products retain the
convergent base \(q^3\). 
Set 
\[
 K_j(x;u)=\prod_{\ell=1}^j(1-u^\ell x),\qquad
 V_j(t;u)=\prod_{\ell=0}^{j-1}(t-u^{2+3\ell}x^3)
\]
and
\begin{equation}\label{lw:eq:lambdas}
 \lambda_0(x;u)=1,\ 
 \lambda_1(x;u)=-1-u^2(1+u)x^2,\ 
 \lambda_2(x;u)=u^7x^4-u^5x^3,\ 
 \lambda_3(x;u)=u^{11}x^5.
\end{equation}
 The following polynomials are precisely the \(\tau=1\) objects
of Section~\ref{sec:cyclotomic-certificates} with the algebraic parameter \(q\) replaced by \(u\);
 we write \(D(t)\) for the specialization of \(C_\tau(t)\) at
 \(\tau=1\), in order to avoid conflict with the sequence \(C_n\).  Define 
\begin{equation}\label{lw:eq:certificate-data}
 \begin{aligned}
 \Delta(t)&:=t^2H_{2,7}^{(2)}(t;u),\\
 B(t)&:=(t-u^2x^3)H_{1,3}^{(1)}(t;u),\\
 D(t)&:=t(t-1)H_{5,7}^{(2)}(t;u),\\
 L(t)&:=\sum_{j=0}^3\lambda_j(x;u)K_j(x;u)
 (t^2-u^{2+3j}x^3)V_j(t;u)  H_{1,j}^{(1)}(t;u)
 H_{2j+2,7}^{(2)}(t;u).
 \end{aligned}
\end{equation}
Let
\[
 \mathcal P^\vee(t;u,x)=\left.\mathcal P_1(t)\right|_{q=u},
\]
where the seven coefficients of \(\mathcal P_1(t)\) are listed in
Appendix~\ref{app:coefficients}.  Since \(q,x,t\) were independent
in Section~\ref{sec:cyclotomic-certificates}, the specialization
\(\tau=1\) of \eqref{b:eq:22}, with its algebraic base renamed
from \(q\) to \(u\), gives the polynomial identity
\begin{equation}\label{lw:eq:certificate}
 L(t)=B(t)\mathcal P^\vee(t;u,x)
 +\frac{D(t)\mathcal P^\vee(u^3t;u,x)}{u^{21}x^3}.
\end{equation}
Thus \eqref{lw:eq:certificate} is a finite algebraic identity in the independent
indeterminates $u,x,t$.  In particular, the later specialization
$u=q^{-1}$ is made only in this finite identity; no substitution
$q\mapsto q^{-1}$ is performed in any infinite product, all of which
retain the convergent base $q^3$.

Define 
\begin{equation}\label{lw:eq:Fkernel}
 \mathcal F_n(t;u):=
 \frac{(u^2/t;u^3)_n}
 {t^{n+1}(\omega u^{n+1}/t,\omega^2u^{n+1}/t;u)_{n+1}},
 \qquad
 J_n(t;u):=1-\frac{u^{3n+2}}{t^2},
\end{equation}
and
\[
 \eta(t;u):=u^3\frac{t(t-1)}{t-u^{-1}},\qquad
 \rho(t;u,x):=\frac{t(t-1)}{x^3(t-u^{-1}x^3)}
 \frac{H_{-1,1}^{(2)}(t;u)}{H_{-2,0}^{(1)}(t;u)}.
\]
At \(x=u^n\), cancellation of finite product strings gives
\begin{equation}\label{lw:eq:kernel-shifts}
 \frac{\mathcal F_{n+1}(t;u)}{\mathcal F_n(t;u)}
 =\frac{(t-u^2x^3)h_1^{(1)}(t;u)}
 {h_2^{(2)}(t;u)h_3^{(2)}(t;u)},
 \qquad
 \eta(t;u)\frac{\mathcal F_n(u^3t;u)}{\mathcal F_n(t;u)}
 =\rho(t;u,x).
\end{equation}
The first quotient gains \(1-u^{3n+2}/t\), removes two initial
linear factors, and gains four terminal factors; the pairing
\((t-\omega v)(t-\omega^2v)=t^2+vt+v^2\) gives the displayed
quadratics.  In the second quotient, shifting by three leaves three
factors at each endpoint.  Hence these are rational identities for
every \(n\ge0\), including the cases of overlapping short products.

Put 
\[R(t):=B(t)\mathcal P^\vee(t;u,x)/\Delta(t).\]
  The scaling identity
\[
 h_e^{(d)}(u^3t;u)=u^6h_{e-3}^{(d)}(t;u)
\]
gives
\[
 \rho(t;u,x)R(u^3t)
 =\frac{D(t)\mathcal P^\vee(u^3t;u,x)}{u^{21}x^3\Delta(t)}.
\]
We now derive the rational telescoping identity from
\eqref{lw:eq:certificate} and \eqref{lw:eq:kernel-shifts}.  Recall that $x=u^n$.
Iterating the first identity in \eqref{lw:eq:kernel-shifts}, with
$x$ replaced successively by $x,ux,\ldots,u^{j-1}x$, gives
\[
\frac{\mathcal{F}_{n+j}(t;u)}
{\mathcal{F}_n(t;u)}
=
\frac{
	V_j(t;u)H^{(1)}_{1,j}(t;u)}
{H^{(2)}_{2,\,2j+1}(t;u)},
\qquad 0\leq j\leq3.
\]
Indeed, at the $\ell$-th step the numerator contributes
$
t-u^{2+3\ell}x^3$ and 
$
h^{(1)}_{\ell+1}(t;u),
$
whereas the denominator contributes
$
h^{(2)}_{2+2\ell}(t;u)
h^{(2)}_{3+2\ell}(t;u).
$
Their products over $0\leq\ell\leq j-1$ are precisely
$V_j(t;u)$, $H^{(1)}_{1,j}(t;u)$, and
$H^{(2)}_{2,\,2j+1}(t;u)$, respectively.

Moreover,
\[
J_{n+j}(t;u)
=
1-\frac{u^{3(n+j)+2}}{t^2}
=
\frac{t^2-u^{2+3j}x^3}{t^2}.
\]
Since
\[
\Delta(t)=t^2H^{(2)}_{2,7}(t;u),
\]
we obtain
\begin{align*}
	\Delta(t)J_{n+j}(t;u)
	\frac{\mathcal{F}_{n+j}(t;u)}
	{\mathcal{F}_n(t;u)}
	&=
	t^2H^{(2)}_{2,7}(t;u)
	\frac{t^2-u^{2+3j}x^3}{t^2}
	\frac{
		V_j(t;u)H^{(1)}_{1,j}(t;u)}
	{H^{(2)}_{2,\,2j+1}(t;u)}
	\\
	&=
	\bigl(t^2-u^{2+3j}x^3\bigr)
	V_j(t;u)H^{(1)}_{1,j}(t;u)
	H^{(2)}_{2j+2,7}(t;u).
\end{align*}
Multiplying by $\lambda_j(x;u)K_j(x;u)$ and summing over
$j=0,1,2,3$, the definition of $L(t)$ in \eqref{lw:eq:certificate-data}
therefore gives
\begin{equation}\label{eq:L-from-F}
	\Delta(t)
	\sum_{j=0}^3
	\lambda_j(x;u)K_j(x;u)J_{n+j}(t;u)
	\frac{\mathcal{F}_{n+j}(t;u)}
	{\mathcal{F}_n(t;u)}
	=
	L(t).
\end{equation}

By \eqref{lw:eq:certificate}, equation \eqref{eq:L-from-F} becomes
\[
\sum_{j=0}^3
\lambda_j(x;u)K_j(x;u)J_{n+j}(t;u)
\frac{\mathcal{F}_{n+j}(t;u)}
{\mathcal{F}_n(t;u)}
=
R(t)+\rho(t;u,x)R(u^3t).
\]

Multiplying by $\mathcal{F}_n(t;u)$ gives
\begin{align*}
	\sum_{j=0}^3
	\lambda_j(x;u)K_j(x;u)J_{n+j}(t;u)
	\mathcal{F}_{n+j}(t;u)
	&=
	\mathcal{F}_n(t;u)R(t)+
	\mathcal{F}_n(t;u)\rho(t;u,x)R(u^3t).
\end{align*}
Finally, the second identity in \eqref{lw:eq:kernel-shifts} gives
\[
\mathcal{F}_n(t;u)\rho(t;u,x)
=
\eta(t;u)\mathcal{F}_n(u^3t;u).
\]
Consequently,
\begin{equation}\label{lw:eq:telescoping}
 \begin{aligned}
 &\sum_{j=0}^3\lambda_j(x;u)K_j(x;u)J_{n+j}(t;u)
          \mathcal F_{n+j}(t;u)\\
 &\qquad=\mathcal F_n(t;u)R(t)
 +\eta(t;u)\mathcal F_n(u^3t;u)R(u^3t),
 \qquad x=u^n.
 \end{aligned}
\end{equation}

\subsection{A convergent residue weight and the product recurrence}

Initially let \(0<q<1\) and set \(u=q^{-1}\) only in the finite
identity \eqref{lw:eq:telescoping}. 
Define the convergent residue weight 
\begin{equation}\label{lw:eq:weight}
 \Psi_{\rm LW}(t):=(q^3t,q/t;q^3)_\infty.
\end{equation}
It is holomorphic on \(\mathbb C\setminus \{0\}\), and
\begin{equation}\label{lw:eq:weight-shift}
 \frac{\Psi_{\rm LW}(u^3t)}{\Psi_{\rm LW}(t)}
 =\frac{1-t}{1-q/t},\qquad
 \eta(t;u)=-u^3\frac{\Psi_{\rm LW}(u^3t)}
                         {\Psi_{\rm LW}(t)}.
\end{equation}
Let
\[
 \Lambda_{\rm LW}:
 =\{\omega^\epsilon q^m:\epsilon\in\{1,2\},\ m\in\mathbb Z\}.
\]
For a rational function \(T(t)\), define
\[
 \mathcal L_{\rm LW}(T):=\frac12
 \sum_{t_0\in\Lambda_{\rm LW}}
 \operatorname{Res}_{t=t_0}\bigl(\Psi_{\rm LW}(t)T(t)\bigr).
\]
Only finitely many rational poles contribute.  Since
\(\Lambda_{\rm LW}\) is invariant under multiplication by
\(u^3=q^{-3}\), the local change of variable and finite reindexing
used in Lemma~\ref{lem:finite-residue} give
\begin{equation}\label{lw:eq:residue-cancel}
 \mathcal L_{\rm LW}\bigl(T(t)+\eta(t;u)T(u^3t)\bigr)=0.
\end{equation}

At \(u=q^{-1}\), direct finite-product cancellation yields
\begin{equation}\label{lw:eq:weight-kernel}
 \begin{aligned}
 \Psi_{\rm LW}(t)(u^2/t;u^3)_n
 &=(q^3t,q^{1-3n}/t;q^3)_\infty\\
 &=\Theta_{c_n^\vee}\!
 \left(q^3\left(t+\frac{u^{3n+2}}t\right)\right).
 \end{aligned}
\end{equation}
Set \(\zeta=t+u^{3n+2}/t\).  The two preimages of
\[
 \zeta_j=\omega u^{n+1+j}+\omega^2u^{2n+1-j},
 \qquad 0\le j\le n,
\]
are the corresponding points in
\(t=\omega u^{n+1+j}\) and \(t=\omega^2u^{n+1+j}\), after one list
is reversed.  Moreover,
\[
 \prod_{j=0}^n(\zeta-\zeta_j)
 =t^{n+1}(\omega u^{n+1}/t,\omega^2u^{n+1}/t;u)_{n+1},
 \qquad
 \frac{d\zeta}{dt}=J_n(t;u).
\]
The local residue change therefore produces the corresponding
Lagrange summand for \(\Theta_{c_n^\vee}(q^3\zeta)\).
Each summand occurs at two preimages, which accounts for the factor
\(1/2\).  Reversing the list shows that \(q^3\zeta_j\) are precisely
the nodes \(\xi_{n,j}\) in \eqref{lw:eq:Wn}.  Scaling a divided
difference of order \(n\) consequently gives
\begin{equation}\label{lw:eq:Ldd}
 \mathcal L_{\rm LW}\bigl(J_n\mathcal F_n\bigr)
 =q^{3n}\Theta_{c_n^\vee}
       [\xi_{n,0},\ldots,\xi_{n,n}].
\end{equation}

\begin{lemma}\label{lw:lem:Wrec}
The sequence \(\mathfrak W_n\) satisfies \eqref{lw:eq:mainrec}, with
\(\mathfrak W\) in place of \(\mathfrak N\).
\end{lemma}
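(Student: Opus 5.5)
The plan is to apply the residue functional $\mathcal L_{\rm LW}$ to the finite telescoping identity \eqref{lw:eq:telescoping}, specialized at $u=q^{-1}$ and hence $x=u^n=q^{-n}$. On the right-hand side of \eqref{lw:eq:telescoping}, take $T(t)=\mathcal F_n(t;u)R(t)$. This is a rational function of $t$, since $u=q^{-1}$ makes every factor finite. Then \eqref{lw:eq:residue-cancel} gives $\mathcal L_{\rm LW}$ of the right side equal to $0$. Lemma~\ref{lem:finite-residue} allows auxiliary poles of $R$ of any order, including poles on $\Lambda_{\rm LW}$, so no distinctness check is needed there. On the left-hand side, linearity and \eqref{lw:eq:Ldd} applied at level $n+j$ give
\[
\sum_{j=0}^3\lambda_j(q^{-n};q^{-1})K_j(q^{-n};q^{-1})\,q^{3(n+j)}\,
\Theta_{c_{n+j}^\vee}[\xi_{n+j,0},\ldots,\xi_{n+j,n+j}]=0 .
\]
For this I must confirm that \eqref{lw:eq:Ldd} holds at every level $m=n+j$. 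Since $\Psi_{\rm LW}$ is holomorphic on $\mathbb C\setminus\{0\}$, the only poles of $\Psi_{\rm LW}J_m\mathcal F_m$ on $\Lambda_{\rm LW}$ are the $2m+2$ node preimages $\omega^{\epsilon}u^{m+1+j}$. These are simple, and the derivative $J_m$ is nonzero there by the distinctness of the $\xi_{m,j}$ shown after \eqref{lw:eq:Wn}. So the residue sum is exactly twice the Lagrange formula.

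The next step is to convert each divided difference into $\mathfrak W_{n+j}$ using \eqref{lw:eq:Wn}:
\[
\Theta_{c_m^\vee}[\xi_{m,\cdot}]=\mathfrak W_m\big/\bigl(\mathcal E_3(-1)^mq^{-3\binom m2}(q;q)_m\bigr).
\]
The key elementary fact is
\[
K_j(q^{-n};q^{-1})=\prod_{\ell=1}^j(1-q^{-n-\ell})=(-1)^jq^{-jn-j(j+1)/2}(q^{n+1};q)_j ,
\]
and this Pochhammer factor cancels the ratio $(q;q)_{n+j}/(q;q)_n$. Dividing by the common $j=0$ normalization, the coefficient of $\mathfrak W_{n+j}$ becomes $\lambda_j(q^{-n};q^{-1})\,q^{2jn+j^2+j}$. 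The signs cancel because $(-1)^j$ from $K_j$ meets $(-1)^{j}$ from the ratio of the two $(-1)^m$ normalizations. The power of $q$ collects $-jn-j(j+1)/2$ from $K_j$, $3j$ from the residue scaling, and $3\binom{n+j}2-3\binom n2$ from the normalization.

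Substituting \eqref{lw:eq:lambdas} at $u=q^{-1}$, $x=q^{-n}$ gives the coefficients $1$, $-(1+q^{-1}+q^{2n+2})$, $q^{-1}-q^{n+1}$ and $q^{n+1}$. Multiplying the relation by $q$ yields exactly \eqref{lw:eq:mainrec} with $\mathfrak W$ in place of $\mathfrak N$.

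I expect the main obstacle to be the analytic bookkeeping rather than the algebra. The relation is first proved for fixed real $0<q<1$. I must check that the finitely supported residue sums are legitimate for every $n\ge0$, including the short overlapping products at $n=0,1$; these are covered by the rational cancellation in \eqref{lw:eq:kernel-shifts}. The other delicate point is that the specialization $u=q^{-1}$ enters only the finite identity \eqref{lw:eq:certificate}, never an infinite product. If the recurrence is afterwards wanted as an identity in $\mathbb C[[q]]$, I would argue as in the Remark after Lemma~\ref{lem:boundary-unique}. Each $\mathfrak W_m$ is meromorphic at $q=0$, and once the analytic extension asserted in Theorem~\ref{lw:thm:family} is available, the identity theorem transfers the relation to Taylor series.
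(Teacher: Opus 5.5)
Your proposal is correct and is essentially the paper's proof. Like the paper, you apply \eqref{lw:eq:residue-cancel} to \eqref{lw:eq:telescoping}, evaluate each $\mathcal L_{\rm LW}(J_{n+j}\mathcal F_{n+j})$ by \eqref{lw:eq:Ldd}, and normalize with $(q^{-1};q^{-1})_m=(-1)^mq^{-m(m+1)/2}(q;q)_m$. The paper packages this as $\mathfrak Z_n=\mathcal E_3^{-2}q^{n(n+1)}\mathfrak W_n$, which gives exactly your relative factor $q^{2jn+j^2+j}$ and the coefficients $q,\,-(1+q+q^{2n+3}),\,1-q^{n+2},\,q^{n+2}$.

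One small caution concerns your optional passage to $\mathbb C[[q]]$. Take the analyticity of $\mathfrak W_m$ at $q=0$ from the expansion \eqref{lw:eq:W-expansion} and the order bound \eqref{lw:eq:k-order}. Do not take it from Theorem~\ref{lw:thm:family}, since that theorem's proof uses this lemma.
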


\begin{proof}
Apply \eqref{lw:eq:residue-cancel} to
\eqref{lw:eq:telescoping}.  Since
\[
 K_j(u^n;u)=\frac{(u;u)_{n+j}}{(u;u)_n},
\]
the sequence
\[
 \mathfrak Z_n:=\mathcal E_3^{-1}(u;u)_n
       \mathcal L_{\rm LW}(J_n\mathcal F_n)
\]
satisfies
\(\sum_{j=0}^3\lambda_j(u^n;u)\mathfrak Z_{n+j}=0\).
Using \eqref{lw:eq:Ldd} and
\[
 (q^{-1};q^{-1})_n
 =(-1)^nq^{-n(n+1)/2}(q;q)_n
\]
gives
\begin{equation}\label{lw:eq:normalization}
 \mathfrak Z_n=\mathcal E_3^{-2}q^{n(n+1)}\mathfrak W_n.
\end{equation}
After substituting \eqref{lw:eq:normalization}, dividing out the
common factor, and then multiplying by \(q\), the four recurrence
coefficients are
\[
 q,\qquad -(1+q+q^{2n+3}),\qquad
 1-q^{n+2},\qquad q^{n+2}.
\]
This proves the lemma.
\end{proof}

\subsection{A coefficient estimate for the direct product}

Let 
\[
t_m(\iota)=[z^m]\Theta_\iota (z),\]
 and define
\begin{equation}\label{lw:eq:dm}
 \begin{aligned}
 d_m(\gamma):
 &=(-1)^mq^{-3\binom m2}(q^3;q^3)_m
       t_m(\gamma q^{-3m}),\\
 \mathcal R(\gamma;q):
 &=\sum_{j\ge0}\frac{\gamma^jq^{3j(j-1)}}{(q^3;q^3)_j}.
 \end{aligned}
\end{equation}

\begin{lemma}\label{lw:lem:coefficient}
There are series \(r_{m,j}\in\mathbb Z[[q^3]]\) such that
\begin{equation}\label{lw:eq:rmj}
 d_m(\gamma)=\sum_{j\ge0}\gamma^jq^{3j(j-1)}r_{m,j},
 \quad
 r_{m,j}-\frac1{(q^3;q^3)_j}
 \in q^{3(m+1)}\mathbb Z[[q^3]]
 \quad(j\ge1),
\end{equation}
and \(r_{m,0}=1\).  Hence, for every positive integer \(s\),
\begin{equation}\label{lw:eq:dm-estimate}
 d_m(q^s)=\mathcal R(q^s;q)+O(q^{3(m+1)+s}).
\end{equation}
\end{lemma}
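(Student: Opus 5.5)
The plan is to compute $t_m(\iota)$ by expanding $\Theta_\iota(z)=\prod_{k\ge0}(1-zq^{3k}+\iota q^{6k})$ factor by factor. From each factor we choose $1$, $-zq^{3k}$, or $\iota q^{6k}$. The coefficient of $z^m\iota^j$ is therefore $(-1)^m$ times a sum over disjoint pairs $(A,B)$ of subsets of $\mathbb Z_{\ge0}$ with $|A|=m$, $|B|=j$, each pair weighted by $q^{3\Sigma A+6\Sigma B}$. After the substitution $\iota=\gamma q^{-3m}$ and the normalization in \eqref{lw:eq:dm}, this defines
\[
 r_{m,j}:=q^{-3j(j-1)-3\binom m2-3mj}(q^3;q^3)_m\sum_{\substack{|A|=m,\ |B|=j\\ A\cap B=\emptyset}}q^{3\Sigma A+6\Sigma B}.
\]
This is manifestly a series in $q^3$. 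The case $j=0$ gives $r_{m,0}=q^{-3\binom m2}(q^3;q^3)_m\cdot q^{3\binom m2}/(q^3;q^3)_m=1$. The first task is to check that $r_{m,j}\in\mathbb Z[[q^3]]$, i.e.\ that the prefactor exponent never exceeds the minimum of $3\Sigma A+6\Sigma B$. That minimum is attained at $B=\{0,\dots,j-1\}$, $A=\{j,\dots,j+m-1\}$, where the exponent is exactly $3j(j-1)+3\binom m2+3mj$.

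To get the sharp estimate, I will encode a disjoint pair by the sorted union $c_1<\dots<c_{m+j}$ and the word $w\in\{\mathsf A,\mathsf B\}^{m+j}$ recording which set each $c_i$ belongs to. Writing $c_i=i-1+g_i$ with $0\le g_1\le g_2\le\cdots$, the exponent splits into three parts:
\begin{itemize}
\item a baseline, which equals the minimum above when all $\mathsf B$'s precede all $\mathsf A$'s;
\item a term $3\,\mathrm{inv}(w)$, where $\mathrm{inv}(w)$ counts pairs with an $\mathsf A$ before a $\mathsf B$, since moving a $\mathsf B$ past an $\mathsf A$ costs $6-3$;
\item a gap contribution.
\end{itemize}
Summing over the nondecreasing gaps, the gap contribution becomes $\prod_{i=1}^{m+j}(1-q^{\sigma_i(w)})^{-1}$, where $\sigma_i(w)$ is $3\#\mathsf A+6\#\mathsf B$ counted over the suffix starting at position $i$. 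Hence
\[
 r_{m,j}=(q^3;q^3)_m\sum_{w}q^{3\,\mathrm{inv}(w)}\prod_{i}\frac1{1-q^{\sigma_i(w)}}.
\]

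The comparison with $1/(q^3;q^3)_j$ then proceeds in three steps.
\begin{itemize}
\item Any word with $\mathrm{inv}(w)\ge m+1$ contributes only $O(q^{3(m+1)})$.
\item For a word with $\mathrm{inv}(w)\le m$, some trailing block of $\mathsf A$'s must be long. I intend to show that the suffixes made only of $\mathsf A$'s produce exactly the factors $(1-q^{3k})^{-1}$ that are cancelled by $(q^3;q^3)_m$, up to $O(q^{3(m+1)})$. Every other suffix then has weight $\sigma_i\ge 3(m+1)$, so its factor is $1+O(q^{3(m+1)})$.
\item What remains is $\sum_w q^{3\,\mathrm{inv}(w)}$ restricted to $\mathrm{inv}\le m$. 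This is the Gaussian binomial $\left[\begin{smallmatrix}m+j\\ j\end{smallmatrix}\right]_{q^3}$ truncated, which equals $1/(q^3;q^3)_j+O(q^{3(m+1)})$. The reason is that partitions into at most $j$ parts of size at most $m$ agree with all partitions into at most $j$ parts below degree $m+1$.
\end{itemize}

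Once \eqref{lw:eq:rmj} holds, \eqref{lw:eq:dm-estimate} follows by setting $\gamma=q^s$. The $j=0$ terms agree exactly. For $j\ge1$, the error term is $q^{sj+3j(j-1)}\cdot O(q^{3(m+1)})=O(q^{3(m+1)+s})$. Summing over $j$ is coefficientwise finite because $sj+3j(j-1)\to\infty$.

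The main obstacle is the second step: controlling the gap factors uniformly over words with few inversions. I need to show that the factors not cancelled by $(q^3;q^3)_m$ all have weight at least $3(m+1)$. If the word-by-word bookkeeping becomes too messy, I would fall back on the functional equation
\[
 \Theta_\iota(z)=(1-z+\iota)\,\Theta_{\iota q^6}(zq^3).
\]
Comparing coefficients of $z^m\iota^j$ gives a two-index recurrence for $r_{m,j}$. I would then verify, by induction on $m$, that $r_{m,j}-1/(q^3;q^3)_j$ stays in $q^{3(m+1)}\mathbb Z[[q^3]]$.
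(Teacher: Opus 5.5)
Your word encoding is correct. It gives the exact formula $r_{m,j}=(q^3;q^3)_m\sum_w q^{3\,\mathrm{inv}(w)}\prod_i(1-q^{\sigma_i(w)})^{-1}$, which already proves $r_{m,j}\in\mathbb Z[[q^3]]$, $r_{m,0}=1$, and the minimum-exponent claim. Step~1, Step~3 and the final substitution $\gamma=q^s$ are also fine.

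The gap is Step~2: the statement you say you need to prove is false. A word with $\mathrm{inv}(w)\le m$ need not have a long trailing block of $\mathsf A$'s; that block only has length at least $m-\mathrm{inv}(w)$, which can be $0$. Likewise, the suffixes containing a $\mathsf B$ need not have $\sigma_i\ge 3(m+1)$. Take $j=1$ and $w=\mathsf A^m\mathsf B$. Then $\mathrm{inv}(w)=m$ and there is no trailing $\mathsf A$, so no factor of $(q^3;q^3)_m$ is cancelled. For $m\ge2$ the suffix $\mathsf B$ also has $\sigma=6<3(m+1)$. More generally, for $w=\mathsf A^k\mathsf B\mathsf A^{m-k}$ the factor multiplying $q^{3k}$ equals
\[
\frac{1-q^{3(m-k+1)}}{(1-q^{3(m+1)})(1-q^{3(m+2)})},
\]
which differs from $1$ already at order $3(m-k+1)$. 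So no estimate of the form ``each factor is $1+O(q^{3(m+1)})$'' can hold. The missing order must come from the prefactor $q^{3\,\mathrm{inv}(w)}$.

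The repair is to bound $q^{3\,\mathrm{inv}(w)}(\text{factor}-1)$ jointly.

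\textbf{The $\mathsf A$-suffixes.} Pair the factor $1-q^{3l}$ of $(q^3;q^3)_m$ with the suffix starting at the $l$-th $\mathsf A$ from the right. If $\beta_l$ letters $\mathsf B$ lie to its right, the quotient is $(1-q^{3l})/(1-q^{3(l+2\beta_l)})$. This is exactly $1$ when $\beta_l=0$. If $\beta_l\ge1$, every $\mathsf A$ further left also precedes a $\mathsf B$, so $\mathrm{inv}(w)\ge m-l+1$ and hence $\mathrm{inv}(w)+l\ge m+1$.

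\textbf{The $\mathsf B$-suffixes.} For the $k$-th $\mathsf B$ from the right with $\alpha_k$ letters $\mathsf A$ to its right, $\sigma=3(2k+\alpha_k)$. Each of the $m-\alpha_k$ letters $\mathsf A$ to its left precedes at least $k$ letters $\mathsf B$, so $\mathrm{inv}(w)+2k+\alpha_k\ge k(m-\alpha_k)+2k+\alpha_k\ge m+2$.

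Hence every word contributes $q^{3\,\mathrm{inv}(w)}+O(q^{3(m+1)})$, which also makes Step~1 unnecessary. Your Step~3 then gives $r_{m,j}\equiv\left[\begin{smallmatrix}m+j\\ j\end{smallmatrix}\right]_{q^3}\equiv(q^3;q^3)_j^{-1}$ modulo $q^{3(m+1)}$. Repaired this way, your route is a genuinely different and more explicit proof, since it identifies $r_{m,j}$ with a Gaussian binomial to the required order.

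Your fallback is exactly the paper's argument. The functional equation gives $(1-q^{3m+6j})r_{m,j}=r_{m,j-1}+q^{3j}(1-q^{3m})r_{m-1,j}$. For $\delta_{m,j}=(q^3;q^3)_j^{-1}-r_{m,j}$ this becomes the same recurrence plus the inhomogeneous term $q^{3m+3j}/(q^3;q^3)_{j-1}$. Induction on $m$, then along $j$, starting from $\delta_{m,0}=0$ and $\delta_{0,j}=(q^3;q^3)_j^{-1}-(q^6;q^6)_j^{-1}\in q^3\mathbb Z[[q^3]]$, finishes the proof. That route avoids the combinatorial bookkeeping but yields no closed form.
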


\begin{proof}
In \eqref{lw:eq:theta}, choose \(m\) factors contributing
\(-zq^{3k}\) and \(j\) disjoint factors contributing \(\iota q^{6k}\).
After the normalization in \eqref{lw:eq:dm}, the minimum remaining
exponent of \(q^3\) is \(j(j-1)\).  It is attained by taking the
first \(m+j\) sites and placing the \(j\) sites of weight two first.
This proves the expansion in \eqref{lw:eq:rmj}, with
\[
 r_{m,0}=1,\qquad r_{0,j}=\frac1{(q^6;q^6)_j}.
\]
The product relation
\[
 \Theta_{\iota}(z)=(1-z+\iota)\Theta_{\iota q^6}(q^3z)
\]
gives, for \(m\ge1\),
\[
 d_m(\gamma)
 =(q^{3m}+\gamma)d_m(\gamma q^6)
 +(1-q^{3m})d_{m-1}(\gamma q^3).
\]
Comparison of the coefficients of \(\gamma^j\) gives
\begin{equation}\label{lw:eq:r-recurrence}
 (1-q^{3m+6j})r_{m,j}
 =r_{m,j-1}+q^{3j}(1-q^{3m})r_{m-1,j}.
\end{equation}
Let
\(\delta_{m,j}=(q^3;q^3)_j^{-1}-r_{m,j}\).  Rearranging
\eqref{lw:eq:r-recurrence} yields
\begin{equation}\label{lw:eq:delta-recurrence}
 \begin{aligned}
 (1-q^{3m+6j})\delta_{m,j}
 ={}&\delta_{m,j-1}
 +q^{3j}(1-q^{3m})\delta_{m-1,j}+\frac{q^{3m+3j}}{(q^3;q^3)_{j-1}}.
 \end{aligned}
\end{equation}
For \(m=0\), one has
\(\delta_{0,j}\in q^3\mathbb Z[[q^3]]\), and
\(\delta_{m,0}=0\).  Induction on \(m\), and then along each row
in \(j\), proves
\(\delta_{m,j}\in q^{3(m+1)}\mathbb Z[[q^3]]\).
Indeed, the three terms on the right of
\eqref{lw:eq:delta-recurrence} have orders at least
\(3(m+1)\), \(3(m+j)\), and \(3(m+j)\), while the factor on the
left is a unit.  Substitution of \(\gamma=q^s\) shows that the
smallest error order occurs at \(j=1\), which proves
\eqref{lw:eq:dm-estimate}.
\end{proof}

\subsection{Matching the boundary}

Let \(h_k(\xi_{n,0},\ldots,\xi_{n,n})\) be the complete homogeneous
symmetric polynomial of degree \(k\), with \(h_0=1\).  The monomial
divided-difference formula gives
\[
 z^m[\xi_{n,0},\ldots,\xi_{n,n}]
 =\begin{cases}
 0,&m<n,\\
 h_{m-n}(\xi_{n,0},\ldots,\xi_{n,n}),&m\ge n.
 \end{cases}
\]
Expanding \eqref{lw:eq:Wn} by \eqref{lw:eq:dm} therefore gives the
exact series
\begin{equation}\label{lw:eq:W-expansion}
 \mathfrak W_n
 =\mathcal E_3(q;q)_n\sum_{k\ge0}
 \frac{(-1)^kq^{3nk+3\binom k2}
 d_{n+k}(q^{4+3k})}
 {(q^3;q^3)_{n+k}}\,
 h_k(\xi_{n,0},\ldots,\xi_{n,n}).
\end{equation}
The smallest exponent occurring in a node is \(2-2n\).
Consequently the \(k\)-th summand of
\eqref{lw:eq:W-expansion} has order at least
\begin{equation}\label{lw:eq:k-order}
 3nk+3\binom k2+(2-2n)k
 =nk+\frac{3k^2+k}{2}.
\end{equation}
Thus the expansion is coefficientwise convergent and proves
\(\mathfrak W_n\in\mathbb C[[q]]\); in particular, the negative
Laurent powers in its finite Lagrange expression cancel.

Define
\begin{equation}\label{lw:eq:boundary-series}
 \mathcal R_0(q):=\mathcal R(q^4;q)
 =\sum_{s\ge0}\frac{q^{3s^2+s}}{(q^3;q^3)_s},
 \qquad
 \mathcal R_1(q):=\mathcal R(q;q)
 =\sum_{s\ge0}\frac{q^{3s^2-2s}}{(q^3;q^3)_s}.
\end{equation}

\begin{lemma}\label{lw:lem:boundary}
For every \(n\ge0\), the two sequences have the same boundary
expansion:
\begin{equation}\label{lw:eq:boundary}
\begin{aligned}
 \mathfrak N_n
 &=\mathcal R_0(q)+\frac{q^{n+1}}{1-q}\mathcal R_1(q)
   +O(q^{2n+2}),\\
 \mathfrak W_n
 &=\mathcal R_0(q)+\frac{q^{n+1}}{1-q}\mathcal R_1(q)
   +O(q^{2n+2}).
\end{aligned}
\end{equation}
\end{lemma}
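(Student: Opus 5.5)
For $\mathfrak N_n$ the plan is to split the defining double sum according to $r$. For $r=0$ the exponent is $3s^2+s$, which gives exactly $\mathcal R_0(q)$. For $r=1$ the exponent is $1+n+3s^2-2s$, and since $(q;q)_1=1-q$ this gives $\frac{q^{n+1}}{1-q}\mathcal R_1(q)$. For $r\ge2$ I will show $Q(r,s)+s\ge2$, so that together with $nr\ge2n$ every remaining term is $O(q^{2n+2})$. If $s=0$, then $Q=r^2\ge4$. If $s\ge1$, then $Q(r,s)=(r-\tfrac32 s)^2+\tfrac34 s^2$ is a positive integer, so $Q+s\ge2$. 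This proves the first line of \eqref{lw:eq:boundary}.

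For $\mathfrak W_n$ the plan is to use the exact expansion \eqref{lw:eq:W-expansion}. By \eqref{lw:eq:k-order}, the summands with $k\ge2$ have order at least $2n+7$, so only $k=0$ and $k=1$ matter.

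For $k=0$ the summand is $\frac{\mathcal E_3(q;q)_n}{(q^3;q^3)_n}\,d_n(q^4)$. By \eqref{lw:eq:dm-estimate}, $d_n(q^4)=\mathcal R_0+O(q^{3n+7})$. The prefactor equals $(q^{3n+3};q^3)_\infty/(q^{n+1};q)_\infty$. Here the first estimate \eqref{b:eq:28}-type bound $1+O(q^{n+1})$ is too weak, so I expand more carefully: modulo $q^{2n+2}$, only single factors of $1/(q^{n+1};q)_\infty$ contribute, and the numerator is $1+O(q^{3n+3})$. Hence the prefactor is $1+\frac{q^{n+1}}{1-q}+O(q^{2n+2})$. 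The $k=0$ summand therefore contributes $\mathcal R_0+\frac{q^{n+1}}{1-q}\mathcal R_0+O(q^{2n+2})$.

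For $k=1$ the summand is $-\mathcal E_3(q;q)_n\,q^{3n}\,d_{n+1}(q^7)\,h_1/(q^3;q^3)_{n+1}$, where $h_1=\sum_j\xi_{n,j}$. This already has order at least $n+2$, so modulo $q^{2n+2}$ the prefactor $\mathcal E_3(q;q)_n/(q^3;q^3)_{n+1}$ may be replaced by $1$. Also $d_{n+1}(q^7)$ may be replaced by $\mathcal R(q^7;q)$, using Lemma~\ref{lw:lem:coefficient}. Next,
\[
q^{3n}h_1=\sum_{j=0}^n\bigl(\omega q^{n+2+j}+\omega^2q^{2n+2-j}\bigr).
\]
Both strings cover the exponents $n+2,\ldots,2n+2$, and $\omega+\omega^2=-1$, so
\[
q^{3n}h_1=-\frac{q^{n+2}}{1-q}+O(q^{2n+2}).
\]
The $k=1$ summand is therefore $\frac{q^{n+2}}{1-q}\mathcal R(q^7;q)+O(q^{2n+2})$.

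To finish, I will use the elementary contiguous relation $\mathcal R(\gamma)=\mathcal R(\gamma q^3)+\gamma\,\mathcal R(\gamma q^6)$, obtained by splitting $1/(q^3;q^3)_j$. At $\gamma=q$ it gives $\mathcal R_1=\mathcal R_0+q\,\mathcal R(q^7;q)$. Adding the $k=0$ and $k=1$ contributions then gives $\mathcal R_0+\frac{q^{n+1}}{1-q}\mathcal R_1+O(q^{2n+2})$, which is the second line of \eqref{lw:eq:boundary}.

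The main obstacle is the bookkeeping in the $k=1$ term. One must check that the complex contributions $\omega,\omega^2$ combine exactly, and that the omitted endpoint exponent $2n+2$ and all other neglected factors are genuinely $O(q^{2n+2})$. It also has to be confirmed that \eqref{lw:eq:dm-estimate} applies at $m=n+1$ with $s=7$, which it does since $3(m+1)+s\ge 2n+2$.
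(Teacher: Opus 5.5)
Your proposal is correct and follows the paper's proof essentially step by step. For $\mathfrak N_n$ you use the same split into $r=0$, $r=1$ and $r\ge2$. For $\mathfrak W_n$ you use the same ingredients: truncation of \eqref{lw:eq:W-expansion} at $k\le1$, the two-term expansion $\mathcal E_3(q;q)_n/(q^3;q^3)_n=1+\frac{q^{n+1}}{1-q}+O(q^{2n+2})$, Lemma~\ref{lw:lem:coefficient}, and the contiguous relation \eqref{lw:eq:R-contiguous} at $\gamma=q$. The only differences are cosmetic: you get $Q(r,s)\ge1$ for $s\ge1$ from integrality and positive definiteness, and you simplify the $k=1$ term by order considerations rather than carrying the exact value $q^{3n}h_1=-q^{n+2}(1-q^{n+1})/(1-q)$, which in fact shows the error there is $O(q^{2n+3})$.
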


\begin{proof}
For \(\mathfrak N_n\), the terms \(r=0\) and \(r=1\) in
\eqref{lw:eq:Nn} give the two displayed terms exactly.  If \(r\ge2\),
then \(Q(r,s)+s\ge2\).  Indeed, for \(s=0\) the value is at least
four; for \(s=1\) it is \((r-1)(r-2)+2\); and for \(s\ge2\),
completion of the square gives
\[
 Q(r,s)+s\ge\frac34s^2+s\ge5.
\]
All remaining summands therefore have order at least \(2n+2\).

For \(\mathfrak W_n\), every term of
\eqref{lw:eq:W-expansion} with \(k\ge2\) has order at least
\(2n+7\), by \eqref{lw:eq:k-order}.  Put
\[
 \mathcal U_n:
 =\frac{\mathcal E_3(q;q)_n}{(q^3;q^3)_n}
 =\frac{(q^{3n+3};q^3)_\infty}{(q^{n+1};q)_\infty}.
\]
The sum of the nodes is
\[
 h_1(\xi_{n,0},\ldots,\xi_{n,n})
 =-q^{2-2n}\frac{1-q^{n+1}}{1-q}.
\]
Hence the first two terms of \eqref{lw:eq:W-expansion} are
\begin{equation}\label{lw:eq:W-two}
 \begin{aligned}
 \mathfrak W_n={}&\mathcal U_nd_n(q^4)+\mathcal U_n
 \frac{q^{n+2}(1-q^{n+1})}
 {(1-q)(1-q^{3n+3})}\,d_{n+1}(q^7)
 +O(q^{2n+7}).
 \end{aligned}
\end{equation}
The reciprocal product expansion gives
\[
 \mathcal U_n
 =1+\frac{q^{n+1}}{1-q}+O(q^{2n+2}).
\]
Using \eqref{lw:eq:dm-estimate} in \eqref{lw:eq:W-two} now gives
\[
 \mathfrak W_n
 =\mathcal R_0(q)+\frac{q^{n+1}}{1-q}
 \{\mathcal R(q^4;q)+q\mathcal R(q^7;q)\}
 +O(q^{2n+2}).
\]
Finally, direct coefficient comparison gives
\begin{equation}\label{lw:eq:R-contiguous}
 \mathcal R(\gamma;q)
 =\mathcal R(\gamma q^3;q)
 +\gamma\mathcal R(\gamma q^6;q).
\end{equation}
Taking \(\gamma=q\) identifies the expression in braces with
\(\mathcal R_1(q)\), proving \eqref{lw:eq:boundary}.
\end{proof}

\subsection{Boundary uniqueness and evaluation at order zero}

\begin{proof}[Proof of Theorem~\ref{lw:thm:family}]
Let \(e_n=\mathfrak W_n-\mathfrak N_n\).  Lemmas
\ref{lw:lem:Nrec}, \ref{lw:lem:Wrec}, and
\ref{lw:lem:boundary} give
\[
 qe_n-(1+q+q^{2n+3})e_{n+1}
 +(1-q^{n+2})e_{n+2}+q^{n+2}e_{n+3}=0,
 \qquad e_n=O(q^{2n+2}).
\]
The product recurrence, initially obtained for \(0<q<1\), is also a
formal identity by the removable-singularity argument following
\eqref{lw:eq:k-order}. Since the two sides agree for real \(0<q<1\) and extend
analytically to \(q=0\), the identity theorem gives the
corresponding formal identity. Its companion form is
\[
 \begin{pmatrix}e_n\\e_{n+1}\\e_{n+2}\end{pmatrix}
 =  \overline{M}_n
 \begin{pmatrix}e_{n+1}\\e_{n+2}\\e_{n+3}\end{pmatrix},
 \quad
 \overline{M}_n=
 \begin{pmatrix}
 q^{-1}(1+q+q^{2n+3})&-q^{-1}(1-q^{n+2})&-q^{n+1}\\
 1&0&0\\
 0&1&0
 \end{pmatrix}.
\]
Every entry of \( \overline{M}_n\) belongs to \(q^{-1}\mathbb C[[q]]\).
Fix \(n\) and iterate to any \(N>n\).  The product of \(N-n\)
matrices lowers the least exponent by at most \(N-n\), whereas the
vector at level \(N\) has order at least \(2N+2\).  Therefore
\[
 \operatorname{ord}_q(e_n)
 \ge2N+2-(N-n)=N+n+2.
\]
Letting \(N\) tend to infinity proves \(e_n=0\) coefficientwise.
Analyticity on \(|q|<1\) then proves \eqref{lw:eq:family} throughout
the disk.
\end{proof}

\begin{proof}[Proof of \eqref{eq:li-wang-conjecture}]
At \(n=0\), one has
\(\xi_{0,0}=-q^2\) and \(c_0^\vee=q^4\).  Theorem
\ref{lw:thm:family} gives
\[
 \mathfrak N_0
 =\mathcal E_3\Theta_{q^4}(-q^2)
 =\mathcal E_3(\omega q^2,\omega^2q^2;q^3)_\infty.
\]
Applying
\((1-v)(1-\omega v)(1-\omega^2v)=1-v^3\) factorwise yields
\[
 (\omega q^2,\omega^2q^2;q^3)_\infty
 =\frac{(q^6;q^9)_\infty}{(q^2;q^3)_\infty}.
\]
Together with
\(\mathcal E_3=1/(q,q^2;q^3)_\infty\), this proves
\eqref{lw:eq:target}.  Finally,
\[
 (q;q^3)_\infty(q^2;q^3)_\infty^2
 =(q,q^2,q^2,q^4,q^5,q^5;q^6)_\infty,
\]
so the result has exactly the form conjectured by Li and Wang.
\end{proof}

\section{A dual companion of the fifth Kanade--Russell identity}\label{sec:fifth-dual}

We now prove Theorem~\ref{thm:fifth-dual-companion}. The argument is
parallel to the proof of the Li--Wang identity in
Section~\ref{sec:li-wang}, but it uses the $\tau=2$ certificate from
Section~\ref{sec:cyclotomic-certificates}. Thus both cyclotomic
certificates used for the fourth and fifth Kanade--Russell identities
admit direct-product transfers.

\subsection{A parameterized dual family and its recurrence}

For $n\geq0$, define
\begin{equation}\label{d5:eq:Mfamily}
 \mathcal M_n(q):=\mathfrak N(n,2)+q^2\mathfrak N(n-2,5).
\end{equation}
Although $\mathfrak N(-2,5)$ itself has lowest $q$-degree -1, the
combination in \eqref{d5:eq:Mfamily} is a formal power series for every
$n\geq0$. The case $n=0$ is the left-hand side of
Theorem~\ref{thm:fifth-dual-companion}.

Define the recurrence coefficients, as polynomials in an independent
variable $x$, by
\begin{equation}\label{d5:eq:coefficients}
\begin{aligned}
 \varrho_0(x)&:=q(1+q^3x+q^5x^2),\\
 \varrho_1(x)&:=-\bigl(1+q+q^2x+q^4x
 +(q^3+q^5+q^6)x^2+q^7x^3+q^8x^4\bigr),\\
 \varrho_2(x)&:=1-q^6x^3,\\
 \varrho_3(x)&:=q^2x(1+q^2x+q^3x^2).
\end{aligned}
\end{equation}

\begin{lemma}\label{d5:lem:Mrec}
For every $n\geq0$,
\begin{equation}\label{d5:eq:mainrec}
 \sum_{j=0}^3\varrho_j(q^n)\mathcal M_{n+j}(q)=0.
\end{equation}
\end{lemma}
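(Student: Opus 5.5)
The plan is to mirror the proof of Lemma~\ref{lw:lem:Nrec} and the row-vector argument of Lemma~\ref{b:lem:sum}, using only the dual contiguous relations \eqref{eq:dual-contiguous-a} and \eqref{eq:dual-contiguous-b}. The strategy has two stages: first obtain a third-order recurrence for a single-column family, then express $\mathcal M_n$ in terms of that family.

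\emph{Step 1 (base recurrence).} Fix $b$ and put $F_a=\mathfrak N(a,b)$. Exactly as in the proof of Lemma~\ref{lw:lem:Nrec}, the two contiguous relations give
\[
 qF_n-(1+q+q^{2n+b+2})F_{n+1}+(1-q^{n+2})F_{n+2}+q^{n+2}F_{n+3}=0 .
\]
I will take $b=2$, so that $G_n:=\mathfrak N(n,2)$ satisfies this recurrence with $b=2$.

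\emph{Step 2 (rewrite $\mathcal M_n$).} Next I express $q^2\mathfrak N(n-2,5)$ through the $G$'s. Relation \eqref{eq:dual-contiguous-a} at $(a,b)=(n-2,5)$ gives
\[
 \mathfrak N(n-2,5)-\mathfrak N(n-1,5)=q^{n-1}\mathfrak N(n,2),
\]
which mixes the columns $b=5$ and $b=2$. The cleaner route is \eqref{eq:dual-contiguous-b} at $(a,b)=(n+1,2)$, which reads
\[
 G_{n+1}-\mathfrak N(n+1,5)=q^{5}\mathfrak N(n-2,8).
\]
This is not immediately what is needed. Instead I plan to use the formula $\mathfrak N(a,b-3)=q^{1-a}(F_{a-2}-F_{a-1})$ from the proof of Lemma~\ref{lw:lem:Nrec} with $b=5$, which expresses the $b=2$ column through the $b=5$ column. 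I will therefore work with $F_a:=\mathfrak N(a,5)$ as the base family, which satisfies the Step~1 recurrence with $b=5$. In that family,
\[
 \mathfrak N(n,2)=q^{-1-n}(F_{n}-F_{n+1})\cdot q^{\,\cdot}
\]
after index adjustment. (Precisely, with $a=n+2$ one gets $\mathfrak N(n+2,2)=q^{-1-n}(F_n-F_{n+1})$, and shifting indices back gives $\mathfrak N(n,2)$.) Together with $q^2\mathfrak N(n-2,5)=q^2F_{n-2}$, this lets me write
\[
 \mathcal M_n=\mathsf b(q^n)\,(F_{n-2},F_{n-1},F_n)^T
\]
for an explicit row $\mathsf b(x)$ whose entries are Laurent monomials and binomials in $q$ and $x=q^n$. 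This is the analogue of the row $\mathsf a(x)$ in Lemma~\ref{b:lem:sum}.

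\emph{Step 3 (certificate).} Let $M(x)$ be the companion matrix of the $b=5$ recurrence; its leading coefficient $q$ is a unit up to a power of $q$, so working over Laurent polynomials is harmless. Form the rows
\[
 v_0=\mathsf b(x)M(x)M(qx)M(q^2x),\quad v_1=\mathsf b(qx)M(qx)M(q^2x),\quad v_2=\mathsf b(q^2x)M(q^2x),\quad v_3=\mathsf b(q^3x).
\]
I then verify the polynomial row identity $\sum_j\varrho_j(x)v_j=0$ by direct exact multiplication, as was done in Lemma~\ref{b:lem:sum}. Applying this identity to $(F_{n+1},F_{n+2},F_{n+3})^T$ yields \eqref{d5:eq:mainrec}.

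\emph{Main obstacle.} I expect the hardest step to be this exact verification of the row identity. It is a finite computation, but it is bulky, and it is sensitive to getting the index shifts in $\mathsf b$ exactly right. As a check, I will note that four vectors in a three-dimensional space are always linearly dependent, so some relation must exist. The content of the verification is that the particular coefficients $\varrho_j$ are correct. This can be confirmed by solving the $3\times 4$ linear system for the kernel and comparing the result with the stated $\varrho_j$ up to normalization.
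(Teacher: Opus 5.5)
Your proposal matches the paper's proof. The paper takes $\mathcal B_n^{(5)}=\mathfrak N(n-2,5)$ (your $F_{n-2}$ with $F_a=\mathfrak N(a,5)$), writes $\mathcal M_n=q^{1-n}\bigl((1+q^{n+1})\mathcal B_n^{(5)}-\mathcal B_{n+1}^{(5)}\bigr)$, and checks a row identity built from companion matrices, exactly as you plan; your row $\mathsf b$ absorbs the prefactor $q^{1-n}$, which is why your identity needs no $q^{-j}$ weights. One bookkeeping point: because $\mathsf b(q^n)$ acts on $(F_{n-2},F_{n-1},F_n)$, your $M(x)$ must be the $b=5$ companion matrix re-indexed by two, i.e.\ the paper's $\mathsf T_n$, whose first row contains $-q^{-1}(1-q^n)$ and $-q^{n-1}$.
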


\begin{proof}
Put
\[
 \mathcal B_n^{(5)}:=\mathfrak N(n-2,5).
\]
The fixed-$b$ recurrence derived in the proof of
Lemma~\ref{lw:lem:Nrec}, with $b=5$ and the index shifted by two,
gives
\begin{equation}\label{d5:eq:Brec}
 q\mathcal B_n^{(5)}-(1+q+q^{2n+3})\mathcal B_{n+1}^{(5)}
 +(1-q^n)\mathcal B_{n+2}^{(5)}+q^n\mathcal B_{n+3}^{(5)}=0.
\end{equation}
Moreover, \eqref{eq:dual-contiguous-a} with $(a,b)=(n-2,5)$ gives
\[
 \mathcal B_n^{(5)}-\mathcal B_{n+1}^{(5)}
 =q^{n-1}\mathfrak N(n,2),
\]
and hence
\begin{equation}\label{d5:eq:MfromB}
 \mathcal M_n
 =q^{1-n}\bigl((1+q^{n+1})\mathcal B_n^{(5)}
               -\mathcal B_{n+1}^{(5)}\bigr).
\end{equation}

For $n\geq0$, set
\[
 \mathsf T_n:=
 \begin{pmatrix}
 q^{-1}(1+q+q^{2n+3})&-q^{-1}(1-q^n)&-q^{n-1}\\
 1&0&0\\
 0&1&0
 \end{pmatrix},
 \qquad
 \mathbf p_n:=(1+q^{n+1},-1,0).
\]
Then \eqref{d5:eq:Brec} is equivalent to
\[
 \begin{pmatrix}
 \mathcal B_n^{(5)}\\
 \mathcal B_{n+1}^{(5)}\\
 \mathcal B_{n+2}^{(5)}
 \end{pmatrix}
 =\mathsf T_n
 \begin{pmatrix}
 \mathcal B_{n+1}^{(5)}\\
 \mathcal B_{n+2}^{(5)}\\
 \mathcal B_{n+3}^{(5)}
 \end{pmatrix}.
\]
Define the four row vectors
\[
\begin{aligned}
 \mathbf p_{n,0}&:=\mathbf p_n\mathsf T_n\mathsf T_{n+1}\mathsf T_{n+2},\\
 \mathbf p_{n,1}&:=\mathbf p_{n+1}\mathsf T_{n+1}\mathsf T_{n+2},\\
 \mathbf p_{n,2}&:=\mathbf p_{n+2}\mathsf T_{n+2},\\
 \mathbf p_{n,3}&:=\mathbf p_{n+3}.
\end{aligned}
\]
A direct multiplication gives the exact row identity
\begin{equation}\label{d5:eq:row-certificate}
 \varrho_0(q^n)\mathbf p_{n,0}
 +q^{-1}\varrho_1(q^n)\mathbf p_{n,1}
 +q^{-2}\varrho_2(q^n)\mathbf p_{n,2}
 +q^{-3}\varrho_3(q^n)\mathbf p_{n,3}=0.
\end{equation}
By \eqref{d5:eq:MfromB}, for $0\leq j\leq3$,
\[
 \mathcal M_{n+j}
 =q^{1-n}q^{-j}\mathbf p_{n,j}
 \begin{pmatrix}
 \mathcal B_{n+3}^{(5)}\\
 \mathcal B_{n+4}^{(5)}\\
 \mathcal B_{n+5}^{(5)}
 \end{pmatrix}.
\]
Multiplying \eqref{d5:eq:row-certificate} by the displayed column
vector proves \eqref{d5:eq:mainrec}.
\end{proof}

\subsection{Transfer of the $\tau=2$ certificate}

We retain the notation $h_e^{(d)}(t;u)$,
$H_{r,s}^{(d)}(t;u)$, and $K_j(x;u)$ from
Section~\ref{sec:li-wang}. For each of the remaining $\tau=2$ objects of
Section~\ref{sec:cyclotomic-certificates}, appending the argument $u$
means that its algebraic base $q$ is renamed as the independent
indeterminate $u$. Thus
\[
 V_{j,2}(t;u),\quad \Delta_2(t;u),\quad B_2(t;u),\quad
 C_2(t;u),\quad L_2(t;u),\quad \rho_2(t;u,x),
\]
and $\lambda_j^{(2)}(x;u)$ are exactly the corresponding objects of
Section~\ref{sec:cyclotomic-certificates} after $q\mapsto u$. Likewise,
\[
 \mathcal P_2(t;u,x):=\left.\mathcal P_2(t)\right|_{q=u}.
\]
With this notation, the $\tau=2$ specialization of
\eqref{b:eq:22} is
\begin{equation}\label{d5:eq:certificate}
 L_2(t;u)=B_2(t;u)\mathcal P_2(t;u,x)
 +\frac{C_2(t;u)\mathcal P_2(u^3t;u,x)}{u^{21}x^3}.
\end{equation}
Thus no new polynomial certificate is required.

For $n\geq0$, define
\begin{equation}\label{d5:eq:Fkernel}
 \mathcal F_n^{(2)}(t;u):=
 \frac{(u^4/t;u^3)_n}
 {t^{n+1}(\omega u^{n+2}/t,\omega^2u^{n+2}/t;u)_{n+1}},
 \qquad
 J_n^{(2)}(t;u):=1-\frac{u^{3n+4}}{t^2},
\end{equation}
and put
\[
 \eta_2(t;u):=u^3\frac{t(t-1)}{t-u}.
\]
At $x=u^n$, the same finite-product cancellation as in
\eqref{lw:eq:kernel-shifts} gives
\begin{equation}\label{d5:eq:kernel-shifts}
 \frac{\mathcal F_{n+1}^{(2)}(t;u)}{\mathcal F_n^{(2)}(t;u)}
 =\frac{(t-u^4x^3)h_2^{(1)}(t;u)}
 {h_3^{(2)}(t;u)h_4^{(2)}(t;u)},
 \qquad
 \eta_2(t;u)
 \frac{\mathcal F_n^{(2)}(u^3t;u)}{\mathcal F_n^{(2)}(t;u)}
 =\rho_2(t;u,x).
\end{equation}

Put
\[
 R_2(t;u,x):=
 \frac{B_2(t;u)\mathcal P_2(t;u,x)}{\Delta_2(t;u)}.
\]
The scaling identity
$h_e^{(d)}(u^3t;u)=u^6h_{e-3}^{(d)}(t;u)$ gives
\[
 \rho_2(t;u,x)R_2(u^3t;u,x)
 =\frac{C_2(t;u)\mathcal P_2(u^3t;u,x)}
 {u^{21}x^3\Delta_2(t;u)}.
\]
Iteration of the first identity in \eqref{d5:eq:kernel-shifts} gives
\[
 \frac{\mathcal F_{n+j}^{(2)}(t;u)}{\mathcal F_n^{(2)}(t;u)}
 =\frac{V_{j,2}(t;u)H_{2,j+1}^{(1)}(t;u)}
 {H_{3,2j+2}^{(2)}(t;u)},
 \qquad 0\leq j\leq3.
\]
Together with
\[
 J_{n+j}^{(2)}(t;u)=\frac{t^2-u^{4+3j}x^3}{t^2},
\]
this gives, exactly as in \eqref{eq:L-from-F},
\[
 \Delta_2(t;u)
 \sum_{j=0}^3\lambda_j^{(2)}(x;u)K_j(x;u)
 J_{n+j}^{(2)}(t;u)
 \frac{\mathcal F_{n+j}^{(2)}(t;u)}{\mathcal F_n^{(2)}(t;u)}
 =L_2(t;u).
\]
Consequently, \eqref{d5:eq:certificate} and
\eqref{d5:eq:kernel-shifts} yield the rational telescoping identity
\begin{equation}\label{d5:eq:telescoping}
\begin{aligned}
 &\sum_{j=0}^3\lambda_j^{(2)}(x;u)K_j(x;u)
 J_{n+j}^{(2)}(t;u)\mathcal F_{n+j}^{(2)}(t;u)\\
 &\qquad=\mathcal F_n^{(2)}(t;u)R_2(t;u,x)
 +\eta_2(t;u)\mathcal F_n^{(2)}(u^3t;u)R_2(u^3t;u,x),
 \qquad x=u^n.
\end{aligned}
\end{equation}

\subsection{A convergent residue weight and the product family}

Initially take $0<q<1$ and specialize $u=q^{-1}$ only in the finite
identity \eqref{d5:eq:telescoping}. Define
\begin{equation}\label{d5:eq:weight}
 \Psi_{\rm D5}(t):=(q^3t,q^{-1}/t;q^3)_\infty.
\end{equation}
This is holomorphic on $\mathbb C\setminus \{0\}$, with convergent base $q^3$,
and
\begin{equation}\label{d5:eq:weight-shift}
 \frac{\Psi_{\rm D5}(u^3t)}{\Psi_{\rm D5}(t)}
 =\frac{1-t}{1-q^{-1}/t},
 \qquad
 \eta_2(t;u)
 =-u^3\frac{\Psi_{\rm D5}(u^3t)}{\Psi_{\rm D5}(t)}.
\end{equation}
On the same two rotated $q$-lattices used in Section~\ref{sec:li-wang},
put
\[
 \mathcal L_{\rm D5}(T):=\frac12
 \sum_{\epsilon=1}^2\sum_{m\in\mathbb Z}
 \operatorname{Res}_{t=\omega^\epsilon q^m}
 \bigl(\Psi_{\rm D5}(t)T(t)\bigr).
\]
Only finitely many rational poles contribute. Reindexing under
$t\mapsto u^3t=q^{-3}t$ gives
\begin{equation}\label{d5:eq:residue-cancel}
 \mathcal L_{\rm D5}
 \bigl(T(t)+\eta_2(t;u)T(u^3t)\bigr)=0.
\end{equation}

Set
\begin{equation}\label{d5:eq:Vfamily}
\begin{aligned}
 c_{n,2}^\vee&:=q^{2-3n},\\
 \xi_{n,j}^{(2)}&:=\omega q^{1-2n+j}+\omega^2q^{1-n-j}
 \qquad(0\leq j\leq n),\\
 \mathcal V_n(q)&:=\mathcal E_3(-1)^nq^{-3\binom n2}(q;q)_n
 \Theta_{c_{n,2}^\vee}
 [\xi_{n,0}^{(2)},\ldots,\xi_{n,n}^{(2)}].
\end{aligned}
\end{equation}
The nodes are distinct by the same factorization used after
\eqref{lw:eq:Wn}. Indeed, with $\alpha_j=\omega q^{1-2n+j}$,
\[
 \xi_{n,j}^{(2)}-\xi_{n,k}^{(2)}
 =(\alpha_j-\alpha_k)
 \left(1-\frac{c_{n,2}^\vee}{\alpha_j\alpha_k}\right),
 \qquad
 \frac{c_{n,2}^\vee}{\alpha_j\alpha_k}
 =\omega q^{n-j-k},
\]
so neither factor vanishes for $j\ne k$.

At $u=q^{-1}$,
\begin{equation}\label{d5:eq:weight-kernel}
\begin{aligned}
 \Psi_{\rm D5}(t)(u^4/t;u^3)_n
 &=(q^3t,q^{-3n-1}/t;q^3)_\infty\\
 &=\Theta_{c_{n,2}^\vee}
 \left(q^3\left(t+\frac{u^{3n+4}}t\right)\right).
\end{aligned}
\end{equation}
Set $\zeta=t+u^{3n+4}/t$. The same local change of variable as in
\eqref{lw:eq:Ldd}, now with the two preimage strings beginning at
$\omega u^{n+2}$ and $\omega^2u^{n+2}$, gives
\begin{equation}\label{d5:eq:Ldd}
 \mathcal L_{\rm D5}
 \bigl(J_n^{(2)}\mathcal F_n^{(2)}\bigr)
 =q^{3n}\Theta_{c_{n,2}^\vee}
 [\xi_{n,0}^{(2)},\ldots,\xi_{n,n}^{(2)}].
\end{equation}

\begin{lemma}\label{d5:lem:Vrec}
The sequence $\mathcal V_n$ satisfies the recurrence
\eqref{d5:eq:mainrec}.
\end{lemma}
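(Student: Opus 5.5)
We follow the proof of Lemma~\ref{lw:lem:Wrec} step by step, replacing the $\tau=1$ data by the $\tau=2$ data. We first apply the residue cancellation \eqref{d5:eq:residue-cancel} to the rational telescoping identity \eqref{d5:eq:telescoping} at $u=q^{-1}$, $x=u^n$, taking $T=\mathcal F_n^{(2)}R_2$. The right-hand side is exactly of the form $T(t)+\eta_2(t;u)T(u^3t)$, so its $\mathcal L_{\rm D5}$-value vanishes. Since $T$ is rational, only finitely many poles on the two rotated lattices contribute, and Lemma~\ref{lem:finite-residue} applies with $d=3$ and base $u^3=q^{-3}$. The lattice $\{\omega^\epsilon q^m\}$ is invariant under this base, and $\Psi_{\rm D5}$ is holomorphic on $\mathbb C\setminus\{0\}$. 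Combining this with $K_j(u^n;u)=(u;u)_{n+j}/(u;u)_n$ shows that
\[
 \mathfrak Z_n^{(2)}:=\mathcal E_3^{-1}(u;u)_n\,
 \mathcal L_{\rm D5}\bigl(J_n^{(2)}\mathcal F_n^{(2)}\bigr)
\]
satisfies $\sum_{j=0}^3\lambda_j^{(2)}(u^n;u)\mathfrak Z_{n+j}^{(2)}=0$ with $u=q^{-1}$.

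Next we normalize. By \eqref{d5:eq:Ldd} and $(q^{-1};q^{-1})_n=(-1)^nq^{-n(n+1)/2}(q;q)_n$, and comparing with the definition \eqref{d5:eq:Vfamily} of $\mathcal V_n$, we get
\[
 \mathfrak Z_n^{(2)}=\mathcal E_3^{-2}q^{3n}q^{-n(n+1)/2}q^{3\binom n2}\mathcal V_n
 =\mathcal E_3^{-2}q^{n(n+1)}\mathcal V_n,
\]
the same normalization as \eqref{lw:eq:normalization}. Substituting this gives
\[
 \sum_{j}\lambda_j^{(2)}(q^{-n};q^{-1})\,q^{(n+j)(n+j+1)}\mathcal V_{n+j}=0 .
\]
We then divide by the common factor $q^{n(n+1)}$, which leaves the weights $q^{j(2n+j+1)}$.

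The remaining work, and the main obstacle, is bookkeeping: we must check that the resulting coefficients are proportional to $\varrho_j(q^n)$ in \eqref{d5:eq:coefficients} by a single factor that does not depend on $j$. Concretely, one computes $\lambda_j^{(2)}(q^{-n};q^{-1})q^{j(2n+j+1)}$ for $j=0,\dots,3$ using $\delta,\nu,\chi$. For example, for $j=0$ this gives $\delta(q^{-n})|_{q\to q^{-1}}=1+q^{-n-2}+q^{-2n-5}$. For $j=3$, $q^{13}x^5\chi(x)$ becomes $q^{-13-5n}(1+q^{-n-1}+q^{-2n-3})q^{6n+12}$.

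We would then multiply through by $q^{2n+6}$, or by whatever common power the computation dictates. Reversing each polynomial should reproduce
\[
 q(1+q^3x+q^5x^2),\qquad q^2x(1+q^2x+q^3x^2),
\]
and analogously $\varrho_1,\varrho_2$ from $\nu$ and $1-q^6x^3$, all at $x=q^n$. This is a finite check in Laurent monomials, and we would verify all four coefficients against one common multiplier; $\nu$ needs the most care. Finally, as in the paper's removable-singularity arguments, the recurrence is first obtained for real $0<q<1$. To pass to formal power series we need the analytic extension of $\mathcal V_n$ to $q=0$, via an expansion like \eqref{lw:eq:W-expansion}; the identity theorem then gives the recurrence as an identity of formal series.
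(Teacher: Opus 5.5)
Your proposal is correct and is essentially the paper's proof. You apply \eqref{d5:eq:residue-cancel} to \eqref{d5:eq:telescoping}, normalize via $\mathfrak Z_n^{(2)}=\mathcal E_3^{-2}q^{n(n+1)}\mathcal V_n$, substitute $u=q^{-1}$, and multiply by $q^{2n+6}$. Doing the deferred bookkeeping confirms that $q^{2n+6}$ is indeed the single common multiplier. It sends the $j=1$ coefficient $-q^{2n+2}\nu$ (with $q\mapsto q^{-1}$, $x=q^{-n}$) exactly to $\varrho_1(q^n)$, and the $j=2$ coefficient $-q^{n}+q^{-2n-6}$ to $\varrho_2(q^n)=1-q^{3n+6}$, consistent with your $j=0$ and $j=3$ checks.
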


\begin{proof}
Apply \eqref{d5:eq:residue-cancel} to
\eqref{d5:eq:telescoping}. Since
$K_j(u^n;u)=(u;u)_{n+j}/(u;u)_n$, the sequence
\[
 \mathfrak Z_n^{(2)}:=\mathcal E_3^{-1}(u;u)_n
 \mathcal L_{\rm D5}
 \bigl(J_n^{(2)}\mathcal F_n^{(2)}\bigr)
\]
satisfies
\[
 \sum_{j=0}^3\lambda_j^{(2)}(u^n;u)
 \mathfrak Z_{n+j}^{(2)}=0.
\]
By \eqref{d5:eq:Ldd} and the standard inversion formula for the finite
$q$-factorial used in Section~\ref{sec:li-wang},
\begin{equation}\label{d5:eq:normalization}
 \mathfrak Z_n^{(2)}
 =\mathcal E_3^{-2}q^{n(n+1)}\mathcal V_n.
\end{equation}
Substituting \eqref{d5:eq:normalization} and $u=q^{-1}$ into the
recurrence above and multiplying by $q^{2n+6}$ gives exactly
\eqref{d5:eq:mainrec}, with the coefficients in
\eqref{d5:eq:coefficients}. This proves the lemma.
\end{proof}

\subsection{Boundary matching and uniqueness}

We use the coefficient functions $d_m(\gamma)$ and
$\mathcal R(\gamma;q)$ introduced in \eqref{lw:eq:dm}. In particular,
Lemma~\ref{lw:lem:coefficient} and \eqref{lw:eq:R-contiguous} are
available without change.

The next boundary statement is the $\tau=2$ analogue of
Lemma~\ref{lw:lem:boundary}.  The two lemmas have the same structural
form, but the $\tau=2$ transfer leads to different specializations of
the same function $\mathcal R(\gamma;q)$.

\begin{lemma}\label{d5:lem:boundary}
For every $n\geq0$, the two sequences have the same boundary
expansion:
\begin{equation}\label{d5:eq:boundary}
\begin{aligned}
 \mathcal M_n
 &=\mathcal R(q^2;q)
 +\frac{q^{n+1}}{1-q}
  \bigl(\mathcal R(q^2;q)+\mathcal R(q^5;q)\bigr)
 +O(q^{2n+2}),\\
 \mathcal V_n
 &=\mathcal R(q^2;q)
 +\frac{q^{n+1}}{1-q}
  \bigl(\mathcal R(q^2;q)+\mathcal R(q^5;q)\bigr)
 +O(q^{2n+2}).
\end{aligned}
\end{equation}
\end{lemma}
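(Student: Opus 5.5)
The plan is to follow the proof of Lemma~\ref{lw:lem:boundary} closely and treat the two sequences separately. In each case only the two lowest strata contribute below order $2n+2$: the terms $r\in\{0,1\}$ on the sum side, and the terms $k\in\{0,1\}$ of the complete-homogeneous expansion on the product side. Throughout, $\mathcal R(\gamma;q)=\sum_{j\ge0}\gamma^jq^{3j(j-1)}/(q^3;q^3)_j$, so that $q^{3s^2+\beta s}$ over $(q^3;q^3)_s$ sums to $\mathcal R(q^{\beta+3};q)$.

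\emph{The sum side $\mathcal M_n$.} I will split both $\mathfrak N(n,2)$ and $q^2\mathfrak N(n-2,5)$ according to $r$.
\begin{itemize}
\item For $r=0$, the two contributions are $\mathcal R(q^5;q)$ and $q^2\mathcal R(q^8;q)$. By \eqref{lw:eq:R-contiguous} with $\gamma=q^2$, their sum is exactly $\mathcal R(q^2;q)$.
\item For $r=1$, the exponents are $1+n+3s^2-s$ and $2+1+(n-2)-3s+3s^2+5s=n+1+3s^2+2s$. These give $\frac{q^{n+1}}{1-q}\mathcal R(q^2;q)$ and $\frac{q^{n+1}}{1-q}\mathcal R(q^5;q)$, respectively.
\item For $r\ge2$, write the exponents as $rn+(Q(r,s)+2s)$ and $rn+(Q(r,s)-2r+5s+2)$. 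Since $rn\ge 2n$, it suffices to check $Q(r,s)+2s\ge2$ and $Q(r,s)-2r+5s\ge0$ for $r\ge2$, $s\ge0$.
\end{itemize}
The inequalities in the last case follow from the case check $s=0$, $s=1$ and completion of the square $Q=(r-\frac32 s)^2+\frac34 s^2$ for $s\ge2$. For example, at $s=0$ we get $r^2-2r\ge0$; at $s=1$ we get $(r-\tfrac52)^2-\tfrac14+3\ge0$; and for $s\ge2$ the term $\tfrac34s^2+5s$ dominates $2r$ after minimizing over $r$.

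\emph{The product side $\mathcal V_n$.} Exactly as for \eqref{lw:eq:W-expansion}, I will expand $\Theta_{c^\vee_{n,2}}(z)=\sum_m t_m(c^\vee_{n,2})z^m$ and use the monomial divided-difference formula. With $m=n+k$, we have $c^\vee_{n,2}=q^{2+3k}q^{-3(n+k)}$, so
\[
\mathcal V_n=\mathcal E_3(q;q)_n\sum_{k\ge0}\frac{(-1)^kq^{3nk+3\binom k2}d_{n+k}(q^{2+3k})}{(q^3;q^3)_{n+k}}\,h_k(\xi^{(2)}_{n,0},\ldots,\xi^{(2)}_{n,n}).
\]
The smallest node exponent is now $1-2n$. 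Hence the $k$-th term has order at least $nk+\frac{3k^2-k}{2}$, which is at least $2n+5$ for $k\ge2$; this also shows that $\mathcal V_n\in\mathbb C[[q]]$.

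Next, summing the nodes with $\omega+\omega^2=-1$ gives
\[
h_1=-q^{1-2n}\frac{1-q^{n+1}}{1-q}.
\]
With $\mathcal U_n=\mathcal E_3(q;q)_n/(q^3;q^3)_n=1+\frac{q^{n+1}}{1-q}+O(q^{2n+2})$, this yields
\[
\mathcal V_n=\mathcal U_n d_n(q^2)+\mathcal U_n\frac{q^{n+1}(1-q^{n+1})}{(1-q)(1-q^{3n+3})}d_{n+1}(q^5)+O(q^{2n+5}).
\]
Finally, \eqref{lw:eq:dm-estimate} gives $d_n(q^2)=\mathcal R(q^2;q)+O(q^{3n+5})$ and $d_{n+1}(q^5)=\mathcal R(q^5;q)+O(q^{3n+11})$. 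Inserting these produces precisely the displayed expansion. Unlike Lemma~\ref{lw:lem:boundary}, the product side already splits into $\mathcal R(q^2)+\mathcal R(q^5)$ without a further contiguous step; here the contiguous relation \eqref{lw:eq:R-contiguous} is needed only on the sum side.

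\emph{Main obstacle.} The step most prone to error is the bookkeeping in the product expansion. Signs and powers must be traced correctly: the factor $(-1)^nq^{-3\binom n2}$ in \eqref{d5:eq:Vfamily} must combine with the normalization of $d_{n+k}$ to give the stated prefactor, the sign of $h_1$ must cancel the $(-1)^k$ at $k=1$, and the argument of $d_{n+k}$ must come out as $q^{2+3k}$. I would first re-derive the $k=0,1$ terms explicitly from \eqref{lw:eq:dm} before invoking the estimates. The elementary inequalities for $r\ge2$ on the sum side are routine by comparison.
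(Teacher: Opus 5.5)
Your proposal is correct and follows the paper's proof essentially step for step. On the sum side you split at $r\in\{0,1\}$ and apply \eqref{lw:eq:R-contiguous} at $\gamma=q^2$; on the product side you truncate \eqref{d5:eq:V-expansion} at $k\in\{0,1\}$, use $h_1$ and $\mathcal U_n$, and invoke Lemma~\ref{lw:lem:coefficient}. The only blemish is a harmless arithmetic slip at $s=1$: there $r^2-5r+8=(r-\frac52)^2+\frac74$, not $(r-\frac52)^2-\frac14+3$, and the needed nonnegativity is unaffected.
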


\begin{proof}
For $\mathfrak N(n,2)$, separating the terms $r=0,1$ gives
\[
 \mathfrak N(n,2)
 =\mathcal R(q^5;q)
 +\frac{q^{n+1}}{1-q}\mathcal R(q^2;q)
 +O(q^{2n+3}).
\]
Indeed, for $r\geq2$ one has $Q(r,s)+2s\geq3$. Similarly,
\[
 q^2\mathfrak N(n-2,5)
 =q^2\mathcal R(q^8;q)
 +\frac{q^{n+1}}{1-q}\mathcal R(q^5;q)
 +O(q^{2n+2}),
\]
because $Q(r,s)-2r+5s+2\geq2$ for $r\geq2$.
Taking $\gamma=q^2$ in \eqref{lw:eq:R-contiguous} gives
\[
 \mathcal R(q^2;q)=\mathcal R(q^5;q)+q^2\mathcal R(q^8;q),
\]
which proves \eqref{d5:eq:boundary} for $\mathcal M_n$.

For $\mathcal V_n$, the monomial divided-difference formula and
\eqref{lw:eq:dm} give
\begin{equation}\label{d5:eq:V-expansion}
 \mathcal V_n
 =\mathcal E_3(q;q)_n\sum_{k\geq0}
 \frac{(-1)^kq^{3nk+3\binom k2}
 d_{n+k}(q^{2+3k})}
 {(q^3;q^3)_{n+k}}
 h_k(\xi_{n,0}^{(2)},\ldots,\xi_{n,n}^{(2)}).
\end{equation}
The smallest exponent occurring in a node is $1-2n$. Hence the
$k$-th summand has order at least
\[
 3nk+3\binom k2+(1-2n)k
 =nk+\frac{3k^2-k}{2}.
\]
In particular, all terms with $k\geq2$ have order at least $2n+5$.
Furthermore,
\[
 h_1(\xi_{n,0}^{(2)},\ldots,\xi_{n,n}^{(2)})
 =-q^{1-2n}\frac{1-q^{n+1}}{1-q}.
\]
Using the quantity $\mathcal U_n$ from the proof of
Lemma~\ref{lw:lem:boundary},
\[
 \mathcal U_n
 =1+\frac{q^{n+1}}{1-q}+O(q^{2n+2}),
\]
the first two terms of \eqref{d5:eq:V-expansion} give
\[
\begin{aligned}
 \mathcal V_n={}&\mathcal U_nd_n(q^2)
 +\mathcal U_n
 \frac{q^{n+1}(1-q^{n+1})}
 {(1-q)(1-q^{3n+3})}d_{n+1}(q^5)
 +O(q^{2n+5}).
\end{aligned}
\]
Lemma~\ref{lw:lem:coefficient} now yields
\[
 \mathcal V_n
 =\mathcal R(q^2;q)
 +\frac{q^{n+1}}{1-q}
 \bigl(\mathcal R(q^2;q)+\mathcal R(q^5;q)\bigr)
 +O(q^{2n+2}),
\]
as required.
\end{proof}

\begin{theorem}\label{d5:thm:family}
For every $n\geq0$ and $|q|<1$,
\begin{equation}\label{d5:eq:family}
 \mathcal M_n(q)=\mathcal V_n(q).
\end{equation}
\end{theorem}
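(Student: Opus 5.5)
The plan is to mirror the proof of Theorem~\ref{lw:thm:family}. Put $e_n:=\mathcal V_n-\mathcal M_n$. Lemma~\ref{d5:lem:Mrec} and Lemma~\ref{d5:lem:Vrec} say that both sequences satisfy \eqref{d5:eq:mainrec}, so $e_n$ does too:
\[
\sum_{j=0}^3\varrho_j(q^n)e_{n+j}=0.
\]
Lemma~\ref{d5:lem:boundary} gives $e_n=O(q^{2n+2})$ for every $n\ge0$.

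The first step is to make sure the recurrence for $\mathcal V_n$ holds as an identity in $\mathbb C[[q]]$, not merely for real $0<q<1$. Expansion \eqref{d5:eq:V-expansion} has $k$-th summand of order at least $nk+(3k^2-k)/2\ge0$. Hence it converges coefficientwise, and $\mathcal V_n\in\mathbb C[[q]]$; in particular the negative Laurent powers in the finite Lagrange expression cancel. So $\mathcal V_n$ is analytic near $q=0$, and the identity theorem transfers the residue-derived recurrence to a formal identity. The same applies to $\mathcal M_n$, which is a power series as noted after \eqref{d5:eq:Mfamily}.

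Next I would pass to companion form. The leading coefficient is $\varrho_0(q^n)=q(1+q^{n+3}+q^{2n+5})$, which is $q$ times a formal unit. Writing
\[
(e_n,e_{n+1},e_{n+2})^T=\overline{\mathsf C}_n\,(e_{n+1},e_{n+2},e_{n+3})^T,
\]
the first row of $\overline{\mathsf C}_n$ is $-\varrho_0(q^n)^{-1}(\varrho_1,\varrho_2,\varrho_3)(q^n)$, and its other rows are $(1,0,0)$ and $(0,1,0)$. All entries lie in $q^{-1}\mathbb C[[q]]$. Iterating from level $n$ to level $N$ lowers the order by at most $N-n$, so
\[
\operatorname{ord}_q e_n\ \ge\ 2N+2-(N-n)=N+n+2.
\]
Letting $N\to\infty$ gives $e_n=0$ coefficientwise. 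The net order gain of one per step, from the boundary order $2n+2$ against the single factor $q^{-1}$, is exactly the margin used in Section~\ref{sec:li-wang}.

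Finally, both sides are analytic on $0<|q|<1$: $\mathcal M_n$ by absolute convergence, and $\mathcal V_n$ by the Lagrange formula with distinct nodes. Both extend to $q=0$, so the formal identity becomes the analytic identity \eqref{d5:eq:family} on $|q|<1$.

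The main obstacle I anticipate is the bookkeeping in the first step. One has to confirm that $\mathfrak N(n-2,5)$, which for small $n$ is only a Laurent series, still combines with $q^2$ so that the recurrence of Lemma~\ref{d5:lem:Mrec} is an identity among power series. One also has to confirm that the $q^{-1}$ entries of $\overline{\mathsf C}_n$ are the worst possible, i.e.\ that no higher negative power of $q$ appears for small $n$. If the order margin were to fail at some small $n$, I would start the iteration at a larger base level and recover the finitely many initial cases by running the recurrence downward from there.
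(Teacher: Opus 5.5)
Your proposal is correct and follows the paper's proof essentially step for step. It uses the common recurrence \eqref{d5:eq:mainrec} and the boundary estimate $e_n=O(q^{2n+2})$ from Lemma~\ref{d5:lem:boundary}. It builds a companion matrix with entries in $q^{-1}\mathbb C[[q]]$, which holds because $\varrho_0(q^n)=q(1+q^{n+3}+q^{2n+5})$, and then uses the order count $2N+2-(N-n)$, with analytic continuation via \eqref{d5:eq:V-expansion}. The small-$n$ obstacle you anticipated does not arise: for every $n\ge0$, $\varrho_0(q^n)/q$ is a formal unit and $\varrho_1(q^n),\varrho_2(q^n),\varrho_3(q^n)$ are polynomials in $q$, so the $q^{-1}$ bound is uniform and no fallback argument is needed.
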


\begin{proof}
Let $e_n=\mathcal V_n-\mathcal M_n$. By
Lemmas~\ref{d5:lem:Mrec}, \ref{d5:lem:Vrec}, and
\ref{d5:lem:boundary},
\[
 \sum_{j=0}^3\varrho_j(q^n)e_{n+j}=0,
 \qquad e_n=O(q^{2n+2}).
\]
Since
\[
 \varrho_0(q^n)=q(1+q^{n+3}+q^{2n+5}),
\]
solving the recurrence for $e_n$ gives a companion matrix all of
whose entries belong to $q^{-1}\mathbb C[[q]]$. Fix $n$ and iterate
to level $N>n$. The product of $N-n$ companion matrices can lower the
$q$-order by at most $N-n$, whereas the terminal vector has order at
least $2N+2$. Therefore
\[
 \operatorname{ord}_q(e_n)\geq2N+2-(N-n)=N+n+2.
\]
Letting $N\to\infty$ gives $e_n=0$ coefficientwise. The direct-product
expansion \eqref{d5:eq:V-expansion} removes the apparent singularity at
$q=0$, and absolute convergence then extends the identity throughout
$|q|<1$.
\end{proof}

\subsection{Evaluation at $n=0$}

\begin{proof}[Proof of Theorem~\ref{thm:fifth-dual-companion}]
At $n=0$,
\[
 c_{0,2}^\vee=q^2,
 \qquad
 \xi_{0,0}^{(2)}=\omega q+\omega^2q=-q.
\]
Theorem~\ref{d5:thm:family} gives
\[
 \mathfrak N(0,2)+q^2\mathfrak N(-2,5)
 =\mathcal E_3\Theta_{q^2}(-q)
 =\mathcal E_3(\omega q,\omega^2q;q^3)_\infty.
\]
Using
$ (1-v)(1-\omega v)(1-\omega^2v)=1-v^3 $
factorwise yields
\[
 (\omega q,\omega^2q;q^3)_\infty
 =\frac{(q^3;q^9)_\infty}{(q;q^3)_\infty}.
\]
Since
$\mathcal E_3=1/((q;q^3)_\infty(q^2;q^3)_\infty)$,
we conclude that
\[
 \mathfrak N(0,2)+q^2\mathfrak N(-2,5)
 =\frac{(q^3;q^9)_\infty}
 {(q;q^3)_\infty^2(q^2;q^3)_\infty}.
\]
This is \eqref{eq:fifth-dual-companion}.
\end{proof}

 \section{Concluding remarks}\label{sec:conclusion}
 
 The five modulo nine Kanade--Russell identities, the first four
 individual dual identities, and a dual companion of the fifth identity
 are established by explicit recurrences and coefficientwise boundary
 uniqueness. For the five original identities, the two divided-difference
 constructions use the kernels $\Phi_{9,c}$ and $\Phi_{3,c}$,
 respectively. Finite local residue sums and algebraic certificates
 provide the recurrences, while a common product expansion provides the
 boundary estimates. The cyclotomic construction also proves the
 parameterized identities of Theorem~\ref{thm:cyclotomic}.
 
 For the first three individual dual identities, the bilinear relations of
 Theorem~\ref{ww:bilinear} connect the two quadratic forms directly.
 The two row-vector certificates and their cross product explain why
 the bilinear expressions satisfy the dual recurrence. The resulting
 product formulas settle Conjecture~3.6 of Wang and Wang. For the fourth
 dual identity, the finite algebraic certificate for $\tau=1$ transfers
 to a direct-product divided-difference family, and a two-term boundary
 expansion proves Li--Wang's Conjecture~6.4.  The parallel transfer of
 the already established $\tau=2$ certificate gives
 Theorem~\ref{thm:fifth-dual-companion}, the dual companion of the fifth
 Kanade--Russell identity.  Thus both cyclotomic certificates admit
 direct-product dual transfers.   

 In particular, the first Kanade--Russell identity and its first dual
 identity prove the two $q$-series evaluations singled out by Sun and
 Wang~\cite{sun-wang} for the Dynkin-diagram pairs $(T_1,G_2)$ and
 $(G_2,T_1)$.  Thus, relative to the low-rank status recorded in their
 work, the modularity of these two cases is now unconditional.

 \appendix
\section{Coefficient data for the cyclotomic certificates}
\label{app:coefficients}
 
 Write
 \[
 \mathcal P_\tau(t)=\sum_{k=0}^6\kappa_{\tau,k}t^k,
 \qquad \tau=1,2.
 \]
 For completeness, we record here the explicit coefficients used in
 Lemma~\ref{b:lem:certificates}.

\subsection{The first certificate}

\begin{align*}
\kappa_{1,6}={}&q^{3} x^{3},\\
\kappa_{1,5}={}&q^{7} x^{4}\bigl(q^{3} x^{2} + q^{3} x + 1\bigr),\\
\kappa_{1,4}={}&- q^{11} x^{5}\bigl(- q^{6} x^{2} - q^{5} x^{3} - q^{5} x^{2} + q^{4} x^{4}\\
&\qquad{}- q^{4} x^{2} - q^{3} x^{3} - q^{3} x^{2} - q^{2} x^{3} - q^{2} x^{2} + q x^{2}+ q x - 1\bigr)
,\\
\kappa_{1,3}={}&q^{16} x^{8}\bigl(q^{9} x^{3} + q^{6} x - q^{5} x^{3} + q^{5}\\
&\qquad{}+ q^{4} x^{2} + 2 q^{4} x + q^{4} + q^{3} x + q x - 1\bigr),
\\
\kappa_{1,2}={}&- q^{22} x^{8}\bigl(q^{8} x^{6} - q^{8} x^{4} - q^{7} x^{4} - q^{6} x^{4}\\
&\qquad{}- q^{5} x^{4} - q^{5} x^{3} - q^{4} x^{4} + q^{3} x^{2} - q^{3} x - q^{2} x
- q x + 1\bigr),\\
\kappa_{1,1}={}&- q^{27} x^{11}\bigl(q^{8} x^{4} + q^{7} x^{4} - q^{7} x^{3} - q^{7} x^{2} - q^{6} x^{3} - 2 q^{6} x^{2} - q^{5} x^{3} \\
&\qquad{}- q^{5} x^{2} + q^{5} x + q^{4} x - q^{3} x^{2} + q^{3} x - q^{3} + q^{2} x + q x - 1\bigr),\\
\kappa_{1,0}={}&- q^{35} x^{14}\bigl(- q^{4} x + q^{3} x^{2} - q^{2} x - q x + 1\bigr).
\end{align*}

\subsection{The second certificate}

\begin{align*}
\kappa_{2,6}={}&q^{3} x^{3}\bigl(q^{5} x^{2} + q^{2} x + 1\bigr),\\
\kappa_{2,5}={}&q^{8} x^{4}\bigl(q^{8} x^{3} + q^{7} x^{4} + q^{7} x^{3} + q^{5} x^{2} + q^{4} x^{2} + q^{3} x^{2} + q^{3} x + q^{2} x + 1\bigr),\\
\kappa_{2,4}={}&- q^{13} x^{5}\bigl(- q^{11} x^{4} - q^{10} x^{5} - q^{10} x^{4} + q^{9} x^{6} - q^{9} x^{4} - q^{8} x^{5} - q^{8} x^{4} \\
&\qquad{} - q^{8} x^{3} - q^{7} x^{4} - q^{7} x^{3} - q^{6} x^{4} - q^{6} x^{3} - q^{6} x^{2} - q^{5} x^{3} - 2 q^{5} x^{2}  \\
&\qquad{} - q^{4} x^{3} - q^{4} x^{2} - q^{3} x^{3} - q^{3} x^{2} + q^{2} x^{2} - 1\bigr),\\
\kappa_{2,3}={}&q^{20} x^{8}\bigl(q^{12} x^{5} + q^{11} x^{3} - q^{10} x^{5} + q^{10} x^{4} + q^{10} x^{3} + q^{9} x^{4} + q^{9} x^{3} \\
&\qquad{}+ q^{8} x^{3} + q^{7} x^{3} + 2 q^{7} x^{2} - q^{6} x^{4} + q^{6} x^{3} + 2 q^{6} x + 2 q^{5} x^{2}  \\
&\qquad{} + 2 q^{5} x + 2 q^{4} x^{2} + q^{4} + q^{3} x + q^{2} x^{2} + q^{2} x + q^{2} - 1\bigr),\\
\kappa_{2,2}={}&- q^{26} x^{8}\bigl(q^{13} x^{8} - q^{12} x^{6} - q^{12} x^{5} + q^{11} x^{7} - 2 q^{11} x^{6} - q^{10} x^{5} - q^{9} x^{6} \\
&\qquad{} - 3 q^{9} x^{5} - q^{9} x^{4} - q^{8} x^{5} - q^{7} x^{5} - q^{7} x^{4} - q^{7} x^{3} + q^{6} x^{5} - q^{6} x^{3}  \\
&\qquad{}- 2 q^{5} x^{4} - q^{4} x^{4} - q^{4} x^{3} - q^{4} x^{2} + q^{3} x^{2} - q^{3} x - q^{2} x^{2}- q^{2} x + 1\bigr),\\
\kappa_{2,1}={}&- q^{33} x^{11}\bigl(q^{12} x^{6} + q^{11} x^{6} - q^{11} x^{5} - q^{11} x^{4} - q^{10} x^{5} - q^{10} x^{4} + q^{9} x^{5}  \\
&\qquad{} - 2 q^{9} x^{4} + q^{9} x^{3} - q^{8} x^{4} - q^{8} x^{3} - q^{8} x^{2} - q^{7} x^{3} - q^{6} x^{4} - q^{6} x^{3}  \\
&\qquad{} - q^{6} x^{2} + q^{6} x - q^{5} x^{3} - q^{3} x^{2} + q^{3} x - q^{3} + q^{2} x^{2} + q^{2} x - 1\bigr),\\
\kappa_{2,0}={}&- q^{43} x^{14}\bigl(- q^{7} x^{3} + q^{6} x^{4} - q^{6} x^{3} + q^{5} x^{2} - q^{4} x^{2} - q^{3} x - q^{2} x^{2} - q^{2} x + 1\bigr).
\end{align*}

\vspace{10mm}
\noindent{\bf Acknowledgments.}
This work was supported by the National Natural Science Foundation
of China (Grant No.~12371334) and the Qinglan Project. 

\medskip
\noindent{\bf Competing Interests.}
The author declares that he has no conflict of interest.

\medskip
\noindent{\bf Code and data availability.}
The exact symbolic Maple code and the resulting verification
output are available at 
\begin{center}
	\url{https://github.com/Ernest-Xia/Kanade--Russell-identities-Lemma-4.3},\\
	\url{https://github.com/Ernest-Xia/Kanade--Russell-identities-Lemma-5.2},\\
	\url{https://github.com/Ernest-Xia/Kanade--Russell-identities-Lemma-6.3}.
\end{center}

\medskip
\textbf{Declaration of AI usage.}
During the development of this work, the author used ChatGPT
(OpenAI) as a research-assistance tool to explore possible proof
strategies and to assist with language editing and proofreading.
All mathematical arguments presented in the paper were independently
verified, refined, and written by the author, who takes full
responsibility for the final content of the publication.

\end{document}